\DeclarePairedDelimiter{\ceil}{\lceil}{\rceil}
\definecolor{red}{rgb}{1,0,0}
\def\red{\color{red}}
\definecolor{blue}{rgb}{0,0,.7}
\definecolor{green}{rgb}{0,.6,0}
\definecolor{purp}{rgb}{.5,0,.5}
\numberwithin{figure}{section}   
\newtheorem{thm}{Theorem}[section]
\newtheorem{cor}[thm]{Corollary}
\newtheorem{lem}[thm]{Lemma}
\newtheorem{prop}[thm]{Proposition}
\newtheorem{Model}{Model}
\newtheorem{definition}[thm]{Definition}
\newcommand{\bea}{\begin{eqnarray*}} 
\newcommand{\eea}{\end{eqnarray*}}
\theoremstyle{definition}
\theoremstyle{definition}
\newcommand{\thr}{\operatorname{th_{dim}}}
\newcommand{\thre}{\operatorname{th_{edim}}}
\newcommand{\thrm}{\operatorname{th_{mdim}}}
\newcommand{\thrx}{\operatorname{th_{xdim}}}
\newcommand{\dist}{\operatorname{dist}}
\newcommand{\ths}{\operatorname{th_s}}
\newcommand{\edim}{\operatorname{edim}}
\newcommand{\mdim}{\operatorname{mdim}}
\newcommand{\xdim}{\operatorname{xdim}}
\newcommand{\D}{\mathcal{D}}
\def\Circ#1#2{\operatorname{Circ}_{#1}\!\left(#2\right)}
\newcommand{\paren}[1]{\left(#1\right)} 
\newcommand{\cl}[1]{\left\lceil#1\right\rceil} 
\title{Throttling for metric dimension and its variants}
\author{Boris Brimkov\thanks{Department of Mathematics, Statistics, and Physics, Slippery Rock University (boris.brimkov@sru.edu)} \\\and
Peter Diao\thanks{pdiao@alumni.princeton.edu} \\\and
  Jesse Geneson\thanks{Department of Mathematics and Statistics, San José State University (geneson@gmail.com)} \\\and
  Carolyn Reinhart\thanks{Department of Mathematics and Statistics, Swarthmore College (creinha1@swarthmore.edu)} \\\and
  Shen-Fu Tsai\thanks{Department of Mathematics, National Central University (parity@gmail.com)} \\\and
  William Wang\thanks{Massachusetts Institute of Technology (william12wang@gmail.com)} \\\and
  Kyle Worley\thanks{Department of Mathematics and Statistics, San José State University (kyle.worley@sjsu.edu)} \\
}
\date{}
\begin{document}
\maketitle
\begin{abstract}
Metric dimension is a graph parameter that has been applied to robot navigation and finding low-dimensional vector embeddings. Throttling entails minimizing the sum of two available resources when solving certain graph problems. In this paper, we introduce throttling for metric dimension, edge metric dimension, and mixed metric dimension. In the context of vector embeddings, metric dimension throttling finds a low-dimensional, low-magnitude embedding with integer coordinates. We show that computing the throttling number is NP-hard for all three variants. We give formulas for the throttling numbers of special families of graphs, and characterize graphs with extremal throttling numbers. We also prove that the minimum possible throttling number of a graph of order $n$ is $\Theta\left(\frac{\log{n}}{\log{\log{n}}}\right)$, while the minimum possible throttling number of a tree of order $n$ is $\Theta(n^{1/3})$ or $\Theta(n^{1/2})$ depending on the variant of metric dimension. \\

\noindent \textbf{Keywords:} Metric dimension; throttling; edge metric dimension; mixed metric dimension
\end{abstract}

\section{Introduction}
Metric dimension is a graph parameter used for robot navigation, low-dimensional vector embeddings, and classification of chemical compounds \cite{harary, kuziak, slater, tillquist}. In the first application, a robot is sent to a planet to perform a task such as surveillance or locating an object. A graph has been drawn on the surface of the planet, with vertices representing locations on the surface and edges representing pairs of locations that the robot can travel between. Several landmarks have been constructed on the surface of the planet to help with navigation. The robot is dropped somewhere on the planet, but its exact location is unknown. However, the landmarks have been placed in such a way that the robot can determine its exact position if it knows the distance to each landmark, regardless of where it was dropped on the planet.

A \emph{resolving set} of $G$ is a set of vertices $S \subset V(G)$ such that for every $x, y \in V(G)$, there exists some $v \in S$ such that $d(v, x) \neq d(v, y)$. The standard \emph{metric dimension} $\dim(G)$ is defined as the minimum possible size of a resolving set for $G$. In other words, if $G$ is the graph that was drawn on the surface of the planet, then $\dim(G)$ is the minimum possible number of landmarks necessary for the robot to determine its exact position, regardless of where it was dropped. Much is known about $\dim(G)$, including the exact values for trees and $d$-dimensional grids \cite{khuller} and characterizations of the graphs with minimum and maximum possible metric dimension \cite{chartrand}. 
For graphs $G$ with diameter at most $D$ and metric dimension at most $k$, there are sharp bounds on the maximum possible order of $G$ in terms of $D$ and $k$~\cite{hernando}. For graphs with bounded metric dimension, there are sharp bounds on the maximum order of subgraphs of bounded diameter \cite{gen, gkl, khuller}.


In addition to the standard metric dimension $\dim(G)$, many variants have been studied. In one variant called \emph{edge metric dimension}, any pair of edges in the graph must be distinguished by some vertex in the resolving set \cite{kelenc}, where we define $\dist(e, v) = \min(\dist(u, v), \dist(w, v))$ when $e = \{u, w\}$. An \emph{edge resolving set} of $G$ is a set of vertices $S \subset V(G)$ such that for every $e, f \in E(G)$, there exists some $v \in S$ such that $\dist(v, e) \neq \dist(v, f)$. The \emph{edge metric dimension} $\edim(G)$ is defined as the minimum possible size of an edge resolving set for $G$. In the context of the robot determining its position, the edge metric dimension is analogous to the metric dimension, except the robot is dropped on an edge instead of a vertex. Several papers on edge metric dimension \cite{gen, gkl, wei, zhu, zubrilina2, zubrilina} have obtained characterization results, extremal bounds, pattern avoidance results, asymptotic bounds for random graphs, and exact values for special families of graphs.

Another variant is the \emph{mixed metric dimension}, in which the set of objects to be distinguished includes both the vertices of $G$ as well as the edges \cite{mmd}. In particular, a \emph{mixed resolving set} of $G$ is a set of vertices $S \subset V(G)$ such that for every $e, f \in E(G)$ and $u, w \in V(G)$, there exists some $v \in S$ such that $\dist(v, e) \neq \dist(v, f)$, there exists some $v' \in S$ such that $\dist(v',e) \neq \dist(v',u)$, and there exists some $v'' \in S$ such that $\dist(v'',u) \neq \dist(v'',w)$. The \emph{mixed metric dimension} $\mdim(G)$ is defined as the minimum possible size of a mixed resolving set for $G$. From the definitions, it follows that $\mdim(G) \ge \max(\dim(G), \edim(G))$ for all graphs $G$. In the context of the robot determining its position, the mixed metric dimension is analogous to the metric dimension and the edge metric dimension, except the robot can be dropped on both vertices and edges. There have been several papers on mixed metric dimension \cite{danas, mmd_hyper, sedlar1, sedlar2} which have obtained characterization results, extremal bounds, and exact values for families of graphs.


In this paper, we introduce an extension of the above variants of metric dimension in which the goal is to minimize the number of landmarks plus the search radius of the sensor. 
The motivation is that each landmark has a cost $\ell$ and the sensor has a cost $c(r)$ which increases with its detection radius $r$. If we use $k$ landmarks and a sensor with detection radius $r$, then the total cost will be $c(r)+k \ell$. We focus on the subproblem where $c(r) = r$ and $\ell = 1$. The problem of minimizing $r+k$ is called \textit{throttling}. Throttling is usually thought of as minimizing a sum of resources and time. In our context, $c(r) = r$ would be the time cost, where the sensor takes $r$ time units to observe landmarks at distance $r$, and $k$ would be the resource cost.

The notion of throttling was originally introduced by Butler and Young \cite{butler} for the graph coloring process \emph{zero forcing} as the minimum possible sum of the number of initially colored vertices and the time for the graph to be completely colored. 
Like metric dimension, zero forcing has multiple variants including positive semidefinite zero forcing~\cite{psd}, skew zero forcing~\cite{skew}, and probabilistic zero forcing~\cite{cong}. Throttling has been defined and investigated for all of these variants \cite{psdth, skewth, ghpzf}. Throttling has also been studied for the cop versus robber game \cite{cr2, cr1, damage}, the cop versus gambler game \cite{gth2, gth1}, and the power domination problem \cite{pdthrot}. 

In our definition of throttling for metric dimension, we assume that the robot has an $r$-sensor that can determine the distance to all landmarks within a bounded radius $r$. In other words, if the robot is at vertex $x$ and $v$ is any landmark in the graph, the sensor tells the robot the value of $\dist_r(x, v) = \min(\dist(x, v), r+1)$. 

Define a \emph{distance-$r$ resolving set} of $G$ to be a set of vertices $S \subset V(G)$ such that for every $x, y \in V(G)$, there exists some $v \in S$ such that $\dist_r(v, x) \neq \dist_r(v, y)$. The \emph{$r$-truncated metric dimension} $\dim_r(G)$ is the minimum possible size of a distance-$r$ resolving set of $G$~\cite{frongillo,gt25}. In the case that $r = 1$, a distance-$r$ resolving set is also called an \emph{adjacency resolving set}, and the $r$-truncated metric dimension is also called the \emph{adjacency dimension} \cite{bermudo, jannesari}. 

\begin{definition}
    The metric dimension throttling number of $G$, $\thr(G)$, is the minimum possible value of $\dim_r(G)+r$ over all positive integers $r$.
\end{definition}


In the context of vector embeddings, the throttling number minimizes the sum of the dimension of the embedding and the maximum possible value of any coordinate in the embedding. Thus, a minimum throttling configuration provides an injective vector embedding with integer coordinates in which the number of dimensions and the magnitudes of the vectors are low. This is desirable, e.g., when transforming sequences of nucleotides or amino acids into numeric vectors as input for machine learning algorithms \cite{tillquist}.

It is clear that the maximum possible value of $\thr(G)$ over graphs $G$ of order $n$ is $n-1$, since we can place $n-1$ landmarks on the graph so that the robot knows its initial vertex as soon as it is dropped. Moreover, an example of a graph $G$ of order $n$ with $\thr(G) = n-1$ is $K_n$, since in $K_n$ every pair of vertices have the same neighborhood and thus there can be no pair of vertices that do not both have landmarks. The minimum possible value of $\thr(G)$ is a more interesting question. In this paper, we show that the minimum possible value of $\thr(G)$ over all graphs $G$ of order $n$ is $\Theta(\frac{\log{n}}{\log{\log{n}}})$. We also obtain $\Theta(\sqrt{n})$ bounds on metric dimension throttling numbers for families of graphs such as paths, cycles, spiders, and circulant graphs, as well as sharp bounds for grids and exact values for complete bipartite graphs.

In addition to standard metric dimension, we also define throttling for variants of metric dimension. We prove some general throttling results that apply to arbitrary subset-variants of metric dimension, and we also focus in particular on throttling for edge metric dimension and mixed metric dimension. 

As with metric dimension throttling, let $\dist_r(e, v) = \min(\dist(e, v), r+1)$. Define a \emph{distance-$r$ edge resolving set} of $G$ to be a set of vertices $S \subset V(G)$ such that for every $e, f \in E(G)$, there exists some $v \in S$ such that $\dist_r(v, e) \neq \dist_r(v, f)$. We define \emph{$r$-truncated edge metric dimension} $\edim_r(G)$ to be the minimum possible size of a distance-$r$ edge resolving set of $G$.

\begin{definition}
    The edge metric dimension throttling number of $G$, $\thre(G)$, is the minimum possible value of $\edim_r(G)+r$ over all positive integers $r$.
\end{definition}

Similarly, define a \emph{distance-$r$ mixed resolving set} of $G$ to be a set of vertices $S \subset V(G)$ such that for every $e, f \in E(G)$ and $u, w \in V(G)$, there exists some $v \in S$ such that $\dist_r(v, e) \neq \dist_r(v, f)$, there exists some $v' \in S$ such that $\dist_r(v',e) \neq \dist_r(v',u)$, and there exists some $v'' \in S$ such that $\dist_r(v'',u) \neq \dist_r(v'',w)$. We define \emph{$r$-truncated mixed metric dimension} $\mdim_r(G)$ to be the minimum possible size of a distance-$r$ mixed resolving set of $G$.

\begin{definition}
    The mixed metric dimension throttling number of $G$, $\thrm(G)$, is the minimum possible value of $\mdim_r(G)+r$ over all positive integers $r$.
\end{definition}

Based on the preceding definitions, it is natural to extend the notion of throttling to other variants of metric dimension. We first define the class of subset-variants of metric dimension, which encompasses all the variants discussed above, as follows.

\begin{definition}
Let $G$ be a graph and $T_{\xdim}(G)$ be a set of subsets of $V(G)$. For any $X \subseteq V(G)$, let \[\dist(X, v) = \min_{u \in X} \dist(u, v),\] where if $X = \emptyset$ then $\dist(X, v) = \infty$. A set of vertices $S \subseteq V(G)$ is $\xdim$-resolving for $G$ if for all distinct $X, Y \in T_{\xdim}(G)$ there exists $v \in S$ such that $\dist(X, v) \neq \dist(Y, v)$. The minimum possible size of an $\xdim$-resolving set for $G$ is $\xdim(G)$, and we say that $\xdim$ is a \emph{subset-variant} of metric dimension. 
\end{definition}

Standard metric dimension, edge metric dimension, and mixed metric dimension are all subset-variants of metric dimension. In particular, with slight abuse of notation, when $\xdim = \dim$, we have $T_{\xdim}(G) = V(G)$, when $\xdim = \edim$, we have $T_{\xdim}(G) = E(G)$, and when $\xdim = \mdim$, we have $T_{\xdim}(G) = V(G) \cup E(G)$. 

We now define throttling for an arbitrary subset-variant of metric dimension. For any subset $S \subseteq V(G)$ and $v \in V(G)$, let $\dist_r(S, v) = \min(\dist(S, v), r+1)$. For any subset-variant $\xdim$ of metric dimension, define a \emph{distance-$r$ $\xdim$-resolving set} of $G$ to be a set of vertices $S \subset V(G)$ such that for all distinct $X, Y \in T_{\xdim}(G)$, there exists $v \in S$ such that $\dist_r(X, v) \neq \dist_r(Y, v)$. We define \emph{$r$-truncated $\xdim$}, denoted $\xdim_r(G)$, to be the minimum possible size of a distance-$r$ $\xdim$-resolving set of $G$.

\begin{definition}
    For any subset-variant $\xdim$ of metric dimension, the $\xdim$ throttling number of $G$, $\thrx(G)$, is the minimum possible value of $\xdim_r(G)+r$ over all positive integers $r$.
\end{definition}

This paper is organized as follows. Section \ref{prelim} recalls graph-related terminology and notations. In Section~\ref{sec:complexity}, we show that it is NP-hard to compute the throttling numbers for standard, edge, and mixed metric dimension, and give an integer programming model for computing the throttling number.
In Section~\ref{sec:general}, we prove general results about throttling for arbitrary subset-variants of metric dimension, which we use to derive corollaries for standard, edge, and mixed metric dimension. 
In Section~\ref{sec:standard}, we focus on standard metric dimension; we derive asymptotic bounds, characterize graphs with extremal throttling numbers, and study throttling for specific families of graphs.
In Sections~\ref{sec:edge} and~\ref{sec:mixed}, we obtain analogous results about edge and mixed metric dimension, respectively. 
We conclude with a summary and directions for future work in Section~\ref{sec:conclusion}.

\section{Terminology}
\label{prelim}

A simple graph $G=(V,E)$ consists of a vertex set $V$ and an edge set $E$ of two-element subsets of $V$.  The \emph{order} of $G$ is denoted by $n=|V|$. Two vertices $v,w\in V$ are \emph{adjacent}, or \emph{neighbors}, if $\{v,w\}\in E$. When there is no scope for confusion, we will write the edge $e=\{u,v\}$ as $uv$. The \emph{neighborhood} of $v\in V$ is the set of all vertices which are adjacent to $v$, denoted $N(v)$; the \emph{closed neighborhood} of $v$, denoted $N[v]$, is the set $N(v)\cup\{v\}$. The \emph{degree} of $v\in V$ is defined as $\deg(v)=|N(v)|$. Given $S \subset V$, the \emph{induced subgraph} $G[S]$ is the subgraph of $G$ whose vertex set is $S$ and whose edge set consists of all edges of $G$ which have both endpoints in $S$. The \emph{complement} of graph $G$, denoted $\overline{G}$, is the graph with vertex set $V(G)$, where two vertices $u$ and $v$ are adjacent in $\overline{G}$ if and only if they are not adjacent in $G$. The \emph{disjoint union} of graphs $G$ and $H$ is denoted $G + H$. 

A graph is a \emph{singleton} if it consists of a single vertex of degree $0$. The \emph{degree set} of a graph $G$ is the set of integers that are the degrees of the vertices of the graph. For a set of integers $S$, a {\em circulant graph} $\Circ n S$ is a graph on vertices $\{1,\dots,n\}$ with $ij$ being an edge if and only if $i-j$ or $j-i$ is in $S$ modulo $n$. We call $S$ the {\em connection set} and by convention we assume for $x\in S$, $1\leq x\leq \lfloor \frac{n}{2}\rfloor$. A \emph{spider} is a tree that has only one vertex of degree greater than $2$. This vertex is called the \emph{body vertex}, and the graph obtained by removing the body vertex is a disjoint union of paths. Each of these paths is called a \emph{leg} of the spider, and the number of edges of each of these paths is the length of the corresponding leg. We say that a spider is \emph{balanced} if all legs have the same length, otherwise the spider is \emph{unbalanced}. 

The \textit{distance vector} of a vertex $v$ with respect to a list of landmarks $L_1, L_2, \dots, L_k$ is the list $\dist(v,L_1),\dist(v,L_2),\dots,\dist(v,L_k)$. Thus, a set of vertices $S \subseteq V(G)$ is $\xdim$-resolving for $G$ if all $X \in T_{\xdim}(G)$ have a unique distance vector with respect to some ordering of $S$. Note that distances are $\infty$ for vertices not in the same connected component. The \textit{$r$-truncated distance vector} of a vertex $v$ with respect to a list of landmarks $L_1, L_2, \dots, L_k$ is the list $\min(r+1,\dist(v,L_1)),\min(r+1,\dist(v,L_2)),\dots,\min(r+1,\dist(v,L_k))$. For other graph theoretic notions and notations, we generally follow \cite{west}. 

\section{Complexity and computation}\label{sec:complexity}

Let {\sc Metric Dimension} (MD) denote the decision problem of determining for a given graph $G$ and positive integer $k$ whether $\dim(G) \le k$. Let {\sc Metric Dimension Throttling} (MDT) denote the decision problem of determining for a given graph $G$ and positive integer $k$ whether $\thr(G) \le k$. Analogously define the decision problems {\sc Edge Metric Dimension} (EMD), {\sc Edge Metric Dimension Throttling} (EMDT), {\sc Mixed Metric Dimension} (MMD), {\sc Mixed Metric Dimension Throttling} (MMDT), {\sc Subset-variant Metric Dimension} (SMD), and {\sc Subset-variant Metric Dimension Throttling} (SMDT). It is already known that MD is NP-Complete \cite{khuller}, EMD is NP-Complete \cite{kelenc}, and MMD is NP-Complete \cite{mmd}. Therefore SMD is NP-Complete because MD, EMD, and MMD can all be reduced to SMD.
We will now show that MDT, EMDT, and MMDT are all also NP-Complete. 

\begin{thm}
    {\sc Metric Dimension Throttling} (MDT) is NP-Complete. 
\end{thm}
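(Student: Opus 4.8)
The plan is to show NP-completeness by first observing membership in NP, and then giving a polynomial-time reduction from {\sc Metric Dimension} (MD), which is already known to be NP-complete \cite{khuller}. Membership in NP is easy: given a graph $G$ and a candidate value $k$, a certificate consists of an integer $r$ with $1 \le r \le k$ together with a set $S \subseteq V(G)$ with $|S| \le k - r$; one can verify in polynomial time that $S$ is a distance-$r$ resolving set by computing all pairwise truncated distance vectors. So the work is entirely in the hardness reduction.

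The main difficulty is that throttling couples two quantities—the number of landmarks and the radius—so a naive reduction (take the same graph $G$) fails: a small radius could compensate for a large resolving set, or vice versa. The standard trick here is to \emph{pad} the instance so that increasing $r$ beyond $1$ is never profitable, forcing $\thr(G')$ to equal $\dim_1(G') + 1$ (the adjacency-dimension throttling configuration), and then to further arrange that $\dim_1(G')$ on the padded graph encodes $\dim(G)$ on the original graph. Concretely, I would take the input $(G,k)$ to MD and build $G'$ from $G$ by attaching, to each vertex $v \in V(G)$, a sufficiently long pendant path (or a gadget of pendant paths) of length $\ell$, where $\ell$ is chosen large compared to the diameter of $G$ and to $k$. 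Two effects should result. First, any distance-$r$ resolving set for modest $r$ must still "reach" the tips of all these pendant paths, which pins down a large cost unless $r$ is large; conversely, making $r$ as large as $\mathrm{diam}(G')$ costs $\Omega(\ell)$, which we make exceed $n$, so the optimal throttling radius stays small—ideally exactly the value that makes truncation irrelevant on the part of $G'$ coming from $G$. Second, with truncation inactive on the $G$-part, a distance-$r$ resolving set restricted to $V(G)$ behaves like an ordinary resolving set of $G$ (the pendant paths, being paths, are cheaply resolved by putting a landmark near each tip or by exploiting that path-tips are forced to be resolved), so $\dim_r(G') = \dim(G) + (\text{fixed overhead from the gadgets}) $.

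The key steps, in order, are: (1) define the gadget construction $G \mapsto G'$ with the padding parameter $\ell$ specified as a polynomial in $n$ and $k$; (2) prove a lower bound: for \emph{every} radius $r$, $\dim_r(G') + r \ge \dim(G) + c$ for the appropriate constant/overhead $c$, handling separately the regime of small $r$ (where the pendant-path tips force many landmarks, and the induced behavior on $V(G)$ forces a resolving set of $G$) and the regime of large $r$ (where $r$ alone already exceeds the target); (3) prove the matching upper bound by exhibiting, from a resolving set $W$ of $G$ of size $\dim(G)$, an explicit distance-$r^\star$ resolving set of $G'$ of size $\dim(G) + c - r^\star$ for a specific small $r^\star$; (4) conclude that $\thr(G') \le k + c - r^\star \iff \dim(G) \le k$, so the MDT instance $(G', k')$ with $k' := k + c$ (adjusting for $r^\star$) is a yes-instance exactly when $(G,k)$ is, completing the reduction.

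The step I expect to be the genuine obstacle is step (2) in the small-$r$ regime: showing that truncation at radius $r$ does not let the pendant-path landmarks "double up" as resolvers for vertices of $G$ in a way that beats $\dim(G)$—i.e., that the cost really does decompose as (cost to resolve $G$) $+$ (cost to resolve the gadgets). This requires choosing the gadget carefully (e.g., pendant paths long enough that their tips are mutually unresolvable unless landmarks are placed deep inside them, so that those landmarks "see" nothing of $G$ within radius $r$) and a careful case analysis of which vertex pairs in $G'$ a landmark placed in a gadget can distinguish. The analogous arguments for EMDT and MMDT (reducing from EMD and MMD respectively) should follow the same template with the same gadget, since pendant paths interact cleanly with edge- and mixed-resolving requirements, and I would state those as immediate adaptations rather than repeating the full analysis.
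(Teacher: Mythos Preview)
Your high-level plan---reduce from {\sc Metric Dimension} and pad with path gadgets so that the landmark cost decomposes---is right, and the paper does exactly this. But the specific gadget you describe makes your own step~(2) hard, and there is an internal inconsistency in what you say about the optimal radius.

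On the inconsistency: you first write that the padding should force the optimum to occur at $r=1$, so that $\thr(G')=\dim_1(G')+1$ and $\dim_1(G')$ encodes $\dim(G)$. That cannot work: with $r=1$ the restriction to $V(G)$ of any distance-$1$ resolving set of $G'$ is only an \emph{adjacency} resolving set of $G$, and $\dim_1(G)$ bears no controlled relation to $\dim(G)$ (for instance $\dim_1(P_n)=\Theta(n)$ while $\dim(P_n)=1$). Two sentences later you instead want the optimal $r$ to be at least $\mathrm{diam}(G)$ so that truncation is inactive on $V(G)$; these two aims are incompatible. The paper does not attempt to pin down the optimal $r$ at all---its lower bound is a two-case argument valid for every $r$.

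On the gadget: attaching a pendant path at each $v\in V(G)$ does not separate costs. A landmark placed at depth $k$ on the path rooted at $v$ sees any $u\in V(G)$ at distance $k+d_G(v,u)$, so for pairs in $V(G)$ it behaves exactly like a landmark at $v$ with effective radius $r-k$. Hence landmarks placed to handle the path tips can simultaneously resolve pairs of $V(G)$, and the decomposition in your step~(2) does not come for free; you flag this as the obstacle but offer no mechanism to overcome it. The paper sidesteps the interaction entirely by making the padding \emph{disjoint}: to $G$ (of order $n$) it adjoins $n$ disjoint copies of $P_{n+1}$ together with an isolated vertex. Since every cross-component truncated distance is $r+1$, no landmark on a path can distinguish vertices of $G$ and vice versa, so at least $\dim(G)$ landmarks are needed on $V(G)$ regardless. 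The lower bound is then a short case split on the paths: if every path carries at least two landmarks, that alone costs $2n$; if some path carries only one, it must sit at an endpoint, and distinguishing the opposite endpoint from the isolated vertex forces $r\ge n$ while the $n$ paths still cost at least $n$ landmarks. Either way $r+\dim_r(G')\ge 2n+\dim(G)$, and the obvious construction (one endpoint landmark per path, a minimum resolving set on $G$, radius $r=n$) meets the bound exactly. Your attached-path idea may be salvageable with more work, but the disjoint construction eliminates precisely the interaction you identified as the difficulty.
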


\begin{proof}
    It is clear that MDT is in NP. To see that it is NP-hard, we exhibit a polynomial-time reduction from MD to MDT. For any graph $G$ of order $n$, let $G'$ be obtained from $G$ by adding $n$ disjoint paths, each with $n+1$ vertices, as well as a singleton vertex. 
    
    By construction, we must have $\thr(G') = 2n+\dim(G)$. Indeed, any resolving set for $G$ of size $\dim(G)$ can be extended to a resolving set for $G'$ of size $n+\dim(G)$ by putting a new landmark on a single endpoint from each path, and the greatest distance between any two vertices in the same connected component is $n$, so $\thr(G') \le 2n+\dim(G)$. 

    To see that $\thr(G') \ge 2n+\dim(G)$, let $\thr(G')=r+\dim_r(G')$ and consider a minimum $r$-resolving set of $G'$. We split into two cases. For the first case, suppose that we $r$-resolve $G'$ with at least two landmarks on each of the $n$ disjoint paths. Then, we use at least $2n$ landmarks for the disjoint paths, and we must use at least $\dim(G)$ landmarks to resolve the vertices of $G$ in $G'$, so the $r$-resolving set has size at least $2n+\dim(G)$. 
    
    For the second case, suppose that at least one of the disjoint paths does not have two landmarks.  First, note that every disjoint path must have at least one landmark, or it cannot be resolved. Note that any disjoint path with a single landmark must have the landmark at an endpoint, or else the landmark would not resolve the vertices of the path. Moreover, the throttling radius $r$ must be at least $n$, or else the endpoint that is opposite from the single landmark would not be resolved from the singleton. Therefore, the $r$-resolving set has size at least $n+\dim(G)$, and the throttling radius $r$ is at least $n$, so we have $\thr(G') \ge 2n+\dim(G)$.
    
    Thus, we have $\dim(G) \le k$ if and only if $\thr(G') \le 2n+k$. Therefore, MDT is NP-hard.
\end{proof}

A modified construction is required in the NP-hardness reduction for edge metric dimension throttling.

\begin{thm}
    {\sc Edge Metric Dimension Throttling} (EMDT) is NP-Complete. 
\end{thm}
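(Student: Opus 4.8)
The plan is to reduce from {\sc Edge Metric Dimension} (EMD), mirroring the construction used for {\sc Metric Dimension Throttling} but with one necessary change: the isolated vertex adjoined there carries no edge and hence is useless for distinguishing edges, so I replace it by a copy of $K_2$. Precisely, given an instance $(G,k)$ of EMD, where we may assume $G$ is connected with $\edim(G)\ge 1$ (EMD remains NP-complete under these restrictions), let $G'$ be obtained from $G$ by adjoining $n$ vertex-disjoint paths on $n+1$ vertices each together with one disjoint edge $e^*$ (a copy of $K_2$). The reduction is polynomial since $|V(G')|$ is polynomial in $n$, and EMDT lies in NP with certificate $(S,r)$. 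I claim $\thre(G')=2n-1+\edim(G)$, which gives $\edim(G)\le k$ iff $\thre(G')\le 2n-1+k$ and finishes the proof.

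For the upper bound I take $r=n-1$, place one landmark at an endpoint of each adjoined path, and add a minimum edge resolving set of $G$, for a total of $n+\edim(G)$ landmarks. One checks with $r=n-1$ that: inside each path the $n$ edges sit at distances $0,1,\dots,n-1$ from the chosen endpoint and are therefore pairwise separated; the farthest edge of path $i$ has truncated distance $n-1$ to that path's landmark and $n=r+1$ to every other landmark, which separates the far edges of distinct paths from one another and from $e^*$ (whose truncated distance vector is all $n$'s); every edge of $G$ has truncated distance $\le n-1<n$ to each of the $\ge 1$ landmarks inside $G$, separating $G$-edges from all path edges and from $e^*$; and since all distances inside $G$ are at most $n-1<r+1$, no truncation occurs there, so the minimum edge resolving set of $G$ still does its job. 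Hence $\thre(G')\le (n-1)+(n+\edim(G))=2n-1+\edim(G)$.

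For the lower bound fix an optimal distance-$r$ edge resolving set $S$ of $G'$. Since the adjoined components are disjoint from $G$, $S\cap V(G)$ must itself be a distance-$r$ edge resolving set of $G$; and because a distance-$r$ edge resolving set is in particular an edge resolving set, $|S\cap V(G)|\ge\edim_r(G)\ge\edim(G)$. Also every adjoined path needs at least one landmark (a landmarkless path leaves $n\ge 2$ edges with the common all-$(r+1)$ vector). If some path carries exactly one landmark, it must lie at an endpoint, since a landmark at an interior vertex $u_j$ leaves the two edges at $u_j$ at distance $0$ from the only landmark of that component and at distance $\infty$ from all others, hence unresolved. Now split into cases. (i) Every path has $\ge 2$ landmarks: then $|S|\ge 2n+\edim(G)$, so $r+|S|\ge 2n+1+\edim(G)$. (ii) Some path is under-resourced (one landmark, at an endpoint) and $e^*$ carries a landmark: then $|S|\ge n+\edim(G)+1$, while resolving the $n$ edges of that path from its single endpoint landmark forces $r\ge n-2$, so $r+|S|\ge 2n-1+\edim(G)$. (iii) Some path is under-resourced and $e^*$ carries no landmark: then the farthest edge of that path (at distance $n-1$ from the lone landmark) and $e^*$ would both have the all-$(r+1)$ vector unless $r\ge n-1$, so with $|S|\ge n+\edim(G)$ we again get $r+|S|\ge 2n-1+\edim(G)$. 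In every case $\thre(G')\ge 2n-1+\edim(G)$, matching the upper bound.

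I expect the bookkeeping particular to edges to be the main obstacle. Two points need care: in the good configuration one must confirm that $e^*$ is neither resolved ``for free'' nor accidentally merged with a $G$-edge (which is why $G$ is taken connected with $\edim(G)\ge 1$); and in the lower bound one must account for an optimal solution that spends one landmark on $K_2$ precisely to dodge the $e^*$ argument — this extra case, absent in the metric-dimension proof, is exactly why a single vertex does not suffice and $K_2$, contributing a genuine edge while still costing only one landmark, is the correct replacement. The remaining ingredients — the truncated-distance computations along paths, the inequality $\edim_r(G)\ge\edim(G)$, and the endpoint-landmark observation — are routine.
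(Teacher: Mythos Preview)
Your proof is correct and follows essentially the same approach as the paper: adjoin $n$ disjoint paths and a copy of $K_2$, then argue by cases on how many landmarks fall on the paths. The only differences are cosmetic---you take paths on $n+1$ vertices (giving $\thre(G')=2n-1+\edim(G)$) where the paper uses $n+2$ (giving $2n+\edim(G)$)---and your case analysis is actually more careful: you explicitly treat the possibility that the optimal configuration spends a landmark on the $K_2$ (your case~(ii)), which the paper's argument silently folds into its second case.
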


\begin{proof}
    It is clear that EMDT is in NP. To see that it is NP-hard, we exhibit a polynomial-time reduction from EMD to EMDT. For any graph $G$ of order $n$, let $G'$ be obtained from $G$ by adding $n$ disjoint paths, each with $n+2$ vertices, as well as a copy of $K_2$. 

    By construction, we must have $\thre(G') = 2n+\edim(G)$. Indeed, any edge resolving set for $G$ of size $\edim(G)$ can be extended to an edge resolving set for $G'$ of size $n+\dim(G)$ by putting a new landmark on a single endpoint from each path, and the greatest distance between any vertex and any edge in the same connected component is $n$, so $\thre(G') \le 2n+\edim(G)$. 

    To see that $\thre(G') \ge 2n+\edim(G)$, we split into two cases. For the first case, suppose that we resolve the edges of $G'$ with at least two landmarks on each of the $n$ disjoint paths. Then, we use at least $2n$ landmarks for the disjoint paths, and we must use at least $\edim(G)$ landmarks to resolve the edges of $G$ in $G'$, so the edge resolving set has size at least $2n+\edim(G)$.

    For the second case, suppose that at least one of the disjoint paths does not have two landmarks.  First, note that every disjoint path must have at least one landmark, or its edges cannot be resolved. Note that any disjoint path with a single landmark must have the landmark at an endpoint, or else the landmark would not resolve the edges of the path. Moreover, the throttling radius must be at least $n$, or else the edge that contains the endpoint that is opposite from the single landmark would not be resolved from the edge in the copy of $K_2$. Therefore, the edge resolving set has size at least $n+\edim(G)$, and the throttling radius is at least $n$, so we have $\thre(G') \ge 2n+\edim(G)$.
    
    Thus, we have $\edim(G) \le k$ if and only if $\thre(G') \le 2n+k$. Therefore, EMDT is NP-hard.
\end{proof}

We use a similar reduction to obtain NP-hardness of throttling for mixed metric dimension. In this case, note that each path must have at least two landmarks. 

\begin{thm}
    {\sc Mixed Metric Dimension Throttling} (MMDT) is NP-Complete. 
\end{thm}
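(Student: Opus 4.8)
The plan is to mirror the reductions for MDT and EMDT, reducing from {\sc Mixed Metric Dimension}; the new ingredient is that every leaf of a graph lies in every (truncated) mixed resolving set. Membership in NP is routine: a certificate is a pair $(S,r)$, and one verifies in polynomial time that $S$ is a distance-$r$ mixed resolving set with $|S|+r\le k$ by computing the relevant truncated distance vectors. For NP-hardness, given a connected graph $G$ of order $n\ge 3$ (MMD is already NP-hard under these hypotheses), I would let $G'$ be $G$ together with $n$ vertex-disjoint copies of $P_{n+1}$, and aim to show $\thrm(G')=3n-1+\mdim(G)$; this gives $\mdim(G)\le k \iff \thrm(G')\le 3n-1+k$, and $G'$ is built in polynomial time, so the reduction follows.

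The two facts to establish first are: (i) for every $r$, each leaf $\ell$ of $G'$, with neighbor $s$, is in every distance-$r$ mixed resolving set — because for any $w\ne\ell$ one checks $\dist_r(w,s)=\dist_r(w,\{\ell,s\})$ (using that $\dist(w,\ell)=\dist(w,s)+1$ whenever $\dist(w,s)<\infty$, and that both quantities equal $r+1$ otherwise), so only $\ell$ itself can separate the vertex $s$ from the edge $\ell s$; and (ii) in a copy of $P_{n+1}$ whose only chosen landmarks are the two endpoints $v_1,v_{n+1}$, separating $v_1$ from the edge $v_1v_2$ requires a landmark other than $v_1$ within distance $r$ of $v_2$, which forces $r\ge \dist(v_{n+1},v_2)=n-1$. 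For the upper bound, I would extend a size-$\mdim(G)$ mixed resolving set of $G$ by both endpoints of each path and take $r=n-1$: since $\operatorname{diam}(G)\le n-1$ nothing in $G$ is truncated, within each path the two endpoints give vertex $v_a$ the vector $(a-1,n+1-a)$ and edge $v_av_{a+1}$ the vector $(a-1,n-a)$ (so path vertices and edges are mutually separated, the $v_1$/$v_1v_2$ case working exactly because $r=n-1$), and objects in different components are separated since every object of a path has a coordinate below $r+1$ in that path's two slots, while $G$-objects get values $\le n-1<r+1$ from a landmark in $G$. This yields $\thrm(G')\le 2n+(n-1)+\mdim(G)$.

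For the lower bound, take an optimal distance-$r$ mixed resolving set $S$ and set $g=|S\cap V(G)|$ and $p_i=|S\cap V(P^{(i)}_{n+1})|$. Landmarks outside $V(G)$ give the constant value $r+1$ to every object of $G$, so $S\cap V(G)$ must resolve $G$, giving $g\ge\mdim_r(G)\ge\mdim(G)$; and $p_i\ge 2$ by (i). If some $p_i=2$, then (ii) gives $r\ge n-1$, hence $\mdim_r(G)=\mdim(G)$, and $|S|+r\ge\mdim(G)+2n+(n-1)$; if every $p_i\ge 3$, then $|S|+r\ge\mdim(G)+3n+1$. Either way $\thrm(G')\ge 3n-1+\mdim(G)$, matching the upper bound. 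I expect the radius-pinning to be the crux: unlike MDT and EMDT, one cannot trade landmarks for radius "for free" by putting a single landmark on a path, and instead each path separately offers a trade-off between adding a third landmark and shrinking the radius it needs — the case split above is what shows all of these trade-offs are dominated by "two endpoints, radius $n-1$." A secondary care point is that $\mdim_r(G)=\mdim(G)$ is only guaranteed once $r\ge\operatorname{diam}(G)$, which is why the added paths have order $n+1$: this is exactly large enough that, in the one case where $r$ is not already at least $n-1$ from $G$-resolution alone, fact (ii) forces it there. Finally, I would either check the small cases $n\le 2$ directly or simply restrict the reduction to $n\ge 3$, where MMD remains NP-hard.
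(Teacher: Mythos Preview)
Your proposal is correct and follows essentially the same reduction as the paper: build $G'$ from $G$ by adding $n$ disjoint paths, force at least two landmarks per path via the leaf observation, and split into the cases ``some path has exactly two landmarks'' (pinning the radius) versus ``every path has three''. The only cosmetic differences are that the paper uses paths of order $n+2$ (yielding $\thrm(G')=3n+\mdim(G)$ with radius $n$) whereas you use $P_{n+1}$ (yielding $3n-1+\mdim(G)$ with radius $n-1$), and that you prove the leaf fact directly for truncated resolving sets while the paper cites \cite{mmd}; your explicit restriction to connected $G$ with $n\ge 3$ is harmless since MMD remains NP-hard there.
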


\begin{proof}
    It is clear that MMDT is in NP. To see that it is NP-hard, we exhibit a polynomial-time reduction from MMD to MMDT. For any graph $G$ of order $n$, let $G'$ be obtained from $G$ by adding $n$ disjoint paths, each with $n+2$ vertices. 

    By construction, we must have $\thrm(G') = 3n+\mdim(G)$. Indeed, any mixed resolving set for $G$ of size $\mdim(G)$ can be extended to a mixed resolving set for $G'$ of size $2n+\dim(G)$ by putting a new landmark on both endpoints for each path. The greatest distance between any vertex and any vertex or edge in the same connected component is $n+1$, so $\thrm(G') \le 3n+\mdim(G)$. 

    To see that $\thrm(G') \ge 3n+\mdim(G)$, we split into two cases. For the first case, suppose that we resolve the vertices and edges of $G'$ with at least $3$ landmarks on each of the $n$ disjoint paths. Then, we use at least $3n$ landmarks for the disjoint paths, and we must use at least $\mdim(G)$ landmarks to resolve the vertices and edges of $G$ in $G'$, so the mixed resolving set has size at least $3n+\mdim(G)$.

    For the second case, suppose that at least one of the disjoint paths does not have three landmarks. First, note that every disjoint path must have at least two landmarks, or its vertices and edges cannot be resolved. Also, note that any disjoint path with two landmarks must have the landmarks at both endpoints, or else the landmarks would not resolve the vertices and edges of the path \cite{mmd}. Moreover, the throttling radius must be at least $n$, or else any edge that contains an endpoint of one of the disjoint paths would not be resolved from the endpoint that it contains. Therefore, the mixed resolving set has size at least $2n+\mdim(G)$, and the throttling radius is at least $n$, so we have $\thrm(G') \ge 3n+\mdim(G)$.
    
    Thus, we have $\mdim(G) \le k$ if and only if $\thrm(G') \le 3n+k$. Therefore, MMDT is NP-hard.
\end{proof}

Since the problems above are subproblems of SMDT, we have the following corollary.

\begin{cor}
{\sc Subset-variant Metric Dimension Throttling} (SMDT) is NP-Complete.    
\end{cor}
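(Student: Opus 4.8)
The plan is short, since this corollary follows immediately from the three preceding theorems together with the observation that SMDT is expressive enough to encode each of MDT, EMDT, and MMDT as a special case.

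First I would verify membership in NP. A certificate for a yes-instance $(G, T_{\xdim}(G), k)$ is a pair $(S, r)$ with $S \subseteq V(G)$ and $r$ a positive integer such that $|S| + r \le k$ and $S$ is a distance-$r$ $\xdim$-resolving set for $G$. Checking the second condition amounts to checking, for each of the $\binom{|T_{\xdim}(G)|}{2}$ pairs $X, Y \in T_{\xdim}(G)$, whether some $v \in S$ has $\dist_r(X, v) \neq \dist_r(Y, v)$; each truncated distance $\dist_r(X,v) = \min(\dist(X,v), r+1)$ is computable in polynomial time (e.g.\ by breadth-first search), and since $T_{\xdim}(G)$ is supplied explicitly as part of the input, its size is polynomial, so the whole verification runs in polynomial time. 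One may additionally assume without loss of generality that $r < n$, since a larger radius never decreases $\xdim_r(G)+r$.

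For NP-hardness, I would give a trivial polynomial-time reduction from MDT, which is NP-hard by the first theorem of this section. Given an instance $(G, k)$ of MDT, output the SMDT instance $\left(G,\ \{\{v\} : v \in V(G)\},\ k\right)$. For this choice of $T_{\xdim}(G)$, a distance-$r$ $\xdim$-resolving set is precisely a distance-$r$ resolving set of $G$ in the standard sense, so the corresponding $\xdim$ throttling number equals $\thr(G)$; hence the instance is a yes-instance of SMDT if and only if $(G,k)$ is a yes-instance of MDT. (Equally well one could reduce from EMDT or MMDT by taking $T_{\xdim}(G) = E(G)$ or $T_{\xdim}(G) = V(G)\cup E(G)$.)

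I do not anticipate any genuine obstacle here; the only point meriting a line of care is confirming that, because $T_{\xdim}(G)$ appears explicitly in the input, the number of pairs to be tested in the verification is polynomial, which is immediate. Combining the two parts shows that SMDT is NP-Complete.
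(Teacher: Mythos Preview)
Your proposal is correct and follows essentially the same route as the paper: the paper simply notes that MDT, EMDT, and MMDT are subproblems of SMDT and concludes the corollary in one line. Your write-up is more careful than the paper's, in that you make explicit the input format (with $T_{\xdim}(G)$ listed explicitly) and spell out the NP-membership verification, a point the paper leaves implicit.
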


We conclude this section by presenting an algorithm based on integer programming for computing the metric dimension throttling number of a graph. We start with a model for truncated metric dimension. 

\begin{Model}{IP model for truncated metric dimension}\label{model1}
\begin{align}
\nonumber \min \quad &  \sum_{i=k}^n  x_k \\
\nonumber \emph{s.t.:} \quad &  \sum_{k = 1}^n \left|\min(\dist(v_k,v_i),r+1)-\min(\dist(v_k,v_j),r+1)\right|x_k > 0\\
\nonumber & \qquad\qquad\qquad\qquad\qquad\qquad\qquad\qquad\qquad \emph{for } 1 \le i < j \le n\\
\nonumber & x_i\in \{0,1\} \emph{ for  } 1\leq k\leq n. 
\end{align}
\end{Model}

\begin{prop}
The optimal value of Model \ref{model1} is equal to  $\dim_r(G)$.
\end{prop}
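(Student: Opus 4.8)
The plan is to set up a cardinality-preserving correspondence between feasible $0/1$ points of Model~\ref{model1} and distance-$r$ resolving sets of $G$, and then read off the optimum. (We interpret the objective $\sum_{i=k}^n x_k$ and the integrality line as $\sum_{k=1}^n x_k$ and $x_k \in \{0,1\}$ for $1 \le k \le n$, the stray index $i$ being a typographical slip.) Concretely, for $x \in \{0,1\}^n$ let $S_x = \{v_k : x_k = 1\} \subseteq V(G)$, and conversely let $x^S$ be the indicator vector of a set $S$. Since $\sum_{k=1}^n x_k = |S_x|$, it suffices to prove that $x$ is feasible if and only if $S_x$ is a distance-$r$ resolving set; the minimum of the objective over feasible points is then $\min\{|S| : S \text{ is distance-}r\text{ resolving}\}$, which is exactly $\dim_r(G)$ by definition.

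The key step is the equivalence of the constraint system with the resolving condition. Fix a pair $1 \le i < j \le n$. Each summand $\left|\min(\dist(v_k,v_i),r+1) - \min(\dist(v_k,v_j),r+1)\right| x_k$ is a nonnegative integer (with the usual convention $\min(\infty, r+1) = r+1$ handling vertices in distinct components). A sum of nonnegative terms is strictly positive precisely when some term is positive, so the constraint for the pair $(i,j)$ holds if and only if there is an index $k$ with $x_k = 1$ and $\dist_r(v_k, v_i) \neq \dist_r(v_k, v_j)$, where $\dist_r(v,w) = \min(\dist(v,w), r+1)$. This says exactly that some vertex of $S_x$ distance-$r$ resolves the pair $\{v_i, v_j\}$. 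Letting $(i,j)$ range over all pairs with $i < j$ (the case $i = j$ being vacuous), feasibility of $x$ is equivalent to $S_x$ distance-$r$ resolving every pair of vertices of $G$, i.e.\ to $S_x$ being a distance-$r$ resolving set.

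Putting the two observations together completes the argument: feasible $0/1$ points are in bijection with distance-$r$ resolving sets via $x \leftrightarrow S_x$, and the objective value of $x$ equals $|S_x|$, so the optimal value of Model~\ref{model1} equals $\dim_r(G)$. I do not expect a genuine obstacle here; the only points needing a moment's care are the reduction of a sum-of-absolute-values inequality to an existential statement (valid because the terms are nonnegative) and checking that the truncation $\min(\cdot, r+1)$ correctly subsumes the disconnected case, both of which are immediate.
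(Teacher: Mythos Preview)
Your proposal is correct and follows essentially the same approach as the paper: set up the correspondence between $0/1$ feasible points and distance-$r$ resolving sets via the indicator vector, observe that each constraint is equivalent to some landmark distinguishing the pair $(v_i,v_j)$ under the truncated metric, and conclude that the minimum objective value is $\dim_r(G)$. Your write-up is in fact slightly more careful than the paper's (you explicitly note the nonnegativity argument and the disconnected case), but the underlying argument is identical.
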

\begin{proof}
Let $G$ be a graph of order $n$ with vertices $v_1, \dots, v_n$, where variable $x_i$ of Model \ref{model1} has a value of 1 if there is a landmark at $v_i$, and 0 otherwise. Let $X$ be the set of all vertices for which $x_i=1$ in a feasible solution of Model \ref{model1}. Note that $\dist_r(a, b) = \min(\dist(a, b), r+1)$. Thus, if the constraint $\sum_{k = 1}^n \left|\min(\dist(v_k,v_i),r+1)-\min(\dist(v_k,v_j),r+1)\right|x_k > 0$ is satisfied for all pairs of vertices $v_i,v_j$, then there exists at least one vertex $v_k$ with a landmark on it such that $\dist_r(v_k, x_i) \neq \dist_r(v_k, v_j)$. Therefore, $X$ is a distance-$r$ resolving set of $G$, and since the objective function minimizes the number of vertices in the set, the optimal value of Model \ref{model1} is the minimum size of a distance-$r$ resolving set of $G$, i.e., $\dim_r(G)$.
\end{proof}

To compute $\thr(G)$, we run Model \ref{model1} a total of $n+1$ times to compute $\dim_r(G)$ with $r=0,1,\ldots,n$, and then find the minimum of $r+\dim_r(G)$ over all values of $r$. We implemented this method in Python (the code is available in \cite{diao_LP})  and ran it for various graphs; we were  able to compute the metric dimension throttling numbers of graphs on up to several hundred vertices in several minutes. Some computational results are shown in Section \ref{sec:standard}. Analogous algorithms (with the appropriate modification to Model \ref{model1}) can be defined for edge metric dimension, mixed metric dimension, and any subset-variant of metric dimension.

\section{Throttling for subset-variants of metric dimension}\label{sec:general}

In this section, we obtain several general results that hold for all subset-variants of metric dimension throttling. We begin with a general lower bound on throttling numbers, with respect to the size of the set of subsets which must be resolved. We will show in Section~\ref{stdimextr} that this bound is sharp for standard metric dimension.

\begin{thm}\label{thm:minthx}
    For every subset-variant $\xdim$ of metric dimension and every graph $G$, if $N = |T_{\xdim}(G)|$, then  \[\thrx(G) = \Omega \left(\frac{\log{N}}{\log{\log{N}}}\right).\] 
\end{thm}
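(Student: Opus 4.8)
The plan is to establish a counting lower bound on the throttling number by observing that each subset $X \in T_{\xdim}(G)$ must receive a distinct $r$-truncated distance vector with respect to the landmark set $S$, and that the number of such vectors is severely limited when $|S|$ and $r$ are both small. Concretely, suppose $\thrx(G) = r + |S|$ where $S$ is a distance-$r$ $\xdim$-resolving set of size $k = \xdim_r(G)$. For each $X \in T_{\xdim}(G)$ and each $v \in S$, the quantity $\dist_r(X, v) = \min(\dist(X,v), r+1)$ takes a value in $\{0, 1, \dots, r+1\}$, so there are at most $(r+2)^k$ possible $r$-truncated distance vectors. Since the resolving condition forces these $N$ vectors to be pairwise distinct, we get $N \le (r+2)^k$, hence $k \ge \frac{\log N}{\log(r+2)}$ and therefore $\thrx(G) = r + k \ge r + \frac{\log N}{\log(r+2)}$.

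The remaining step is to minimize the right-hand side over $r \ge 1$. Writing $f(r) = r + \frac{\log N}{\log(r+2)}$, the plan is to show $f(r) = \Omega\!\left(\frac{\log N}{\log \log N}\right)$ for every positive integer $r$. There are two regimes. If $r \ge \frac{\log N}{\log \log N}$, then $f(r) \ge r$ already gives the bound. Otherwise $r < \frac{\log N}{\log \log N} \le \log N$ (for $N$ large), so $\log(r+2) \le \log(\log N + 2) = (1+o(1))\log\log N$, whence $\frac{\log N}{\log(r+2)} \ge (1-o(1))\frac{\log N}{\log\log N}$; since $f(r) \ge \frac{\log N}{\log(r+2)}$, this again yields the bound. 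In both regimes $f(r) = \Omega\!\left(\frac{\log N}{\log\log N}\right)$, and taking the minimum over $r$ gives $\thrx(G) = \Omega\!\left(\frac{\log N}{\log\log N}\right)$, as claimed. (One should also handle the trivial edge cases where $N$ is bounded, e.g. $N \le 2$, by noting the asymptotic statement is vacuous there.)

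I do not expect a serious obstacle here; the argument is a clean information-theoretic counting bound followed by an elementary optimization. The one point requiring a little care is the optimization over $r$: one must verify that the crossover between the "$r$ is large" term and the "$\frac{\log N}{\log(r+2)}$ is large" term genuinely occurs near $r \approx \frac{\log N}{\log\log N}$, so that neither term can be made small simultaneously. This is exactly the standard calculation showing that $\min_r \left(r + \frac{c}{\log r}\right) = \Theta\!\left(\frac{c}{\log c}\right)$, and it suffices to check the two cases above rather than differentiate. A minor bookkeeping issue is whether to use $r+1$ or $r+2$ as the base (depending on whether distances of $0$, i.e. $v \in X$, are counted); this only changes constants and is absorbed into the $\Omega$.
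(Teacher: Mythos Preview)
Your proposal is correct and follows essentially the same approach as the paper: both arguments use the counting bound $N \le (r+2)^k$ from the number of possible $r$-truncated distance vectors, then perform an elementary optimization. The only cosmetic difference is that the paper sets $x = \max(r,k)$ up front and solves $(x+2)^x \ge N$ directly, whereas you keep $r$ and $k$ separate and split into the two cases $r \gtrless \frac{\log N}{\log\log N}$; these are equivalent ways of carrying out the same minimization.
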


\begin{proof}
    Suppose that $\thrx(G)$ can be achieved with $b$ landmarks and radius at most $a$.
Let $x = \max(a, b)$ and observe the radius is at most $x$ and there are at most $x$ landmarks. 

Each $x$-truncated distance vector has at most $x$ coordinates and each coordinate has at most $x+2$ possibilities ($0,1, \dots, x+1$). Since each element of $T_{\xdim}$ has a unique vector, it is necessary that $(x+2)^x \ge N$, which implies that $(x+2)^{x+2} > N$. If we let $y^y=N$, then $y \log{y} = \log{N}$ and $y<\log N$, which implies $y = \frac{\log N}{\log y} > \frac{\log{N}}{\log{\log{N}}}$. Thus, $\thrx(G) = \Omega \left(\frac{\log{N}}{\log{\log{N}}}\right)$.
\end{proof}

We next give general upper and lower bounds on throttling numbers for subset-variants of metric dimension, with respect to the order of the graph. 

\begin{thm}\label{thm:min_max_gen}
    For all graphs $G$ of order $n$ and all subset-variants $\xdim$ of metric dimension, $\xdim(G) \le \thrx(G) \le n$. 
\end{thm}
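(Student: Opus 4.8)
The plan is to prove the two inequalities $\xdim(G) \le \thrx(G)$ and $\thrx(G) \le n$ separately, each of which follows almost immediately from the definitions once the right quantity is plugged in.

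For the lower bound $\xdim(G) \le \thrx(G)$, first I would note that for every positive integer $r$ we have $\xdim(G) \le \xdim_r(G)$. Indeed, any distance-$r$ $\xdim$-resolving set $S$ is in particular an $\xdim$-resolving set: if $\dist_r(X,v) \neq \dist_r(Y,v)$ for some $v \in S$, then since $\dist_r(X,v) = \min(\dist(X,v), r+1)$ and the two truncated values differ, the untruncated distances $\dist(X,v)$ and $\dist(Y,v)$ must also differ (truncation can only collapse distinct values, never separate equal ones). Hence $S$ resolves all distinct $X, Y \in T_{\xdim}(G)$ in the untruncated sense, so $\xdim(G) \le |S|$, and minimizing over $S$ gives $\xdim(G) \le \xdim_r(G)$. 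Since this holds for every $r \ge 1$, we get $\xdim(G) \le \xdim_r(G) + r$ for every $r$, and taking the minimum over $r$ yields $\xdim(G) \le \thrx(G)$.

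For the upper bound $\thrx(G) \le n$, I would exhibit a single value of $r$ for which $\xdim_r(G) + r \le n$. The natural choice is $r = 1$ together with the set $S = V(G)$ of all $n$ vertices, giving a value of $n + 1$ — so a small adjustment is needed. Instead take $S = V(G) \setminus \{w\}$ for an arbitrary vertex $w$, which has size $n-1$, and $r = 1$. I claim $S$ is a distance-$1$ $\xdim$-resolving set: for distinct $X, Y \in T_{\xdim}(G)$, pick any $u \in X \triangle Y$, say $u \in X \setminus Y$; if $u \neq w$ then $u \in S$ and $\dist_1(X, u) = 0$ while $\dist_1(Y, u) \ge 1$, so $u$ distinguishes them; if $u = w$, then since $X \neq Y$ are both nonempty subsets with $u \in X \setminus Y$, one checks that some other vertex of $X \triangle Y$ (or a neighbor argument) lies in $S$ — more carefully, it suffices to observe that at least one of $X, Y$ contains a vertex of $S$ whose distance-$1$ value to $X$ differs from that to $Y$, which holds because $X$ and $Y$ cannot both equal $\{w\}$. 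Thus $\xdim_1(G) \le n-1$, so $\thrx(G) \le \xdim_1(G) + 1 \le n$.

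The only genuinely delicate point is the edge case in the upper bound where the single "witness" element of $X \triangle Y$ happens to be the omitted vertex $w$; the cleanest fix is either to argue directly that $T_{\xdim}(G)$ contains at most one subset equal to $\{w\}$ (so at most one pair is affected, and for that pair we use a neighbor of $w$ or an element of the larger subset), or simply to choose $r = 1$ with the full vertex set only when $n$ is not tight and otherwise invoke that $K_n$-type degeneracies still satisfy $\thrx(G) \le n$ since $\xdim_0(G) \le n$ trivially. I expect this bookkeeping to be the main — and quite minor — obstacle; the core content is the monotonicity observation $\xdim(G) \le \xdim_r(G)$.
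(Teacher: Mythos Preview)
Your lower bound argument is correct and matches the paper's (with more detail spelled out).

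The upper bound argument has a genuine gap, and neither of your proposed patches closes it. The claim that $S=V(G)\setminus\{w\}$ is a distance-$1$ $\xdim$-resolving set fails in general: take the subset-variant with $T_{\xdim}(G)=2^{V(G)}$ and consider the pair $X=V(G)$, $Y=V(G)\setminus\{w\}$. Then $X\triangle Y=\{w\}$, and every $v\in S$ lies in both $X$ and $Y$, so $\dist_1(X,v)=\dist_1(Y,v)=0$ for all $v\in S$. No choice of $w$ avoids this (just take $Y=V(G)\setminus\{w\}$ for that particular $w$), and no ``neighbor of $w$'' or ``element of the larger subset'' argument helps, since every such vertex already lies in $Y$. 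In fact for this variant $\xdim_r(G)=n$ for every $r$, so $\xdim_r(G)+r\ge n+1$ whenever $r\ge 1$; your route through $r=1$ with $n-1$ landmarks cannot reach the bound $n$.

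The paper's approach is the one you mention at the very end and then set aside: take $r=0$ and $S=V(G)$. For distinct $X,Y\in T_{\xdim}(G)$, pick $v\in Y\setminus X$ (say); then $\dist_0(Y,v)=0$ while $\dist_0(X,v)\ge 1$, and $v\in S$, so $S$ is a distance-$0$ $\xdim$-resolving set of size $n$. This gives $\thrx(G)\le 0+n=n$ with no case analysis at all. The reason you avoided $r=0$ is presumably that the formal definition of $\thrx$ says ``over all positive integers $r$,'' but the paper's own proofs (this one, Theorem~\ref{thm:min_max}, and the computation in Section~\ref{sec:complexity}) clearly intend $r\ge 0$; you should read the definition that way.
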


\begin{proof}
    The lower bound follows by definition, since it is impossible to $\xdim$-resolve $G$ with fewer than $\xdim(G)$ landmarks. The upper bound follows from setting $r=0$ and placing landmarks on all of the vertices. Indeed, consider any two distinct subsets $S, T \subseteq V(G)$. Without loss of generality, we may assume that there is some element $v \in T$ which is not in $S$. Then, $v$ distinguishes $T$ from $S$, since $\dist(v, T) = 0$ and $\dist(v, S) > 0$.
    Hence $\thrx(G)\le0+\xdim_0(G)\le n$.
\end{proof}

Both the upper and lower bounds in Theorem~\ref{thm:min_max_gen} are sharp for mixed metric dimension: when $G = K_n$, the only vertex that distinguishes an edge $e=uv$ and vertex $u$ is $v$, so $\mdim(G) = \thrm(G) = n$.


Next, we prove a slightly stronger upper bound for subset-variants $\xdim$ in which the set of subsets to be resolved all have the same size; this includes standard and edge metric dimension.

\begin{thm}\label{thm:min_max}
    For all graphs $G$ of order $n$ and all subset-variants $\xdim$ of metric dimension where the elements of $T_{\xdim}(G)$ all have the same size, $\xdim(G)\le\thrx(G) \le n-1$. 
\end{thm}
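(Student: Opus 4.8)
The plan is to improve the trivial bound $\thrx(G) \le n$ from Theorem~\ref{thm:min_max_gen} by one, using the fact that all elements of $T_{\xdim}(G)$ have the same cardinality, say $s$. As in the proof of Theorem~\ref{thm:min_max_gen}, I would set $r = 0$ and place landmarks on vertices; the point is that with the uniform-size hypothesis, $n-1$ landmarks already suffice. First I would pick an arbitrary vertex $v_0 \in V(G)$ and let $S = V(G) \setminus \{v_0\}$ be the set of landmark locations, so $|S| = n-1$; it remains to show $S$ is a distance-$0$ $\xdim$-resolving set, which gives $\thrx(G) \le 0 + \xdim_0(G) \le n-1$.

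To check that $S$ resolves, take distinct $X, Y \in T_{\xdim}(G)$, both of size $s$. Since $X \neq Y$ and $|X| = |Y|$, neither $X \subseteq Y$ nor $Y \subseteq X$, so there is an element $x \in X \setminus Y$ and an element $y \in Y \setminus X$. With $r = 0$ we have $\dist_0(X, v) = 0$ if $v \in X$ and $\dist_0(X, v) = 1$ if $v \notin X$ (and likewise for $Y$), so a landmark at $v$ distinguishes $X$ from $Y$ exactly when $v$ lies in exactly one of $X, Y$ — that is, when $v \in X \triangle Y$. The only way $S$ fails to distinguish $X$ from $Y$ is if $X \triangle Y \subseteq \{v_0\}$, i.e. $X \triangle Y$ is empty or equal to $\{v_0\}$; the first is impossible since $X \neq Y$, and the second is impossible since $|X \triangle Y| \ge 2$ whenever $X \neq Y$ and $|X| = |Y|$ (the symmetric difference of two equal-size sets has even cardinality, hence is $0$ or at least $2$). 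So some landmark in $S$ distinguishes $X$ from $Y$, completing the argument.

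The only subtlety — and it is minor — is the boundary behavior when $T_{\xdim}(G)$ contains the empty set as an element with $\dist(\emptyset, v) = \infty$; but for the variants covered by the hypothesis (standard metric dimension with $s = 1$, edge metric dimension with $s = 2$) this never occurs, and more generally if all elements have the common size $s \ge 1$ then $\emptyset \notin T_{\xdim}(G)$, so the parity argument applies verbatim. If $s = 0$ then $T_{\xdim}(G)$ has at most one element and there is nothing to resolve, so the bound holds trivially. There is no real obstacle here; the statement is essentially the observation that resolving sets built from single-element symmetric-difference certificates can always afford to omit one vertex when the certificate sets are guaranteed to have size at least two.
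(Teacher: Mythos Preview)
Your proof is correct and takes essentially the same approach as the paper: set $r=0$, omit one vertex from the landmark set, and use the equal-size hypothesis to show the remaining $n-1$ landmarks resolve every pair. Your parity observation that $|X \triangle Y| \ge 2$ is a slightly cleaner packaging of what the paper does via a two-case split (whether the chosen element of $T \setminus S$ carries a landmark or is the omitted vertex, in which case the zero-counts of the two distance vectors differ), but the underlying content is identical.
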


\begin{proof}
    The lower bound follows by definition, since it is impossible to $\xdim$-resolve $G$ with fewer than $\xdim(G)$ landmarks. The upper bound follows from placing landmarks on all but one of the vertices. Indeed, consider any two distinct subsets $S, T \subseteq V(G)$ of size $k$. There is some element $v \in T$ which is not in $S$. We consider two cases. For the first case, suppose that $v$ has a landmark. In this case, $v$ distinguishes $T$ from $S$, since $\dist(v, T) = 0$ and $\dist(v, S) > 0$. For the second case, suppose that $v$ has no landmark. Then, the $\xdim$-vector of $S$ has $|S| = k$ zeroes, but the $\xdim$-vector of $T$ has $|T|-1 = k-1$ zeroes, so the landmarks distinguish $S$ and $T$ in this case as well. 
    Hence $\thrx(G)\le0+\xdim_0(G)\le n-1$.
\end{proof}

Both the upper and lower bounds in Theorem~\ref{thm:min_max} are sharp for standard and edge metric dimension: when $G = K_n$, the only vertices distinguishing two distinct vertices $u,v$ are $u$ and $v$, and the only vertices distinguishing edges $uw$ and $vw$ are $u$ and $v$. Thus, $\dim(G) =\thr(G)= n-1$ and $\edim(G) =\thre(G)= n-1$ .


Using Theorem~\ref{thm:min_max_gen}, we obtain a general corollary about throttling with respect to diameter.

\begin{cor}
    For all subset-variants $\xdim$ of metric dimension and all graphs $G$ of diameter $D$, we have $\thrx(G) \in [\xdim(G), \xdim(G)+D-1]$. In particular, if $D = O(\xdim(G))$, then we have $\thrx(G) = \Theta(\xdim(G))$.
\end{cor}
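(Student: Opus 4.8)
The lower bound $\xdim(G)\le\thrx(G)$ is already given by Theorem~\ref{thm:min_max_gen}, so the plan is to prove the upper bound $\thrx(G)\le\xdim(G)+D-1$ and then read off the asymptotic statement. The one thing to observe is that truncating at a radius just below the diameter changes nothing: if $G$ has diameter $D$, then for every nonempty $X\subseteq V(G)$ and every $v\in V(G)$ we have $\dist(X,v)=\min_{u\in X}\dist(u,v)\le D$, so taking $r=D-1$ gives $\dist_r(X,v)=\min(\dist(X,v),\,D)=\dist(X,v)$. Since every element of $T_{\xdim}(G)$ is nonempty for the variants in question, a set $S\subseteq V(G)$ is a distance-$(D-1)$ $\xdim$-resolving set of $G$ if and only if it is an $\xdim$-resolving set of $G$; in particular $\xdim_{D-1}(G)=\xdim(G)$.

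Plugging this into the definition of the throttling number, $\thrx(G)\le (D-1)+\xdim_{D-1}(G)=\xdim(G)+D-1$. Together with the lower bound this gives $\thrx(G)\in[\xdim(G),\xdim(G)+D-1]$. (When $D=1$ the choice $r=D-1=0$ is exactly the $r=0$ configuration already used in the proof of Theorem~\ref{thm:min_max_gen}, and the interval degenerates to the single value $\xdim(G)$, as it should for complete graphs.)

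For the second assertion, if $D=O(\xdim(G))$ along a family of graphs, then $\xdim(G)+D-1=O(\xdim(G))$, so the upper bound just proved yields $\thrx(G)=O(\xdim(G))$; combined with $\thrx(G)\ge\xdim(G)$ this gives $\thrx(G)=\Theta(\xdim(G))$. There is no real obstacle here: the entire content is the observation that at radius $D-1$ the truncated and ordinary distances coincide on all subsets that must be resolved, after which the bound is a one-line substitution into Theorem~\ref{thm:min_max_gen}.
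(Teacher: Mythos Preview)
Your proof is correct and is exactly the argument the paper has in mind: the paper states the corollary without proof, merely citing Theorem~\ref{thm:min_max_gen}, and your observation that truncating at $r=D-1$ leaves all distances among nonempty subsets unchanged (hence $\xdim_{D-1}(G)=\xdim(G)$) is the intended content of the upper bound. Your handling of the $D=1$ edge case and the nonemptiness assumption on elements of $T_{\xdim}(G)$ is appropriate for all the variants actually considered in the paper.
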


Clearly, the second part of the last corollary applies to all graphs $G$ of diameter $c$, for any constant $c$. However, it would not apply in the case that $G = P_n$ and $\xdim \in \{\dim, \edim, \mdim\}$, since $P_n$ has diameter $n-1$, $\dim(P_n) = \edim(P_n) = 1$, and $\mdim(P_n) = 2$ for all $n > 1$.

In the following theorem, we prove a general lower bound in terms of diameter for subset-variants $\xdim$ of metric dimension for which $V \subseteq T_{\xdim}(G)$ or $E \subseteq T_{\xdim}(G)$. Note that this applies to standard metric dimension, edge metric dimension, and mixed metric dimension.

\begin{thm}\label{diamd}
For all graphs $G$ of diameter $d$ and all subset-variants $\xdim$ of metric dimension for which $V \subseteq T_{\xdim}(G)$ or $E \subseteq T_{\xdim}(G)$, we have $\thrx(G) = \Omega(\sqrt{d})$. 
\end{thm}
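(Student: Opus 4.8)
The plan is to exploit a localization phenomenon along geodesics: the $r$-truncated distance to a landmark can only help distinguish vertices (or edges) that lie within a short interval of a shortest path, so a bounded number of landmarks together with a bounded radius cannot resolve a long geodesic.

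First I would fix a diametral geodesic $u = p_0, p_1, \dots, p_d = w$ with $\dist(u,w) = d$, and recall that then $\dist(p_i,p_j) = |i-j|$ for all $i,j$, since every subpath of a shortest path is shortest. Suppose the throttling number is realized as $\thrx(G) = \xdim_r(G) + r$ by a distance-$r$ $\xdim$-resolving set $S$ with $|S| = b := \xdim_r(G)$, so $\thrx(G) = b+r$. I would first handle the case $V \subseteq T_{\xdim}(G)$: each $p_i$ (as a singleton) lies in $T_{\xdim}(G)$, so the $r$-truncated distance vectors of $p_0,\dots,p_d$ with respect to $S$ must be pairwise distinct.

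The key step is a counting lemma. For each landmark $\ell \in S$, set $A_\ell = \{\, i \in \{0,\dots,d\} : \dist(p_i,\ell) \le r \,\}$. If $i,j \in A_\ell$, then $|i-j| = \dist(p_i,p_j) \le \dist(p_i,\ell) + \dist(\ell,p_j) \le 2r$, so $A_\ell$ lies in an interval of length $2r$ and $|A_\ell| \le 2r+1$. Now if an index $i$ belongs to no $A_\ell$, then every coordinate of the $r$-truncated distance vector of $p_i$ is $r+1$; since two distinct elements of $T_{\xdim}(G)$ cannot share a vector, at most one index lies outside $\bigcup_{\ell \in S} A_\ell$. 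Hence $d \le \bigl|\bigcup_{\ell \in S} A_\ell\bigr| \le \sum_{\ell \in S} |A_\ell| \le b(2r+1)$. Finally, assuming $n \ge 2$ (otherwise $d=0$ and the claim is trivial) we have $b \ge 1$, hence $b \le b^2$, so $d \le b(2r+1) = b + 2br \le b^2 + 2br + r^2 = (b+r)^2 = \thrx(G)^2$, which gives $\thrx(G) \ge \sqrt{d} = \Omega(\sqrt{d})$. For the case $E \subseteq T_{\xdim}(G)$, I would run the identical argument on the edges $e_i = p_{i-1}p_i$ for $i = 1,\dots,d$: the indices $i$ with $\dist(e_i,\ell)\le r$ again occupy an interval (of length $O(r)$, using that an endpoint of $e_i$ and an endpoint of $e_j$ are then within distance $2r$), all-$(r+1)$ vectors force at most one exceptional edge, and the same computation yields $\thrx(G) = \Omega(\sqrt d)$.

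The proof is short, and the only real idea is the localization lemma $|A_\ell| \le 2r+1$ — that a landmark farther than $r$ from $p_i$ contributes nothing to distinguishing $p_i$, and that the set of geodesic vertices within distance $r$ of a fixed landmark spans an interval of length at most $2r$. After that, the bookkeeping and the inequality $d \le (b+r)^2$ are routine. The only point requiring minor care is phrasing the edge version so that both hypotheses $V \subseteq T_{\xdim}(G)$ and $E \subseteq T_{\xdim}(G)$ are covered.
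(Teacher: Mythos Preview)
Your proof is correct and follows essentially the same approach as the paper's: both fix a diametral geodesic, observe that each landmark can be within distance $r$ of at most $O(r)$ vertices (or edges) of the geodesic by the triangle inequality, and that at most one vertex (or edge) can lie outside every landmark's $r$-neighborhood. The only cosmetic difference is that you bound the union directly ($d \le b(2r+1)$) and finish with $b(2r+1) \le (b+r)^2$, whereas the paper applies pigeonhole to find a single landmark covering at least $d/k$ geodesic vertices and then invokes AM--GM; these are equivalent counts.
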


\begin{proof}
Suppose that $G$ has diameter $d$. Let $P=v_1, \dots, v_{d+1}$ be a minimal path between two vertices $v_1$ and $v_{d+1}$ with distance $d$ in $G$. Suppose that $S$ is the set of vertices at which we place $k$ landmarks in $G$, and that the robot uses an $r$-sensor to detect landmarks. 

We consider two cases. First, suppose that $V \subseteq T_{\xdim}(G)$. At most one vertex among $v_1, \dots, v_{d+1}$ can be more than $r$ away from every landmark. By pigeonhole principle, there exists $u\in S$ such that at least $d/k$ vertices on $P$ are within distance $r$ of $u$. Let $v_i$ and $v_j$ be such vertices closest to either ends of $P$. Applying triangular inequality on vertices $v_i,v_j$, and $u$ we have $2r+1 \geq d/k$. Thus $k + r \geq k + d/2k-1/2=\Omega(\sqrt{d})$ by the arithmetic-geometric mean inequality. Hence $\thrx(G) = \Omega(\sqrt{d})$.

Now, suppose that $E \subseteq T_{\xdim}(G)$.
At most one edge of $P$ can be more than $r$ away from every landmark. By pigeonhole principle, there exists $u\in S$ such that at least $(d-1)/k$ edges of $P$ are within distance $r$ of $u$. Let $v_i$ and $v_j$ be endpoints of such edges closest to either ends of $P$. Applying triangular inequality on vertices $v_i,v_j$, and $u$ we have $2(r+1) \geq (d-1)/k$. Thus $k + r \geq k + (d-2k-1)/2k=\Omega(\sqrt{d})$ by the arithmetic-geometric mean inequality. Hence $\thrx(G) = \Omega(\sqrt{d})$.
\end{proof}

In Section~\ref{cyc_path}, we show that for standard metric dimension, Theorem \ref{diamd} is sharp up to a constant factor for paths, cycles, and spiders with a constant number of legs.

Next, we obtain some general characterizations of graphs with low throttling numbers for subset-variants of metric dimension.

\begin{thm}\label{thmjoined}
    Let $G$ be a graph and $\xdim$ be a subset-variant of metric dimension. Then, 
    \begin{itemize}
    \item $\thrx(G) = 0$ if and only if $|T_{\xdim}(G)| \le 1$;
    \item $\thrx(G) = 1$ if and only if $|T_{\xdim}(G)| = 2$.
    \end{itemize}
\end{thm}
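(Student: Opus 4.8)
The plan is to unwind the definitions and exploit one elementary observation: a single landmark $v$ observed with radius $r=0$ only reveals whether $\dist(X,v)=0$ (equivalently $v\in X$) or $\dist(X,v)\ge 1$, so the $0$-truncated distance vector of an element of $T_{\xdim}(G)$ with respect to $\{v\}$ takes only two values. Throughout I would treat $r=0$ as an admissible choice in the minimization defining $\thrx$, consistent with the proof of Theorem~\ref{thm:min_max_gen}, and I would record that $\dist(\emptyset,v)=\infty$, so that the empty subset still behaves like ``$v\notin X$'' under $\dist_0$.

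For the ``if'' directions: when $|T_{\xdim}(G)|\le 1$ there are no distinct pairs to resolve, so $\emptyset$ is a distance-$0$ $\xdim$-resolving set, giving $\xdim_0(G)=0$ and hence $\thrx(G)\le 0$; since $\thrx(G)$ is a sum of nonnegative terms, $\thrx(G)=0$. When $|T_{\xdim}(G)|=2$, write $T_{\xdim}(G)=\{X,Y\}$ with $X\neq Y$; then the symmetric difference $X\,\triangle\, Y$ is nonempty, so there is a vertex $v$ lying in exactly one of $X,Y$, whence $\dist_0(X,v)\neq\dist_0(Y,v)$. Thus $\{v\}$ is a distance-$0$ $\xdim$-resolving set and $\thrx(G)\le 1$; that $\thrx(G)\ge 1$ in this case is the contrapositive of the first bullet's ``only if'' direction, proved next.

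For the ``only if'' directions: if $\thrx(G)=0$, the minimizing pair $(r,\xdim_r(G))$ must be $(0,0)$, so $\emptyset$ is distance-$0$ $\xdim$-resolving, which is impossible as soon as $T_{\xdim}(G)$ contains two distinct elements; hence $|T_{\xdim}(G)|\le 1$. If $\thrx(G)=1$, I would first rule out the possibility $r=1$, $\xdim_1(G)=0$ (again $\emptyset$ cannot distinguish any pair), so the value $1$ is attained with $r=0$ and $\xdim_0(G)=1$; a single landmark $v$ then sorts the elements of $T_{\xdim}(G)$ into at most two classes according to whether $v$ belongs to them, and a distance-$0$ $\xdim$-resolving set must separate every distinct pair, forcing each class to be a singleton, so $|T_{\xdim}(G)|\le 2$. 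Combined with $\thrx(G)=1\neq 0$, which forces $|T_{\xdim}(G)|\ge 2$ by the first bullet, this yields $|T_{\xdim}(G)|=2$.

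There is no substantive obstacle here; the only care needed is bookkeeping about which values of $r$ are admissible in the minimization and correct handling of the degenerate case $X=\emptyset$ (relevant, for instance, to edge metric dimension of a graph with no edges).
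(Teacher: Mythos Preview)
Your proof is correct and follows essentially the same route as the paper's: both arguments reduce $\thrx(G)=0$ to the emptiness of landmark sets resolving $T_{\xdim}(G)$, and $\thrx(G)=1$ to a single landmark at radius $0$ producing at most two distance vectors, with the converse handled by picking $v$ in only one of the two subsets. Your version is simply more explicit about bookkeeping the paper leaves implicit---treating $r=0$ as admissible (the paper's definitions say ``positive integers $r$'' but its own proofs of Theorems~\ref{thm:min_max_gen} and~\ref{thm:min_max} use $r=0$), ruling out the decomposition $(r,\xdim_r)=(1,0)$, and noting the $X=\emptyset$ edge case---so there is no substantive difference in strategy.
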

\begin{proof}
    The only way $\thrx(G) = 0$ is if no landmarks are required to resolve the subsets in $T_{\xdim}(G)$. Thus, there cannot be two distinct subsets in $T_{\xdim}(G)$, so $|T_{\xdim}(G)| \le 1$.     
    Likewise, the only way $\thrx(G) = 1$ is if $G$ can be distance-$0$ resolved with a single landmark and $|T_{\xdim}(G)| > 1$. If $G$ can be distance-$0$ resolved with a single landmark, then there are only two possible distance vectors, so  $|T_{\xdim}(G)| = 2$. For the other direction, if $|T_{\xdim}(G)|=2$ then we can place a landmark on a vertex that belongs to only one subset in $T_{\xdim}(G)$.
\end{proof}




Theorem~\ref{thmjoined} implies the following characterizations. 
\begin{cor}
For any graph $G$,
\begin{enumerate}
\item
$\thr(G) = 0$ if and only if $G$ has at most one vertex, and $\thr(G) = 1$ if and only if $G$ has two vertices.
\item
$\thre(G) = 0$ if and only if $G$ has at most one edge, and $\thre(G) = 1$ if and only if $G$ has two edges.
\item
$\thrm(G) = 0$ if and only if $G$ has at most one vertex, and $\thre(G) = 1$ if and only if $G$ has two vertices and no edges.
\end{enumerate}
\end{cor}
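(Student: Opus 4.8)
The plan is to derive each of the three statements as a direct corollary of Theorem~\ref{thmjoined}, using the explicit descriptions of $T_{\xdim}(G)$ for the three variants. Recall from the excerpt that $T_{\dim}(G) = V(G)$, $T_{\edim}(G) = E(G)$, and $T_{\mdim}(G) = V(G) \cup E(G)$. Theorem~\ref{thmjoined} says that $\thrx(G) = 0$ iff $|T_{\xdim}(G)| \le 1$ and $\thrx(G) = 1$ iff $|T_{\xdim}(G)| = 2$, so in each case I just need to translate the cardinality condition on $T_{\xdim}(G)$ into a structural statement about $G$.

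First I would handle standard metric dimension. Here $|T_{\dim}(G)| = |V(G)| = n$, so $\thr(G) = 0 \iff n \le 1$ and $\thr(G) = 1 \iff n = 2$; this is immediate. Next, for edge metric dimension, $|T_{\edim}(G)| = |E(G)|$, so $\thre(G) = 0 \iff |E(G)| \le 1$ and $\thre(G) = 1 \iff |E(G)| = 2$; again immediate. The only slightly less mechanical case is mixed metric dimension, where $|T_{\mdim}(G)| = |V(G)| + |E(G)|$. Since any graph with an edge has at least two vertices, $|V(G)| + |E(G)| \le 1$ forces $|E(G)| = 0$ and $|V(G)| \le 1$, i.e. $G$ has at most one vertex; conversely a graph with at most one vertex has no edges and $|T_{\mdim}(G)| \le 1$. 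Likewise $|V(G)| + |E(G)| = 2$ forces either ($|V(G)| = 2$, $|E(G)| = 0$) or ($|V(G)| = 1$, $|E(G)| = 1$), and the latter is impossible for a simple graph, so the condition is exactly that $G$ has two vertices and no edges. (I note in passing that the corollary statement writes ``$\thre(G) = 1$'' where it should read ``$\thrm(G) = 1$''; the argument is for mixed metric dimension throttling.)

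There is essentially no obstacle here: the work is entirely in carefully unwinding the definition of $T_{\xdim}$ for each variant and checking the small-cardinality cases, with the only subtlety being that a simple graph cannot have one vertex and one edge. I would present the proof as three short paragraphs, one per item, each invoking Theorem~\ref{thmjoined} and then doing the one-line cardinality bookkeeping.
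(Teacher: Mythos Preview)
Your proposal is correct and matches the paper's approach: the paper simply states that Theorem~\ref{thmjoined} implies this corollary without further argument, and your expansion---substituting $T_{\dim}(G)=V(G)$, $T_{\edim}(G)=E(G)$, $T_{\mdim}(G)=V(G)\cup E(G)$ and doing the small-cardinality bookkeeping---is exactly the intended reasoning. Your observation about the typo in part~3 is also correct.
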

Finally, we show that all subset-variants of metric dimension are subtree-monotone.

\begin{prop}\label{thm:subtree}
    If $\xdim$ is a subset-variant of metric dimension and $G$ is a tree with subtree $G'$, then $\thrx(G') \le \thrx(G)$.
\end{prop}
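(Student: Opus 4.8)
The plan is to show that any throttling configuration for $G$ restricts to one for the subtree $G'$ without increasing either the number of landmarks or the radius. Concretely, suppose $\thrx(G) = r + \xdim_r(G)$ is achieved by a distance-$r$ $\xdim$-resolving set $S \subseteq V(G)$ with $|S| = \xdim_r(G)$. For each vertex $v \in S$, I would associate a vertex $\pi(v) \in V(G')$ defined as the vertex of $G'$ closest to $v$; since $G$ is a tree and $G'$ is a subtree, this closest vertex is unique (the path from $v$ to any vertex of $G'$ must pass through $\pi(v)$). Let $S' = \{\pi(v) : v \in S\} \subseteq V(G')$, so $|S'| \le |S|$. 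I would then argue that $S'$ is a distance-$r$ $\xdim$-resolving set for $G'$, which gives $\xdim_r(G') \le |S'| \le \xdim_r(G)$, hence $\thrx(G') \le r + \xdim_r(G') \le r + \xdim_r(G) = \thrx(G)$.

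The key technical step is the distance relation: for any vertex $v \in S$ and any $X \in T_{\xdim}(G')$ (viewed as a subset of $V(G') \subseteq V(G)$), every path in $G$ from $v$ to a vertex of $X$ passes through $\pi(v)$, so $\dist_G(X, v) = \dist_G(X, \pi(v)) + \dist_G(\pi(v), v)$, and moreover $\dist_G(X, \pi(v)) = \dist_{G'}(X, \pi(v))$ because $G'$ is an induced subtree (no shortcuts through $G \setminus G'$). Writing $c(v) = \dist_G(\pi(v), v)$, we get $\dist_G(X, v) = \dist_{G'}(X, \pi(v)) + c(v)$ for all $X \in T_{\xdim}(G')$. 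Now I need the truncated version: $\dist_{r,G}(X, v) = \min(\dist_{G'}(X,\pi(v)) + c(v),\, r+1)$. The point is that $t \mapsto \min(t + c(v), r+1)$, while not injective, is nondecreasing; so if $\dist_{r,G}(X,v) \ne \dist_{r,G}(Y,v)$ for some $v$, then $\dist_{G'}(X,\pi(v)) + c(v) \ne \dist_{G'}(Y,\pi(v)) + c(v)$ with at least one side $\le r$, which forces $\dist_{G'}(X,\pi(v)) \ne \dist_{G'}(Y,\pi(v))$ and in fact $\dist_{r,G'}(X,\pi(v)) \ne \dist_{r,G'}(Y,\pi(v))$ (both truncations happen at a threshold $\ge r+1 - c(v)$, and one of the two $G'$-distances is $\le r - c(v) \le r$, so it is not truncated; if the other is truncated it is strictly larger). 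Thus $\pi(v) \in S'$ distinguishes $X$ and $Y$ in $G'$ at radius $r$.

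So, given distinct $X, Y \in T_{\xdim}(G')$, I would: (i) view them as distinct elements of $T_{\xdim}(G)$ — here I am implicitly using that $T_{\xdim}$ behaves consistently under taking subtrees, i.e. $T_{\xdim}(G') \subseteq T_{\xdim}(G)$ as subsets of $V(G)$, which holds for $\dim$ ($V(G') \subseteq V(G)$), $\edim$ ($E(G') \subseteq E(G)$), and $\mdim$ ($V(G') \cup E(G') \subseteq V(G) \cup E(G)$); (ii) pick $v \in S$ with $\dist_{r,G}(X,v) \ne \dist_{r,G}(Y,v)$; (iii) apply the displayed distance identity to conclude $\pi(v)$ distinguishes $X$ from $Y$ at radius $r$ in $G'$. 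Since $X,Y$ were arbitrary, $S'$ is distance-$r$ $\xdim$-resolving for $G'$, completing the proof.

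The main obstacle is the truncation bookkeeping in the key step: the map $t \mapsto \min(t+c(v), r+1)$ is not injective, so "$v$ distinguishes $X,Y$ in $G$" does not immediately transfer. The resolution is the monotonicity observation above — one must carefully check that whenever the two truncated $G$-distances from $v$ differ, the corresponding (truncated) $G'$-distances from $\pi(v)$ also differ, using that at least one of them is un-truncated because it is at most $r$. A secondary, more bureaucratic point is making precise the claim $T_{\xdim}(G') \subseteq T_{\xdim}(G)$; for the three named variants this is immediate, and one may simply state it as the standing hypothesis on how a subset-variant restricts to subtrees (or verify it case by case for $\dim, \edim, \mdim$, which are the variants of interest in this paper).
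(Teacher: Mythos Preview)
Your proposal is correct and follows exactly the paper's approach: take an optimal distance-$r$ $\xdim$-resolving set $S$ for $G$, project each landmark to its nearest vertex in $G'$, and observe that the resulting set $S'$ with $|S'|\le|S|$ is distance-$r$ $\xdim$-resolving for $G'$. In fact your write-up is considerably more careful than the paper's one-line proof: you spell out the truncation bookkeeping (which the paper omits entirely) and you explicitly flag the implicit hypothesis $T_{\xdim}(G')\subseteq T_{\xdim}(G)$, which the paper does not mention but which is needed for the argument to go through for a general subset-variant.
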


\begin{proof}
    Let $S$ be a distance-$r$ $\xdim$-resolving set for $G$ of size $\xdim_r(G)$, where $r+\xdim_r(G) = \thrx(G)$. Let $S'$ be the set of vertices obtained from $S$ by replacing every vertex $v \in S$ that is not in $G'$ with the vertex in $G'$ that is closest to $v$. The resulting vertex set $S'$ must be a distance-$r$ $\xdim$-resolving set for $G'$, and $|S'| \le |S|$, so we have $\thrx(G') \le \thrx(G)$.
\end{proof}

\section{Standard metric dimension throttling}\label{sec:standard}

In this section, we focus on throttling for standard metric dimension. In particular, we obtain a number of extremal bounds and characterizations, as well as exact or close-to-exact values of the throttling numbers of special families of graphs.  

\subsection{Extremal bounds}
\label{stdimextr}

We begin by restating Theorem~\ref{thm:minthx} in terms of standard metric dimension and showing that it is sharp up to a constant factor in this case.

\begin{thm}\label{minth}
For all graphs $G$ of order $n$,  $\thr(G) = \Omega\left(\frac{\log{n}}{\log{\log{n}}}\right)$.
\end{thm}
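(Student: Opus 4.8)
The plan is to derive Theorem~\ref{minth} directly from the general lower bound in Theorem~\ref{thm:minthx}, specialized to $\xdim = \dim$. The key observation is simply that when $\xdim = \dim$ we have $T_{\xdim}(G) = V(G)$, so $N = |T_{\dim}(G)| = n$. Theorem~\ref{thm:minthx} then immediately gives $\thr(G) = \Omega\!\left(\frac{\log N}{\log\log N}\right) = \Omega\!\left(\frac{\log n}{\log\log n}\right)$, which is exactly the claimed bound. So at the level of logic there is essentially nothing to do beyond invoking the earlier theorem with the correct value of $N$.

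If instead one wanted a self-contained argument (not relying on Theorem~\ref{thm:minthx}), I would reproduce the counting argument: suppose $\thr(G)$ is achieved with $b$ landmarks and radius $r$, and set $x = \max(b, r)$. Every vertex of $G$ must receive a distinct $r$-truncated distance vector with respect to the landmark set; such a vector has at most $x$ coordinates, each taking one of at most $x+2$ values in $\{0, 1, \dots, x+1\}$. Hence $(x+2)^x \ge n$, and a fortiori $(x+2)^{x+2} > n$. Comparing with the function $y \mapsto y^y$: if $y^y = n$ then $y = \frac{\log n}{\log y}$ and $y < \log n$, so $y > \frac{\log n}{\log\log n}$; and $(x+2)^{x+2} > n = y^y$ forces $x + 2 > y$, hence $x = \Omega\!\left(\frac{\log n}{\log\log n}\right)$. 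Since $\thr(G) = b + r \ge x$, the bound follows.

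There is really no serious obstacle here — the content was already carried out in the proof of Theorem~\ref{thm:minthx}, and this theorem is just the $T_{\dim}(G) = V(G)$ instance of it. The only point requiring a sentence of care is confirming $N = n$ (as opposed to $n-1$ or some off-by-one quantity that would not affect the asymptotics anyway), which is immediate from the convention $T_{\dim}(G) = V(G)$. The sharpness claim ("sharp up to a constant factor") is a separate matter and would be established by the matching construction elsewhere in Section~\ref{stdimextr}; for the lower-bound statement of Theorem~\ref{minth} itself, citing Theorem~\ref{thm:minthx} with $N = n$ suffices.
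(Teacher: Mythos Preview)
Your proposal is correct and matches the paper's approach exactly: the paper presents Theorem~\ref{minth} as a restatement of Theorem~\ref{thm:minthx} for standard metric dimension (with $T_{\dim}(G)=V(G)$, hence $N=n$) and gives no separate proof. Your optional self-contained argument also just reproduces the counting in the proof of Theorem~\ref{thm:minthx}, so there is nothing to add.
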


\begin{thm}
There exist graphs $H_{n}$ of order $n$ with $\thr(H_{n}) = O\left(\frac{\log{n}}{\log{\log{n}}}\right)$.
\end{thm}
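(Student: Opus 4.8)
The plan is to complement the lower bound of Theorem~\ref{minth} by exhibiting graphs that are essentially extremal for the trade-off between diameter and metric dimension. The key observation is that if $G$ is connected with diameter $D$, then $\dist_r(u,v)=\dist(u,v)$ for all $u,v$ whenever $r\ge D$, so a distance-$r$ resolving set is just a resolving set; hence $\dim_D(G)=\dim(G)$, and taking $r=D$ gives $\thr(G)\le \dim(G)+D$. So it suffices to produce, for each $n$, a connected graph $H_n$ of order $n$ whose diameter $D$ and metric dimension $k$ are both small, subject only to the unavoidable counting constraint $(D+2)^k\gtrsim n$ coming from the proof of Theorem~\ref{minth}.

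I would obtain such graphs from the sharpness side of the diameter/metric-dimension bound: by \cite{hernando}, for every $D$ and $k$ there is a graph of diameter $D$ and metric dimension $k$ whose order is within a constant factor of the maximum possible value, which is $(1+o(1))\bigl(\tfrac{2D}{3}\bigr)^{k}$; moreover such extremal graphs are built by realizing a large family of distance profiles, so deleting some of those profiles (together with their private auxiliary vertices) keeps diameter at most $D$ and metric dimension at most $k$ and lets one tune the order to exactly $n$. Then set $D=\bigl\lceil \log n/(\log\log n)^2\bigr\rceil$ and $k=\bigl\lceil C\log n/\log\log n\bigr\rceil$ for a suitable absolute constant $C$. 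A one-line estimate gives $\log\bigl((2D/3)^{k}\bigr)=k\bigl(\log D+O(1)\bigr)=\Theta\bigl(\tfrac{\log n}{\log\log n}\cdot\log\log n\bigr)=\Theta(\log n)$, which exceeds $\log n$ once $C$ is large enough, so the maximum order is at least $n$ and $H_n$ exists. Finally $\thr(H_n)\le \dim(H_n)+D\le k+D=O\bigl(\tfrac{\log n}{\log\log n}\bigr)+O\bigl(\tfrac{\log n}{(\log\log n)^2}\bigr)=O\bigl(\tfrac{\log n}{\log\log n}\bigr)$, which together with Theorem~\ref{minth} pins the minimum throttling number at $\Theta\bigl(\tfrac{\log n}{\log\log n}\bigr)$.

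I expect the real work to be in the extremal construction itself, whether cited or reproduced. The naive attempt---a hub $z$ with $k$ pendant paths of length about $D/3$ ending in landmarks $\ell_1,\dots,\ell_k$ (so $\dist(\ell_i,\ell_j)\approx 2D/3$), together with, for each profile $\mathbf a=(a_1,\dots,a_k)$ in a suitable subset of $\{D/3,\dots,2D/3\}^{k}$, a vertex $v_{\mathbf a}$ joined to $\ell_i$ by a path of length $a_i$---runs into two issues. First, the triangle inequality forces $a_i+a_j\ge\dist(\ell_i,\ell_j)$, which is exactly why only profiles whose coordinates lie in an interval of length about $2D/3$ are realizable (hence the base $2D/3$ rather than $D$). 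Second, and more delicate, the auxiliary internal vertices of the connecting paths must themselves receive pairwise distinct distance vectors with respect to $\{\ell_1,\dots,\ell_k\}$; naively many of these collide (in particular all internal vertices near a common $\ell_i$ look alike to the landmarks), so the profile family must be chosen carefully---for instance restricting to profiles of fixed coordinate sum kills one collision type---and the graph may need to be reorganized so that the bulk vertices are genuinely ``between'' the landmarks rather than strung along bare paths. Carrying out this verification cleanly while keeping the order at $\Theta\bigl((2D/3)^{k}\bigr)$ is the technical heart of the argument, and is the reason I would lean on \cite{hernando} for it.
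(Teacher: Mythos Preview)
Your proposal is correct and follows essentially the same approach as the paper: both invoke the Hernando--Mora--Pelayo--Seara--Wood extremal construction to obtain a connected graph with prescribed diameter $D$ and metric dimension $k$ of order roughly $(2D/3)^k$, and both then bound $\thr(G)\le \dim(G)+D$ by choosing $D$ and $k$ of order $\log n/\log\log n$. The paper simply takes $D=\tfrac{3}{2}\cdot\tfrac{\log n}{\log\log n}$ and $k=2\cdot\tfrac{\log n}{\log\log n}$ and checks $(2D/3)^k\ge n^{2-o(1)}$, whereas you take an asymmetric pair $D\asymp\log n/(\log\log n)^2$, $k\asymp\log n/\log\log n$; either choice works. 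Your care about tuning the order to exactly $n$ and your discussion of the internal difficulties of the extremal construction go beyond what the paper writes out (it simply cites \cite{hernando}), but this is not a different method, just more explicit bookkeeping.
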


\begin{proof}
Hernando et al. \cite{hernando} proved for all positive integers $D$ and $k$ that there exist connected graphs of diameter $D$ and metric dimension $k$ with order 
\[\left(\left\lfloor\frac{2D}{3} \right\rfloor+1\right)^{k} + k \sum_{i = 1}^{\lceil D/3 \rceil} (2i-1)^{k-1} > \left(\frac{2D}{3}\right)^{k}.\] Let $D = \frac{3}{2} \left(\frac{\log{n}}{\log{\log{n}}}\right)$ and $k = 2 \left(\frac{\log{n}}{\log{\log{n}}}\right)$. Then 

\bea
\log{\left(\frac{2D}{3}\right)^{k}} &=& k \log{\left(\frac{2D}{3}\right)}\\
 &=& \frac{2\log{n}}{\log{\log{n}}} \left(\log{\log{n}}-\log{\log{\log{n}}}\right) \\
 &=& 2\left(1-o(1)\right)\log{n}.
\eea

 Thus there exist connected graphs of order at least $n^{2-o(1)}$ with diameter $\frac{3}{2} \left(\frac{\log{n}}{\log{\log{n}}}\right)$ and metric dimension $2 \left(\frac{\log{n}}{\log{\log{n}}}\right)$. By the definition of metric dimension throttling, this implies the result.
\end{proof}

\begin{cor}
    The minimum possible metric dimension throttling number of any graph of order $n$ is $\Theta\left(\frac{\log{n}}{\log{\log{n}}}\right)$.
\end{cor}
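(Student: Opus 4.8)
The plan is to obtain this purely by combining the two results that immediately precede it, so no new argument is needed. Theorem~\ref{minth} supplies the lower half: every graph $G$ of order $n$ satisfies $\thr(G) = \Omega\left(\frac{\log n}{\log\log n}\right)$, hence in particular $\min_{|V(G)| = n} \thr(G) = \Omega\left(\frac{\log n}{\log\log n}\right)$. The theorem stated just before this corollary supplies the matching upper half: for each $n$ it exhibits a graph $H_n$ of order $n$ with $\thr(H_n) = O\left(\frac{\log n}{\log\log n}\right)$, so $\min_{|V(G)| = n} \thr(G) = O\left(\frac{\log n}{\log\log n}\right)$. Putting the two estimates together gives $\min_{|V(G)| = n} \thr(G) = \Theta\left(\frac{\log n}{\log\log n}\right)$, which is precisely the claim; I would write the proof as essentially this one sentence.

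The only point that is not completely formal, and hence the only (mild) obstacle, is checking that the upper-bound construction really yields the stated rate for every order. The Hernando et al.\ graphs built with parameters $D = \frac{3}{2}\cdot\frac{\log n}{\log\log n}$ and $k = 2\cdot\frac{\log n}{\log\log n}$ have throttling number at most $D + k = O\left(\frac{\log n}{\log\log n}\right)$ but order $N = n^{2-o(1)}$ rather than exactly $n$; to recast the bound in terms of the true order $N$ one uses $\log N = (2-o(1))\log n$ and $\log\log N = \log\log n + O(1)$, which give $\frac{\log n}{\log\log n} = \Theta\!\left(\frac{\log N}{\log\log N}\right)$. One also wants the attainable orders to be dense enough as the integer parameters vary (or, failing that, that the construction can be padded without inflating the throttling number) so that a graph of throttling number $O\!\left(\frac{\log m}{\log\log m}\right)$ exists for every order $m$. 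This reindexing is the entire ``work'' here; the mathematical substance of the corollary is carried by the two cited theorems.
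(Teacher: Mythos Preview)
Your proposal is correct and matches the paper's approach exactly: the paper states this corollary with no proof at all, treating it as an immediate consequence of the two preceding theorems (the $\Omega$ bound from Theorem~\ref{minth} and the $O$ bound from the Hernando et al.\ construction). Your discussion of the reindexing from $n$ to the actual order $N$ and the density of attainable orders is in fact more careful than what the paper provides.
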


Similarly, we determine the minimum possible metric dimension throttling number of any graph with $m$ edges.

\begin{thm}
    The minimum possible metric dimension throttling number of any graph with $m$ edges is $\Theta\left(\frac{\log{m}}{\log{\log{m}}}\right)$.
\end{thm}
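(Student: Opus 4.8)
The plan is to prove matching $\Omega$ and $O$ bounds on the minimum throttling number over graphs with $m$ edges, in direct analogy with the order-$n$ case, which we already have in hand from Theorems~\ref{thm:minthx}, \ref{minth}, and the construction of Hernando et al.

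For the lower bound $\thr(G) = \Omega\left(\frac{\log m}{\log\log m}\right)$, I would apply Theorem~\ref{thm:minthx} with $\xdim = \dim$, so $N = |T_{\dim}(G)| = n$. If the graph has $m$ edges, then $n \ge \sqrt{m}$ (since $m \le \binom{n}{2} < n^2$), hence $\log n \ge \frac{1}{2}\log m$ and also $\log\log n \le \log\log m$ for $m$ large, so $\frac{\log n}{\log\log n} = \Omega\left(\frac{\log m}{\log\log m}\right)$. Thus $\thr(G) = \Omega\left(\frac{\log n}{\log\log n}\right) = \Omega\left(\frac{\log m}{\log\log m}\right)$. (If one worries about isolated vertices making $n$ much larger than $\sqrt m$, that only helps: a larger $N$ makes the bound stronger, and in any case $n \ge \sqrt m$ always holds once there is at least one edge.)

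For the upper bound, I would use the same Hernando et al. graphs $H_n$ already invoked in the preceding theorem: connected graphs of order at least $n^{2-o(1)}$ with diameter $\frac{3}{2}\cdot\frac{\log n}{\log\log n}$ and metric dimension $2\cdot\frac{\log n}{\log\log n}$, which give $\thr \le \dim + D - 1 = O\left(\frac{\log n}{\log\log n}\right)$ by the corollary to Theorem~\ref{thm:min_max_gen}. Since these graphs are connected of order $N' \ge n^{2-o(1)}$, their number of edges $m'$ satisfies $N' - 1 \le m' \le \binom{N'}{2}$, so $\log m' = \Theta(\log N') = \Theta(\log n)$ and hence $\log\log m' = (1+o(1))\log\log n$. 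Therefore $\thr(H_{N'}) = O\left(\frac{\log n}{\log\log n}\right) = O\left(\frac{\log m'}{\log\log m'}\right)$. To get a graph with exactly (or at least asymptotically) $m$ edges for an arbitrary target $m$, I would pick the member of the Hernando family whose edge count is closest to $m$ from below and, if necessary, pad with a bounded-diameter attachment or a few extra vertices/edges that do not increase $\dim$ or $D$ by more than a constant factor — the family is dense enough in the integers (edge counts grow polynomially in the diameter/dimension parameters) that this introduces only a $(1+o(1))$ distortion.

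The main obstacle is the bookkeeping in the upper bound: Theorems~\ref{minth} and its companion are phrased as "there exist graphs of order $n$," so I need to translate "there exist graphs of order $n$ with $\thr = O(\log n/\log\log n)$" into "for every $m$, there exists a graph with (about) $m$ edges and $\thr = O(\log m/\log\log m)$." This requires checking that the Hernando construction's edge counts are sufficiently fine-grained (or can be made so by attaching a small gadget) to hit any target edge count up to a constant factor, and that such padding does not blow up the diameter or metric dimension. Everything else — the inequalities relating $m$, $n$, and their iterated logarithms — is routine. I would state the result as $\Theta\left(\frac{\log m}{\log\log m}\right)$ and note that the lower bound holds for every graph with $m$ edges, while the upper bound is witnessed by (a slightly padded version of) the Hernando graphs.
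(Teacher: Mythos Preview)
Your proposal is correct and follows essentially the same approach as the paper: the lower bound comes from $n \ge \sqrt{m}$ combined with Theorem~\ref{minth}, and the upper bound comes from observing that the Hernando et al.\ graphs are connected (so their edge count $m'$ satisfies $\log m' = \Theta(\log N')$) and hence already witness $\thr = O(\log m'/\log\log m')$. Two minor remarks: your intermediate claim ``$\log\log n \le \log\log m$'' is not true in general, but as your parenthetical correctly notes, larger $n$ only strengthens the lower bound, so this is harmless; and your worry about padding to hit an \emph{exact} target $m$ is a legitimate technical point that the paper itself leaves implicit.
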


\begin{proof}
    If $G$ has $m$ edges, then $G$ has order $\Omega(\sqrt{m})$, so $\thr(G) = \Omega\left(\frac{\log{m}}{\log{\log{m}}}\right)$.
    For the upper bound, note that the construction from Hernando et al. \cite{hernando} is connected, so there exist connected graphs with at least $n^{2-o(1)}$ edges, diameter $\frac{3}{2} \left(\frac{\log{n}}{\log{\log{n}}}\right)$, and metric dimension $2 \left(\frac{\log{n}}{\log{\log{n}}}\right)$.
\end{proof}





Theorem~\ref{thmjoined} characterized the graphs with a small metric dimension throttling number. We now characterize the graphs with a large metric dimension throttling number.
Given three distinct vertices $x,y,z$, we say that vertices $x$ and $y$ form a \textit{good pair} if $N(x)-\{y,z\}=N(y)-\{x,z\}$. 
\begin{lem}\label{lem:n-1}
Let $G$ be a graph of order $n$. If $\thr(G)=n-1$, then for any three distinct vertices $x$, $y$, and $z$ of $G$ there exist two distinct vertices $x$ and $y$ such that $N(x)-\{y,z\}=N(y)-\{x,z\}$.
\end{lem}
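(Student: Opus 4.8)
The plan is to prove the contrapositive: if $G$ (of order $n \ge 4$) has three distinct vertices $x, y, z$ such that no two of them form a good pair with respect to the third, then $\thr(G) \le n-2 < n-1$. The cases $n \le 3$ can be disposed of separately by direct inspection, since there are only finitely many graphs of order at most $3$ and each of them satisfies the conclusion. So fix a ``bad'' triple $\{x,y,z\}$; by definition all three equalities $N(x)-\{y,z\}=N(y)-\{x,z\}$, $N(x)-\{y,z\}=N(z)-\{x,y\}$, and $N(y)-\{x,z\}=N(z)-\{x,y\}$ fail.

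I will show that $S := V(G)\setminus\{x,y,z\}$ is a distance-$1$ resolving set of $G$; this gives $\dim_1(G) \le |S| = n-3$, hence $\thr(G) \le \dim_1(G)+1 \le n-2$. To check that $S$ is distance-$1$ resolving, it suffices to distinguish every pair of vertices. Any pair containing some $p \in S$ is distinguished by $p$, because $\dist_1(p,p)=0$ while $\dist_1(p,q)\ge 1$ for $q\ne p$. It remains to handle the three pairs inside $\{x,y,z\}$. For a pair $\{u,v\}\subseteq\{x,y,z\}$ with third vertex $w$, a landmark $\ell\in S$ has $\dist_1(\ell,u),\dist_1(\ell,v)\in\{1,2\}$ (since $\ell\notin\{u,v\}$), so $\ell$ distinguishes $u$ from $v$ exactly when $\ell$ is adjacent to precisely one of $u,v$, i.e.\ $\ell\in N(u)\triangle N(v)$. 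Thus $\{u,v\}$ is resolved by $S$ iff $(N(u)\triangle N(v))\setminus\{x,y,z\}\ne\emptyset$. Using $u\notin N(u)$, $v\notin N(v)$, and the identity $(A\triangle B)\setminus C=(A\setminus C)\triangle(B\setminus C)$, this set equals $(N(u)-\{v,w\})\triangle(N(v)-\{u,w\})$, which is nonempty exactly when $N(u)-\{v,w\}\ne N(v)-\{u,w\}$ --- that is, exactly when $\{u,v\}$ is not a good pair with respect to $w$. Since the triple is bad, all three pairs inside it are resolved, so $S$ is distance-$1$ resolving and the contrapositive follows.

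The proof is mostly careful bookkeeping; the one point requiring attention is this last translation --- via the symmetric-difference identity and the fact that $u\notin N(u)$ --- between the defining condition of a good pair and the inability of $V(G)\setminus\{x,y,z\}$ to resolve a pair within the triple at radius $1$, together with remembering to rule out the small cases $n\le 3$ where $S$ would be too small to be a resolving set at all.
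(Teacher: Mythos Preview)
Your proof is correct and follows exactly the same approach as the paper: assume a bad triple exists and show that $V(G)\setminus\{x,y,z\}$ is a distance-$1$ resolving set, giving $\thr(G)\le (n-3)+1=n-2$. The paper states this in one sentence without justification; you have supplied the details (the symmetric-difference computation and the separate treatment of $n\le 3$), but the argument is the same.
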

\begin{proof}
   Suppose that for some three distinct vertices $x$, $y$, and $z$ we have $N(x)-\{y,z\}\ne N(y)-\{x,z\}$, $N(z)-\{x,y\}\ne N(y)-\{x,z\}$, and $N(z)-\{x,y\}\ne N(x)-\{z,y\}$. Then $V(G)-\{x,y,z\}$ is a distance-$1$ resolving set of $G$ and $\thr(G)\le1+n-3=n-2$.
\end{proof}

\begin{thm}\label{thm:th-n-1}
Let $G$ be a graph of order $n$ with $n\ge3$, then $\thr(G)=n-1$ if and only if among every three distinct vertices $x$, $y$, and $z$ of $G$ there exist two distinct vertices $x$ and $y$ such that $N(x)-\{y,z\}=N(y)-\{x,z\}$.
\end{thm}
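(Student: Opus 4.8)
The plan is to prove both directions of the equivalence. The forward direction ($\thr(G) = n-1$ implies the neighborhood condition on every triple) is exactly Lemma~\ref{lem:n-1}, so that half is already in hand. The real work is the converse: assuming that every three distinct vertices $x,y,z$ contain a ``good pair'' $x,y$ with $N(x)-\{y,z\} = N(y)-\{x,z\}$, show $\thr(G) = n-1$. Since Theorem~\ref{thm:min_max} already gives $\thr(G) \le n-1$, it suffices to prove $\thr(G) \ge n-1$.

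To prove the lower bound I would argue by contradiction: suppose $\thr(G) \le n-2$, realized by some distance-$r$ resolving set $S$ with $|S| + r \le n-2$. First I would reduce to the case $r \le 1$. If $r \ge 2$, then $|S| \le n-4$, and I want to show this is impossible, or more cleanly, I would show that whenever $r \ge 1$ the hypothesis forces too many vertices into $S$. The cleaner route: if $r = 0$ then $|S| \le n-2$, so at least two vertices $x,y$ lie outside $S$; being distance-$0$ resolved means some landmark $v$ has $\dist(v,x) = 0 \ne \dist(v,y)$ or vice versa, i.e.\ $v \in \{x,y\}$, contradicting $x,y \notin S$. So $r \ge 1$ and $|S| \le n-3$, meaning at least three vertices $x,y,z$ lie outside $S$. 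Now apply the hypothesis to this triple: there is a good pair, say $x,y$, with $N(x) - \{y,z\} = N(y) - \{x,z\}$. I would then show $x$ and $y$ are not distinguished by any vertex of $S$ under the $r$-truncated metric, contradicting that $S$ is distance-$r$ resolving. For this I need: for every $v \in S$, $\dist_r(v,x) = \dist_r(v,y)$.

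The key step — and the main obstacle — is verifying that a good pair $x,y$ with $x,y,z \notin S$ really cannot be distinguished by a landmark in $S$. The subtlety is that $N(x)$ and $N(y)$ may differ precisely in whether they contain $y$, $x$, or $z$; the condition only equates the neighborhoods after deleting $\{x,y,z\}$. So I must handle shortest paths from a landmark $v \in S$ to $x$ that might pass through $z$ (or through $y$, or through $x$). The argument: take any $v \in S$ and a shortest $v$–$x$ path; its second-to-last vertex $w$ is a neighbor of $x$. If $w \notin \{y,z\}$ then $w \in N(x) - \{y,z\} = N(y)-\{x,z\}$, so $w$ is also a neighbor of $y$, giving $\dist(v,y) \le \dist(v,w) + 1 = \dist(v,x)$; symmetrically $\dist(v,x) \le \dist(v,y)$, hence equality. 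The remaining cases are where every shortest $v$–$x$ path has penultimate vertex in $\{y,z\}$. If it is $y$, then $x$ and $y$ are adjacent and $\dist(v,x) = \dist(v,y) + 1$; I then need to rule this out or show it still can't help distinguish — here I'd use that $z \notin S$ too and look more carefully, possibly invoking that $\dist(v,y) \le \dist(v,z) + 1$ or re-examining which of the three pairs among $\{x,y,z\}$ the hypothesis actually hands us (the hypothesis is applied to the specific unordered triple, and I may get to choose among the pairs, or I may need to run the argument for whichever good pair arises). The cleanest fix is: among $x,y,z \notin S$, the hypothesis gives \emph{some} good pair; whichever one it is, say $a,b$ with third vertex $c$, I show no $v\in S$ distinguishes $a$ from $b$. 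Since $v \ne a,b,c$, any shortest $v$–$a$ path has penultimate vertex $w \ne v$; if $w \notin \{a,b,c\}$ the symmetric argument above closes it; if $w \in \{b,c\}$ I argue: $w \ne c$ is impossible to exploit directly, so I instead observe $w = b$ forces $a \sim b$ and then compare via $c$, or simply note that I can always pick the shortest path avoiding $c$ since $c \notin S$ lies strictly between only if... — this is the delicate bookkeeping I expect to spend the most care on. I would also double-check the edge case $r = 1$ versus $r \ge 2$ is uniformly handled (it is, since truncation only makes distances \emph{more} equal, so $\dist(v,a) = \dist(v,b)$ for all $v \in S$ immediately yields $\dist_r(v,a) = \dist_r(v,b)$). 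Assembling these pieces gives the contradiction and hence $\thr(G) \ge n-1$, completing the proof.
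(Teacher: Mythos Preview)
Your forward direction via Lemma~\ref{lem:n-1} is fine, and the reduction to ``$r\ge 1$, $|S|\le n-3$, pick three vertices $x,y,z\notin S$ and find a good pair'' is the right opening move. The $r=1$ case is in fact immediate (and easier than you make it): if $a,b$ is the good pair with third vertex $c$, then every $v\in S$ lies outside $\{a,b,c\}$, so $v\sim a\iff v\sim b$, which forces $\dist_1(v,a)=\dist_1(v,b)$. No path-chasing is needed.

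The genuine gap is your treatment of $r\ge 2$. You aim to prove that $\dist(v,a)=\dist(v,b)$ for every $v\in S$ and then invoke truncation, but this equality of \emph{full} distances is simply false under the hypothesis. Take $G=P_4$ with vertices $1\text{--}2\text{--}3\text{--}4$ (which satisfies the triple condition) and the triple $\{1,2,3\}$: the good pair is $\{1,2\}$ with third vertex $3$, yet $\dist(4,1)=3\ne 2=\dist(4,2)$. Your penultimate-vertex argument breaks exactly here, because the shortest $4$--$1$ path is forced through $2$ and $3$, and the good-pair condition says nothing about adjacencies among $\{a,b,c\}$. No amount of bookkeeping will rescue the claim, since the claim itself is wrong.

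What the paper does instead is split cases. For $r=1$ the three-vertex argument suffices (as above). For $r\ge 2$ one has $|S|\le n-4$, so \emph{four} vertices $x,y,z,w$ lie outside $S$, and the paper shows---by a case analysis on the isomorphism type of $G[\{x,y,z,w\}]$, applying the triple hypothesis to several $3$-subsets---that two of these four have identical full-distance vectors over $V(G)\setminus\{x,y,z,w\}$. That stronger conclusion then handles every $r\ge 2$ at once. Your three-vertex approach cannot reach this conclusion; the fourth excluded vertex is essential.
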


\begin{proof}
The forward direction follows from Lemma~\ref{lem:n-1}. For the backward direction, it is clear that $G$ has no distance-$1$ resolving set of size $n-3$. For $n\ge4$ it suffices to show that among every four vertices $x,y,z,w\in V(G)$ there exist two vertices $x$ and $y$ that cannot be resolved by $V(G)-\{x,y,z,w\}$. If $x,y,z$ are pairwise adjacent or pairwise non-adjacent where $x$ and $y$ form a good pair in $x,y$, and $z$, then $x$ and $y$ cannot be resolved by $V(G)-\{x,y\}$. Suppose that the subgraph of $G$ induced by $\{x,y,z,w\}$ is isomorphic to $C_4$, which corresponds to $xyzwx$. The good pair in $x,y$, and $z$ is $x$ and $z$, which cannot be resolved by $G(V)-\{x,z\}$. Suppose that the subgraph of $G$ induced by $\{x,y,z,w\}$ is isomorphic to $2P_2$, which corresponds to $xy$ and $zw$. Then $x$ and $y$ is the good pair in $x,y$, and $z$, and $x$ and $y$ cannot be resolved by $G(V)-\{x,y\}$. The only remaining case is when the subgraph of $G$ induced by $\{x,y,z,w\}$ is isomorphic to $P_4$. Let the corresponding path be $xyzw$. Applying the necessary condition to all four subsets of $\{x,y,z,w\}$ of size three, we see that all four vertices have the same neighbors in $V(G)-\{x,y,z,w\}$. Hence, they cannot be resolved by $V(G)-\{x,y,z,w\}$.
\end{proof}

\begin{cor}\label{cor:complement}
For any graph $G$ of order $n$, $\thr(G)=n-1$ if and only if $\thr(\overline{G})=n-1$.
\end{cor}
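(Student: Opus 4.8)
The plan is to reduce the statement to the neighborhood characterization of Theorem~\ref{thm:th-n-1} and then observe that the defining condition there is unchanged when we pass to the complement graph. The cases $n \le 2$ are immediate: by Theorem~\ref{thmjoined} (and the corollary following it), $\thr$ depends only on the order of the graph when $n\le 2$, and $\overline{G}$ has the same order as $G$, so both $\thr(G)=n-1$ and $\thr(\overline{G})=n-1$ hold trivially. So I would assume $n\ge 3$.

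By Theorem~\ref{thm:th-n-1}, for $n \ge 3$ we have $\thr(G)=n-1$ if and only if every triple of distinct vertices contains a \emph{good pair}; that is, for each triple, one can label its vertices $a,b,c$ so that $N_G(a)-\{b,c\}=N_G(b)-\{a,c\}$. The same characterization applies verbatim to $\overline{G}$. Hence it suffices to prove a purely local statement: for every triple $\{a,b,c\}$ of distinct vertices, the pair $a,b$ is good with respect to $c$ in $G$ if and only if it is good with respect to $c$ in $\overline{G}$.

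The key observation is that, since $G$ is simple (so $a \notin N_G(a)$ and $b \notin N_G(b)$), both $N_G(a)-\{b,c\}$ and $N_G(b)-\{a,c\}$ are subsets of the fixed ground set $W := V(G)-\{a,b,c\}$, and a direct computation gives $N_{\overline{G}}(a)-\{b,c\} = W \setminus \bigl(N_G(a)-\{b,c\}\bigr)$, with the analogous identity for $b$. In other words, passing to $\overline{G}$ replaces the relevant neighborhood sets by their complements inside $W$. Since complementation within the fixed set $W$ is an involution, hence injective, we get $N_G(a)-\{b,c\}=N_G(b)-\{a,c\}$ if and only if $W\setminus(N_G(a)-\{b,c\})=W\setminus(N_G(b)-\{a,c\})$, which is exactly $N_{\overline{G}}(a)-\{b,c\}=N_{\overline{G}}(b)-\{a,c\}$. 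This establishes the local statement, and combining it with Theorem~\ref{thm:th-n-1} applied to both $G$ and $\overline{G}$ completes the proof.

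The only point requiring care is the bookkeeping of which vertices are deleted from which neighborhood in the definition of a good pair, and the verification of the identity $N_{\overline{G}}(a)-\{b,c\}=W\setminus(N_G(a)-\{b,c\})$ using only simplicity of $G$; beyond that, everything is formal. There is no substantive obstacle — this corollary is essentially the remark that the good-pair relation is self-complementary.
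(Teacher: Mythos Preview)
Your proof is correct and follows the same approach the paper implicitly relies on: the corollary is stated immediately after Theorem~\ref{thm:th-n-1} with no separate proof, so the intended argument is precisely that the good-pair condition is invariant under complementation. You have spelled this out carefully, including the small-order cases $n\le 2$ (which the paper leaves implicit since Theorem~\ref{thm:th-n-1} assumes $n\ge 3$) and the verification that $N_{\overline{G}}(a)-\{b,c\}$ is the complement of $N_G(a)-\{b,c\}$ inside $W=V(G)-\{a,b,c\}$.
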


Theorem~\ref{thm:th-n-1} says that a graph of order $n\ge3$ has throttling number $n-1$ if and only if there is a good pair in every three distinct vertices.
We say that vertices $x$ and $y$ form a \textit{near-identical pair} 
if $N(x)-\{y\}=N(y)-\{x\}$. Let $G_p$ denote a graph isomorphic to $K_p$ or $\overline{K_p}$.
\begin{thm}
Let $G$ be a graph of order $n$ with $n\ge3$. Then, $\thr(G)=n-1$ if and only if $G$ is one of the following, or its complement: 

\begin{itemize}
\item union of a star and any number of isolated vertices
\item $G_p+G_{n-p}$ for any integer $p\in[0,n]$
\item $P_4$
\end{itemize} 
\end{thm}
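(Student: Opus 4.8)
The plan is to use the preceding Theorem~\ref{thm:th-n-1} as the engine: a graph of order $n \ge 3$ has $\thr(G) = n-1$ if and only if among every three distinct vertices there is a good pair (two vertices with equal neighborhoods after deleting the three vertices in question). So the task reduces to a purely structural classification: determine exactly which graphs have the property that every triple of vertices contains a good pair. I would prove both directions by checking the property directly on the three listed families (and their complements), and then showing no other graph works. Since, by Corollary~\ref{cor:complement}, the property is closed under complementation, it suffices throughout to argue up to taking complements.

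For the easy direction (these graphs satisfy the condition), I would take each family in turn. For a star $K_{1,t}$ plus isolated vertices: given any three vertices, if two of them are leaves of the star or two are isolated, those two have identical neighborhoods in $G$, hence identical after deleting any third vertex, so they form a good pair; the only other configuration is \{center, leaf, isolated\}, but with $n \ge 3$ there is always a second leaf or a second isolated vertex to pair — I need to be slightly careful here and may need the hypothesis that the component structure forces enough twins; in fact for a star with at least two leaves, or one leaf plus at least one isolated vertex, any triple has two vertices among \{leaves\} $\cup$ \{isolated\} unless the triple is exactly \{center, leaf, isolated\} and there is only one leaf and one isolated vertex, i.e. $n = 3$ and $G = P_3$ — and $P_3 = K_{1,2}$, where the two leaves are a good pair. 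For $G_p + G_{n-p}$ (disjoint union or join, after complementation, of a clique/empty graph on $p$ vertices with one on $n-p$ vertices): any three vertices include two lying in the same part, and within a part all vertices are twins (true twins in $K_p$, false twins in $\overline{K_p}$), so those two form a good pair since deleting a third vertex preserves the equality of their neighborhoods. For $P_4 = w_1 w_2 w_3 w_4$: one checks the four triples by hand — e.g. in $\{w_1,w_2,w_3\}$, the good pair is $w_1, w_3$ (both have neighborhood $\{w_2\}$ after deleting $w_1, w_2, w_3$... wait, $N(w_1) = \{w_2\}$, $N(w_3) = \{w_2, w_4\}$, and removing $\{w_1,w_2,w_3\}$ leaves $N(w_1) \setminus \{w_2,w_3\} = \emptyset = N(w_3)\setminus\{w_1,w_2\}$), and similarly for the other three triples; this is a finite check.

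For the hard direction (no other graph works), suppose $G$ has order $n \ge 3$ and every triple of vertices contains a good pair. The strategy is to analyze the ``twin structure'' of $G$. Partition $V(G)$ into twin-classes, where two vertices are equivalent if $N(x) \setminus \{y\} = N(y) \setminus \{x\}$ (near-identical pairs), which — as the paper hints by defining near-identical pairs — is an equivalence relation, and each class is either a clique or an independent set. Suppose there are three or more twin-classes (or pick vertices carefully). The key claim is: if we pick three vertices from three distinct twin-classes, they can only form a good pair if the special ``$z$-exception'' in the definition of good pair ($N(x) - \{y,z\} = N(y) - \{x,z\}$, not $N(x)-\{y\} = N(y)-\{x\}$) kicks in, which forces a very rigid local structure — essentially $z$ is the unique vertex where $x$ and $y$'s neighborhoods differ. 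Pushing this through, I expect to show that either there are at most two twin-classes (leading to $G_p + G_{n-p}$ or its complement after sorting out which classes are cliques vs. independent sets and that edges between classes are ``all or nothing''), or there are exactly three twin-classes with a degenerate pattern forcing $P_4$ (the middle edge $w_2 w_3$ of $P_4$ has $w_2, w_3$ in singleton classes, $w_1$ and $w_4$ each in their own singleton class, four classes actually — so I should recount: $P_4$ has all four vertices in distinct twin-classes, so the ``few twin-classes'' dichotomy must allow this sporadic exception), or we get the star family (one large independent twin-class of leaves, one large independent class of isolated vertices, and a singleton center — after complementation this also fits a two-or-three-class pattern).

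The main obstacle is the hard direction: controlling the interaction between the twin-class structure and the $z$-exception in the good-pair definition, and in particular showing that the $z$-exception cannot be exploited ``globally'' to manufacture good pairs across many distinct twin-classes — this is what rules out all graphs other than the three sporadic/parametrized families and pins down $P_4$ as the unique irregular example. Concretely, I would argue: if $x,y$ form a good pair for the triple $\{x,y,z\}$ but are \emph{not} near-identical, then their symmetric difference of neighborhoods is exactly $\{z\}$ (or a subset of $\{z\}$ together with the trivial $\{x,y\}$ ambiguity), which is an extremely restrictive condition; applying this simultaneously across enough triples forces the vertex set to decompose as claimed. I would organize this as: (1) reduce to near-identical classes, handling the $z$-exception as a bounded perturbation; (2) show at most two ``large'' classes with complete/empty bipartite joins between them, except for small sporadic configurations on $\le 4$ vertices; (3) enumerate the sporadic configurations and check that only $P_4$ (up to complement) survives; (4) identify the resulting structures with the three listed families. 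Throughout, Corollary~\ref{cor:complement} halves the casework, and Theorem~\ref{thm:th-n-1} is what lets me forget about throttling entirely and work with the clean combinatorial good-pair condition.
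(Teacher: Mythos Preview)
Your backward direction is essentially fine (one minor slip: a star with exactly one leaf plus one isolated vertex is $K_2 + K_1$, not $P_3$, but the good-pair check still succeeds there). Your forward-direction plan, however, takes a genuinely different route from the paper. The paper organizes the hard direction around the \emph{degree set} $\D$ of $G$: it first proves $|\D| \le 3$ directly from the good-pair condition (two vertices in a good pair have degrees differing by at most one, and carefully chosen triples rule out four distinct degrees), and then does a case split on $|\D| \in \{1,2,3\}$ with subcases on whether the degrees are consecutive. In each case, degree constraints force the adjacency pattern between the degree classes and the structure is read off. Your plan instead partitions $V(G)$ into near-identical twin-classes and tries to control the ``$z$-exception'' in the good-pair definition as a bounded perturbation of the twin structure. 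Both routes are viable, but they buy different things: the paper's degree-set argument gives a clean finite case analysis with concrete numerical handles and is fully executed, whereas your twin-class approach is conceptually closer to the underlying reason the theorem holds but leaves the real work in the step you yourself flag as the main obstacle. In particular, your proposed dichotomy ``at most two large twin-classes plus sporadic configurations on $\le 4$ vertices'' is not quite right as stated, since the star-plus-isolates family has three twin-classes (center, leaves, isolated) on arbitrarily many vertices and $P_4$ has four singleton classes; you would need a sharper statement (something like ``at most two twin-classes, or two twin-classes plus a single exceptional vertex, or $n=4$'') before the enumeration goes through. The paper's degree-set approach sidesteps this by never invoking twin-classes at all.
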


\begin{proof}
We prove the forward direction, and the backward direction can be easily checked. Denote by $\D$ the degree set of $G$.

    The necessary and sufficient condition in Theorem~\ref{thm:th-n-1} implies that in every three distinct vertices $x,y$, and $z$ of $G$ the two vertices $x$ and $y$ that form a good pair have degrees that differ by at most one. Moreover, if $\deg(x)>\deg(y)$, then $xz\in E(G)$ and $yz\notin E(G)$. If $\deg(x)=\deg(y)$, then $x$ and $y$ form a near-identical pair.

    First, we show that $\D$ is at most of size $3$. Suppose that $\D$ has elements $d_1<d_2<d_3<d_4$. If $d_1+2\le d_2$, then in three vertices of degrees $d_1,d_2$, and $d_4$ there do not exist distinct vertices with degrees that differ by at most one, a contradiction. Hence, we have $d_1+1=d_2$ and $d_3+1=d_4$. By selecting three vertices of degrees $d_1,d_2$, and $d_4$, respectively, we see that every vertex of degree $d_1$ is not adjacent to any vertex of degree $d_4$. By selecting three vertices of degrees $d_1,d_3$, and $d_4$, respectively, we see that every vertex of degree $d_1$ is adjacent to every vertex of degree $d_4$. We have a contradiction, and thus $\D$ has size at most $3$.

Suppose that $\D=\{d_1,d_2,d_3\}$ where $d_1+1=d_2\le d_3-2$, then $G$ has only a vertex of degree $d_3$. 
Define $V'$ as the set of vertices of degree $d_1$ or $d_2$.
$V'$ is an independent set or a clique. 
Therefore, $G$ is the union of a star and isolated vertices or its complement. The same conclusion holds if we have $d_1+2\le d_2=d_3-1$.

Suppose that $\D=\{d-1,d,d+1\}$. Denote by $A,B$, and $C$ the sets of vertices of degree $d-1,d$, and $d+1$, respectively. By considering two vertices from $A$ and one vertex from $C$ or one vertex from $A$ and two vertices from $C$, we see that every pair of vertices in $A$ and every pair of vertices in $C$ are near-identical pairs, $x$ and $y$ are adjacent for every $x\in A,y\in C$ or $x$ and $y$ are not adjacent for every $x\in A,y\in C$, every vertex in $B$ is adjacent to all vertices in $A$ or not adjacent to any vertices in $A$,  every vertex in $B$ is adjacent to all vertices in $C$ or not adjacent to any vertices in $C$, and each of $A$ and $C$ is a clique or an independent set.
There are no vertices $x\in A,z\in B,y\in C$ such that $xz\in E(G)$ and $yz\notin E(G)$, or else there is no good pair in $x,y$, and $z$.
$B$ does not have distinct vertices $z_1$ and $z_2$ such that $z_1x,z_1y,z_2y\in E(G)$ and $z_2x\notin E(G)$. This is because we need $x$ to be adjacent to $y$ for $x,y,z_1$ to have a good pair, which is $x$ and $z_1$. So $z_1$ and $z_2$ are not adjacent, and there is no good pair among $x,z_1,z_2$, a contradiction. Three cases now remain: every vertex in $B$ is adjacent to every vertex in $A\cup C$, every vertex in $B$ is not adjacent to any vertex in $A\cup C$, and every vertex in $B$ is adjacent to every vertex in $C$ and not adjacent to any vertex in $A$. In the first case, for every $x\in A,z\in B, y\in C$ we have $xy\notin E(G)$ and $x$ and $z$ is a good pair in $x,y$, and $z$. Hence $z$ is adjacent to all other vertices in $B$ and $\deg(z)=n-1$, a contradiction. In the second case, every vertex in $B$ has zero degree, a contradiction. We further split the third case into two subcases. The first subcase is that every vertex in $A$ is not adjacent to any vertex in $C$. This implies that $|C|=1$, $A$ is an independent set, $d=1$, $B$ is an independent set, and $|B|=2$. Thus, $G$ is a union of $P_3$ and some isolated vertices. The second subcase is that every vertex in $A$ is adjacent to every vertex in $C$. This implies that $|A|=1$ and $B$ is a clique. Since every vertex in $B$ has degree one more than $x$, we have $|B|=2$ and $C$ is a clique. Therefore the complement $\overline{G}$ of $G$ is a union of a star and some isolated vertices.

   Suppose that $\D=\{d_1,d_1+1\}$. Denote by $A$ and $B$ the sets of vertices of degree $d_1$ and $d_1+1$, respectively. Suppose that $x\in B$, $y,z\in A$, $x$ is adjacent to $y$, and $x$ is not adjacent to $z$. So $x,z$ is the good pair in $x,y,z$ and $y$ is not adjacent to $z$. If $A$ has another vertex $y'$ adjacent to $x$, then by symmetry we also have $y'z\notin E(G)$ and $x$ and $z$ do not have identical neighbors in $V(G)-\{x,y,z\}$, a contradiction. Thus, $A$ does not have another vertex $y'$ adjacent to $x$. Set $A'=A-\{y\}$. We see that $A'$ is an independent set and that no vertex in $A'$ is adjacent to $y$. Given $\deg(y)<\deg(x)$, $B$ has another vertex $x'$ adjacent to $x$ that is not adjacent to $y$. For every $z'\in A'$, $x,z'$ is a good pair in $x,y,z'$. Hence $x'$ is adjacent to all vertices in $A'$. By symmetry $x'$ has at most a neighbor in $A$, so $A'=\{z\}$, $A=\{y,z\}$, and $G$ is isomorphic to $P_4$. Next, suppose that every vertex in $B$ is adjacent or not adjacent to all vertices in $A$. Moreover, there exist $x\in A$ and $y,z\in B$ such that $x$ is adjacent to $y$ and $x$ is not adjacent to $z$. Then $x,y$ is a good pair in $x,y,z$ and $yz\in E(G)$. We can similarly deduce that $B$ does not have another vertex that is not adjacent to $x$. Set $B'=B-\{z\}$. We see that every vertex of $B'$ is adjacent to $z$. Given $\deg(x)<\deg(z)$, $A$ has another vertex $x'$ that is adjacent to $z$ and not adjacent to $x$. So $z$ is adjacent to $x\in A$ and not adjacent to $x'\in A$, a contradiction. The remaining cases are $xy\in E(G)$ for every $x\in A,y\in B$ and $xy\notin E(G)$ for every $x\in A,y\in B$. By Corollary~\ref{cor:complement} it suffices to discuss only the second case. It could be seen that each of $A$ and $B$ is a clique or an independent set, and thus the statement follows.

   Suppose that $\D=\{d_1,d_2\}$ where $d_1\le d_2-2$. Denote by $A$ and $B$ the sets of vertices of degree $d_1$ and $d_2$, respectively. We see that every pair of vertices in $A$ and every pair of vertices in $B$ is a near-identical pair, and $G$ is $G_p+G_{n-p}$ for some integer $p\in[0,n]$ or its complement.

   Suppose that $\D=\{d\}$. If $G$ has three distinct vertices $x,y$, and $z$ where $zx,zy\in E(G)$ and $xy\notin E(G)$, then $N(x)=N(y)$. We claim that every vertex $z'\in N(y)$ is adjacent to every vertex $w\in V(G)-N(y)\cup\{x,y\}$. If $z'$ is not adjacent to $w$, then among $z',y$, and $w$ it is $z'$ and $y$ that have identical neighbors in the remaining $n-3$ vertices. This contradicts $x$ being adjacent to $z'$ but not $y$. This implies that $N(w)=N(y)$, and $\overline{G}=K_{p}+G_{n-p}$  where $n-p=|N(y)|$.

   If $\D=\{d\}$ and $G$ does not have three distinct vertices $x,y$, and $z$ where $zx,zy\in E(G)$ and $xy\notin E(G)$, then in $G$ vertex adjacency is an equivalence relation and the complement $\overline{G}$ of $G$ is complete $r$-partite for some $r\in\mathbb{N}$. By Corollary~\ref{cor:complement}, $\thr(G)=n-1$. Each partite of $\overline{G}$ has $1$ or $n/2$ vertices and $r$ is $n$ or $2$. Therefore, $\overline{G}$ is $K_n$ or $K_{n/2,n/2}$.
   \end{proof}

\subsection{Special families of graphs}\label{cyc_path}

In this section we investigate standard metric dimension throttling for special families of graphs. We begin with the minimum possible throttling number of trees.
\begin{thm}\label{thm:thr-tree}
The minimum possible metric dimension throttling number of a tree of order $n$ is $\Theta(n^{1/3})$.
\end{thm}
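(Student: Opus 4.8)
The plan is to prove the two halves of the statement separately: exhibit a tree of order $n$ with throttling number $O(n^{1/3})$, and then show $\thr(T)=\Omega(n^{1/3})$ for \emph{every} tree $T$ of order $n$.

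For the upper bound I would use a ``bottle-brush'' tree. Fix $q=\lceil n^{1/3}\rceil$, take a spine path $s_1 s_2\cdots s_m$ with $m=\Theta(n/q)=\Theta(n^{2/3})$, attach to each $s_i$ a pendant path on $q$ new vertices $p_{i,1},\dots,p_{i,q}$ (writing $p_{i,0}:=s_i$), and pad with a path of fewer than $q$ extra vertices so the order is exactly $n$. Place landmarks at $s_1$, at $s_m$, and at every $q$-th spine vertex; this is $O(m/q)=O(n^{1/3})$ landmarks, and with radius $r=3q=O(n^{1/3})$ this yields $\thr\le O(n^{1/3})$ once we check it is a distance-$r$ resolving set. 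Here a landmark at $s_j$ assigns to $p_{i,\delta}$ the distance $|i-j|+\delta$, and every vertex lies within $\tfrac32 q<r$ of its nearest landmark, so all the distances used below are untruncated. Two vertices on a common pendant are separated by the nearest spine landmark, since $|i-j|+\delta$ is strictly monotone in $\delta$. For $p_{i,\delta}$ and $p_{i',\delta'}$ with $i<i'$, a landmark $s_j$ with $j\le i$ fails to separate them only if $\delta-\delta'=i'-i$, while a landmark $s_{j'}$ with $j'\ge i'$ fails only if $\delta'-\delta=i'-i$; these cannot both hold, and landmarks of both kinds lie within distance $2q\le r$ of the relevant vertex, so one of them works. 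Taking $\delta'=0$ covers pendant-versus-spine pairs, and $i=i'$ is the common-pendant case.

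For the lower bound, fix $r$ realizing $\thr(T)=r+\dim_r(T)$, let $S$ be a minimum distance-$r$ resolving set, $k=|S|$, and $t=k+r=\thr(T)$; the goal is $n=O(t^3)$. Let $H$ be the minimal subtree of $T$ containing $S$; its leaves lie in $S$, so $H$ has at most $k$ leaves and at most $k-2$ vertices of degree $\ge 3$. Each component $C$ of $T-V(H)$ hangs off a single vertex $h$ of $H$ and contains no landmark, so $d(u,v)=d(u,h)+d(h,v)$ for all $u\in C$, $v\in S$; hence two vertices of $C$ equidistant from $h$ are unresolved, forcing $C$ to be a path, and a second pendant path at $h$ would again create an unresolved pair, so each $h$ carries at most one pendant path. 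Since a vertex of $C$ at distance $\delta$ from $h$ has distance $\ge\delta$ to every landmark, and at most one vertex of $T$ can be farther than $r$ from all of $S$, each pendant path has at most $r+1$ vertices. Finally, decompose $H$ at $S':=S\cup\{\text{branch vertices of }H\}$, a set of size $O(k)$ that splits $H$ into $O(k)$ paths with no internal vertex in $S'$; a vertex at distances $j$ and $\ell-j$ from the two ends of such a path has distance $\ge\min(j,\ell-j)$ from $S$, so the midpoint argument forces each such path to have length $O(r)$, whence $|V(H)|=O(kr)$. Combining, $n=|V(H)|+(\text{pendant vertices})\le |V(H)|(r+2)=O(kr^2)\le O(t^3)$, so $\thr(T)=\Omega(n^{1/3})$.

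I expect the main obstacles to be the two verification steps. In the upper bound, the case analysis that the chosen spine landmarks resolve the bottle-brush at the small radius $r=\Theta(q)$ needs care, especially in confirming that whenever a ``left'' or ``right'' landmark fails to separate a pair, a landmark on the opposite side both separates it and is close enough to keep its distances untruncated; this is precisely why the pendant lengths and the landmark spacing are both $\Theta(q)$ while $r$ is a slightly larger multiple of $q$. In the lower bound, the one genuinely structural step is bounding the lengths of the landmark-free pieces of the Steiner tree $H$ by $O(r)$ (a long such piece would have a midpoint more than $r$ from $S$), and then assembling the pieces with the pendant-path bound to conclude $|V(H)|=O(kr)$ and $n=O(kr^2)$.
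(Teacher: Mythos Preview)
Your proof is correct and follows essentially the same approach as the paper: the same bottle-brush tree for the upper bound, and the same Steiner-tree decomposition for the lower bound (the paper's $G'$ is your $H$), bounding $|V(H)|=O(kr)$ from $O(k)$ landmark/branch segments of length $O(r)$ and then $n\le |V(H)|(r+2)=O(kr^2)\le O(t^3)$. Your write-up in fact supplies more verification detail than the paper, which simply asserts that the upper-bound configuration works; the one slightly loose phrase is ``all the distances used below are untruncated'' (the cross-distance from $p_{i',\delta'}$ to a far-away left landmark $s_j$ may well be truncated), but your ``fails only if'' implications remain valid since truncation of that distance can only help separate the pair.
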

\begin{proof}
For the upper bound, construct a tree $G$ as follows. Start with path $P$ of length $\Theta(n^{2/3})$, and attach legs of length $\Theta(n^{1/3})$ to every vertex of $P$. Place $\Theta(n^{1/3})$ landmarks along $P$ with distance $\Theta(n^{1/3})$ between consecutive landmarks, and it is easy to see that $\thr(G)=O(n^{1/3})$. See Figure \ref{fig:411} for a visualization of this construction.

For the lower bound, suppose that $G$ is a tree of order $n$ and $k=r+\dim_r(G)$ for some integer $r\in[0,k]$. Let $S$ be a distance-$r$ resolving set for $G$ of size $\dim_r(G)$. Consider the subgraph $G'$ of $G$ formed by all the paths between the vertices in $S$. Let the vertex set $T\subseteq V(G')$ consist of those with degree at least $3$ in $G'$. We claim that $|T|\le|S|$. Starting with an empty vertex set, we construct $G'$ by adding vertices in $S$ one by one to $G'$. Each time a new vertex from $S$ is attached to $G'$ via a path, so $|T|$ increases by at most one and hence $|T|\le |S|$. In $G'$ there are $|S\cup T|-1=\Theta\left(\dim_r(G)\right)=O(k)$ maximal paths consisting of vertices of degree at most two without any vertex in $S$ as internal vertex, and each of them has length $O(r)=O(k)$ for its middle vertices to be resolved. This implies that $|V(G')|\le|S\cup T|O(k)=O(k^2)$. Every vertex of $G$ that is not in $G'$ is attached to $G'$ via a path, and every vertex $u$ of $G'$ has at most one such path attached to it or else some vertices from the multiple paths attached to $u$ cannot be resolved. Moreover, every such path has length $O(r)=O(k)$ or else its vertices farthest from $G'$ cannot be resolved. Hence $n=O\left(|V(G')|k\right)=O(k^3)$ and $k=\Omega(n^{1/3})$.
\end{proof}

\begin{figure}[h]
    \centering
    \includegraphics[width=0.35\textwidth]{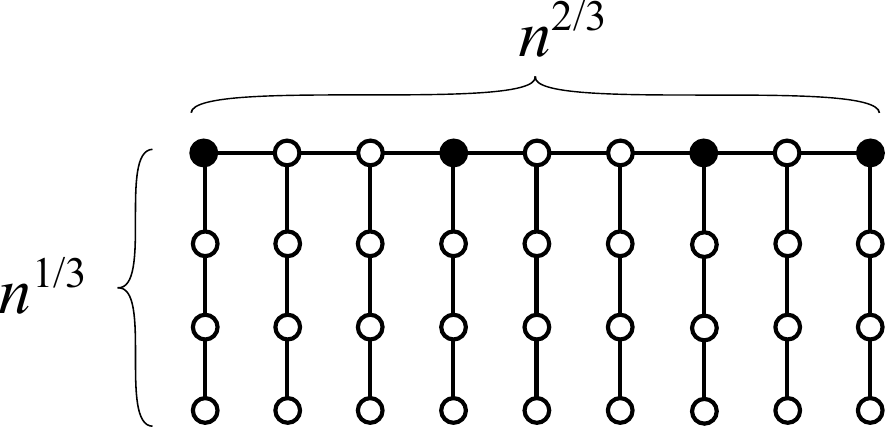}
    \caption{A $O(n^{1/3})$ metric dimension throttling configuration for a tree of order $n$.}
    \label{fig:411}
\end{figure}



Theorem~\ref{thm:min_max} implies that $\thr(K_n) = n-1$. We continue by deriving the metric dimension throttling numbers of complete bipartite graphs.

\begin{prop}
For all positive integers $s, t$, $\thr(K_{s, t}) = s+t-1$.
\end{prop}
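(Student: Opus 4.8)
The plan is to compute $\thr(K_{s,t})$ by first pinning down the truncated metric dimension $\dim_r(K_{s,t})$ for each relevant value of $r$, and then minimizing $r + \dim_r(K_{s,t})$. The key structural observation is that $K_{s,t}$ has diameter $2$ (assuming $s,t \ge 1$ and $\max(s,t) \ge 2$; the degenerate case $K_{1,1}=K_2$ is handled separately by Theorem~\ref{thmjoined}), so for every $r \ge 1$ we have $\dist_r = \dist$, and hence $\dim_r(K_{s,t}) = \dim(K_{s,t})$ for all $r \ge 1$; only $r = 0$ (adjacency dimension) can differ. Thus $\thr(K_{s,t}) = \min\bigl(\dim_0(K_{s,t}),\ 1 + \dim(K_{s,t})\bigr)$, and the problem reduces to two finite computations.

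First I would handle $r = 0$, i.e. the adjacency dimension. Label the two sides $A$ (size $s$) and $B$ (size $t$). For a distance-$0$ resolving set $S$, a vertex $v \in S \cap A$ has truncated distance vector entry $0$ to itself, $1$ to everything in $B$, and $2$ (capped) to everything in $A \setminus \{v\}$; symmetrically for $v \in S \cap B$. So from the point of view of the landmarks in $A$, all vertices of $A$ outside $S$ look identical to each other (all at distance $2$), and all vertices of $B$ look identical to each other (all at distance $1$); similarly for landmarks in $B$. Consequently, to distinguish two non-landmark vertices of $A$ we need a landmark in $A$ adjacent to... — but no landmark is adjacent to another vertex of $A$, so in fact two distinct vertices of $A\setminus S$ can never be distinguished at $r=0$. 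Hence $S$ must contain at least $s-1$ vertices of $A$ and at least $t-1$ vertices of $B$, giving $\dim_0(K_{s,t}) \ge s+t-2$; and $s+t-2$ landmarks (all but one vertex on each side) clearly suffice, so $\dim_0(K_{s,t}) = s+t-2$, and $r=0$ gives throttling value $s+t-2$. Wait — that already beats $s+t-1$, so I need to recheck: the issue is whether taking $s-1$ on one side and $t-1$ on the other actually resolves the two omitted vertices from each other. The omitted vertex $a^* \in A$ has truncated vector (w.r.t. landmarks in $A$) all $2$'s and (w.r.t. landmarks in $B$) all $1$'s; the omitted vertex $b^* \in B$ has all $1$'s to $A$-landmarks and all $2$'s to $B$-landmarks. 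If both $A$ and $B$ have at least one landmark, these differ, so $a^*$ and $b^*$ are resolved. So indeed $\dim_0(K_{s,t}) = s + t - 2$ provided $s,t \ge 1$ and we can afford one landmark on each side, i.e. $s \ge 1$ and $t \ge 1$ with the set sizes $s-1 \ge ?$ — if $s = 1$ then $A = \{a^*\}$ has no landmark, and then $a^*$ must be distinguished from the $b^*$'s using only $B$-landmarks: $a^*$ is at distance $1$ from every $B$-landmark, and so is... nothing in $B$, good, but $b^* \in B\setminus S$ is at distance $2$ from $B$-landmarks, so they differ. So even $s=1$ works. I should double-check the claimed answer: perhaps the intended statement already accounts for this and the paper's $s+t-1$ is simply the value of $\dim_0$ when one insists... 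Actually, re-examining, I suspect $\dim_0(K_{s,t}) = s+t-2$ is correct and $\thr(K_{s,t}) = s+t-2$, which would contradict the proposition. The resolution must be that I am mis-capping distances: $\dist_0(u,v) = \min(\dist(u,v), 1)$, so the cap is $r+1 = 1$, meaning every non-self distance collapses to $1$! That changes everything: at $r=0$, a landmark $v$ only tells you whether you are $v$, adjacent to $v$, or "far" — but "adjacent" and "far" are both recorded as $1$ vs $\ge 1$... no: $\dist_0(v, x) \in \{0, 1\}$ where it is $0$ iff $x = v$ and $1$ otherwise (since $r+1 = 1$ caps everything $\ge 1$ down to $1$). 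So a distance-$0$ resolving set must literally contain all but one vertex: $\dim_0(K_{s,t}) = n - 1 = s+t-1$.

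With that correction the computation is clean. For $r \ge 1$: since $\mathrm{diam}(K_{s,t}) \le 2 \le r+1$, truncation does nothing, so $\dim_r(K_{s,t}) = \dim(K_{s,t}) = s + t - 2$ (the standard fact that the metric dimension of $K_{s,t}$ with $s,t\ge 2$ is $s+t-2$; for $\min(s,t)=1$, $K_{1,t}$ is a star with $\dim = t-1 = s+t-2$). Hence for every $r \ge 1$, $r + \dim_r(K_{s,t}) \ge 1 + (s+t-2) = s+t-1$, with equality at $r=1$; and $r = 0$ gives $0 + (s+t-1) = s+t-1$. Therefore $\thr(K_{s,t}) = s+t-1$. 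The main thing to get right — and the only real subtlety — is the capping convention ($\dist_r = \min(\dist, r+1)$, so $r=0$ forces the trivial configuration), together with correctly quoting $\dim(K_{s,t}) = s+t-2$; once those are in hand the two-case minimization is immediate and there is no serious obstacle.
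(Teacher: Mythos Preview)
Your proposal is correct and follows essentially the same approach as the paper: both arguments rest on the fact that $\dim(K_{s,t})=s+t-2$ and that with only $s+t-2$ landmarks one needs $r\ge 1$, while $r=0$ forces $s+t-1$ landmarks. Your framing via the diameter-$2$ observation (so $\dim_r=\dim$ for all $r\ge 1$) is a slightly cleaner way to organize the case split on $r$, but the content is the same as the paper's proof.
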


\begin{proof}
The upper bound is achieved by placing landmarks on all but one vertex, so it suffices to prove the lower bound. From \cite{kpartite}, it is known that $\dim(K_{s, t}) \geq s+t-2$, and any resolving set of $K_{s, t}$ must contain at least every vertex but one from each part of $K_{s, t}$.

Suppose that $s+t-2$ landmarks are placed on $K_{s,t}$, since the proof would be complete if more were used. If the robot is dropped at either of the vertices with no landmarks, then it must use an $r$-sensor with $r \geq 1$ to determine its initial location. This gives a lower bound of $s+t-2+1 = s+t-1$ on $\thr(K_{s,t})$.
\end{proof}

It has been shown that the throttling numbers of paths and cycles for zero forcing, positive semidefinite zero forcing, skew zero forcing, and cop versus robber are $\Theta(\sqrt{n})$. Below we show that the throttling numbers of paths and cycles for standard metric dimension are also $\Theta(\sqrt{n})$. To this end, we employ two results on truncated metric dimension from \cite{frongillo}, stated below.

\begin{thm}\label{kdim_cycle}\cite{frongillo}
Let $n\ge 3$ and let $k$ be any positive integer. 
\begin{itemize}
\item[(a)] If $n \le 3k+3$, then $\dim_k(C_n)=2$.
\item[(b)] If $n \ge 3k+4$, then
\begin{equation*}
\dim_{k}(C_n)=\left\{
\begin{array}{ll}
\lfloor\frac{2n+3k-1}{3k+2}\rfloor & \mbox{ if } n \equiv 0,1,\ldots, k+2 \pmod{(3k+2)},\\ 
\lfloor\frac{2n+4k-1}{3k+2}\rfloor & \mbox{ if } n \equiv k+3,\ldots,  \lceil \frac{3k+5}{2}\rceil-1 \pmod{(3k+2)},\\ 
\lfloor\frac{2n+3k-1}{3k+2}\rfloor & \mbox{ if } n \equiv \lceil \frac{3k+5}{2}\rceil,\ldots, 3k+1 \pmod{(3k+2)}.
\end{array}\right.
\end{equation*}
\end{itemize}
\end{thm}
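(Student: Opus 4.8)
The plan is to prove parts (a) and (b) separately, in each case pairing an explicit construction (upper bound) with a counting argument over the \emph{gap structure} of a distance-$k$ resolving set (lower bound). Throughout, fix $k$ and represent a candidate distance-$k$ resolving set $S=\{\ell_1,\dots,\ell_m\}$ as $m$ points on the cycle, inducing $m$ \emph{gaps} $g_1,\dots,g_m$ (the numbers of vertices strictly between consecutive landmarks), so that $\sum_i g_i + m = n$. The one local computation everything rests on: a vertex $v$ internal to gap $i$, at distance $j$ from the left endpoint and $g_i+1-j$ from the right endpoint, has truncated distances $\min(j,k+1)$ and $\min(g_i+1-j,k+1)$ to those two landmarks, and — once $n\ge 3k+4$ is used to push the remaining landmarks out of range — truncated distance $k+1$ to every other landmark. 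From this one reads off exactly which pairs of vertices collide.

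For the lower bound in (b) I would extract two necessary conditions on the gaps of any distance-$k$ resolving set. (i) \emph{At most one gap has more than $2k$ internal vertices, and that gap has exactly $2k+1$}: a gap of size $g\ge 2k+1$ contains $g-2k$ internal vertices whose entire truncated distance vector is $(k+1,\dots,k+1)$, and a resolving set admits at most one such vertex, so $\sum_i\max(0,g_i-2k)\le 1$. (ii) \emph{No two consecutive gaps both have at least $k+1$ internal vertices}: if a landmark $\ell$ has adjacent gaps of sizes $g_L,g_R\ge k+1$, then the two neighbors of $\ell$ (one in each gap) both have truncated distance $1$ to $\ell$ and $k+1$ to every other landmark, hence collide; more generally the vertices at distance $j$ from $\ell$ on either side collide for $1\le j\le\min(g_L,g_R)-k$. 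Given (i) and (ii), the gaps exceeding $k$ form an independent set in the cyclic sequence of gaps, so at most $\lfloor m/2\rfloor$ gaps are ``big''; bounding each big gap by $2k$ (plus the single $+1$ bonus from (i)) and each small gap by $k$, and summing $g_i+1$, yields an explicit upper bound on $n$ in terms of $m$ whose inversion gives exactly $\dim_k(C_n)\ge\lfloor\frac{2n+3k-1}{3k+2}\rfloor$ or $\lfloor\frac{2n+4k-1}{3k+2}\rfloor$ according to $n\bmod(3k+2)$ — the two numerators reflecting whether the extremal $m$ is forced to be even or odd.

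For the upper bound in (b), for each residue class of $n$ modulo $3k+2$ I would write down an explicit placement realizing the claimed value: alternate ``big'' gaps of size $2k$ with ``small'' gaps of size $k$ around the cycle, inflating one big gap to $2k+1$ and/or leaving one irregular gap to absorb the residue, then verify resolvability by checking the three collision types flagged above (two internal vertices of one gap; mirror vertices across one landmark; vertices of distinct gaps). Part (a), $n\le 3k+3$, is the degenerate case: the lower bound $\dim_k(C_n)\ge 2$ holds because one landmark cannot separate its two neighbors, and for the upper bound one checks that two landmarks with gaps $(\lceil(n-2)/2\rceil,\lfloor(n-2)/2\rfloor)$ — or, when $n-2\le 2k+1$, with gaps $(n-2,0)$ — satisfy conditions (i) and (ii) and hence, by the same collision analysis now with only two gaps, resolve $C_n$; this works precisely up to $n=3k+3$ and breaks at $n=3k+4$, which is why the threshold sits there.

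The main obstacle will be making the two directions meet exactly: squeezing the precise optimal integer $m$ out of conditions (i)--(ii) for every residue of $n$ (rather than just its $\Theta(n/k)$ order), and symmetrically engineering the constructions so that no unlucky residue costs an extra landmark — this is where the case split into three congruence ranges with two different numerators comes from, and it is pure bookkeeping that must be gotten right. A secondary technical point is justifying the ``every other landmark is out of range'' simplification uniformly for $n\ge 3k+4$, including wraparound distances when one gap is very large; this should be handled either by invoking conditions (i)--(ii) themselves (a mild bootstrap) or by a direct diameter estimate.
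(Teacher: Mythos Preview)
This theorem is not proved in the paper: it is quoted verbatim from \cite{frongillo} (Frongillo, Geneson, Lladser, Tillquist, Yi) and used as a black box to derive the asymptotics of $\thr(C_n)$. There is therefore no proof in the present paper for your proposal to be compared against. Your outline --- analyzing the gap sequence of a distance-$k$ resolving set, extracting the two structural constraints (at most one ``overflow'' gap of size $2k+1$, and no two consecutive gaps of size $\ge k+1$), and matching this with an alternating $2k/k$ construction --- is the natural approach and is essentially how the cited source proceeds, so the plan is sound; but if your goal was to reproduce what \emph{this} paper does, the answer is simply that it cites the result without proof.
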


\begin{thm}\label{kdim_path}\cite{frongillo}
Let $n \ge 2$ and let $k$ be any positive integer.
\begin{itemize}
\item[(a)] If $n \le k+2$, then $\dim_k(P_n)=1$.
\item[(b)] If $k+3 \le n \le 3k+3$, then $\dim_k(P_n)=2$.
\item[(c)] If $n \ge 3k+4$, then
\begin{equation*}
\dim_k(P_n)=\left\{
\begin{array}{ll}
\lfloor\frac{2n+3k-1}{3k+2}\rfloor & \mbox{ if } n \equiv 0,1,\ldots, k+2 \pmod{(3k+2)},\\ 
\lfloor\frac{2n+4k-1}{3k+2}\rfloor & \mbox{ if } n \equiv k+3,\ldots,  \lceil \frac{3k+5}{2}\rceil-1 \pmod{(3k+2)},\\ 
\lfloor\frac{2n+3k-1}{3k+2}\rfloor & \mbox{ if }  n \equiv \lceil \frac{3k+5}{2}\rceil,\ldots, 3k+1 \pmod{(3k+2)}.
\end{array}\right.
\end{equation*}
\end{itemize}
\end{thm}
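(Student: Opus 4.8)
The plan is to reduce everything to a purely combinatorial description of the distance-$k$ resolving sets of $P_n$, and then run a counting argument for the lower bound against an explicit construction for the upper bound. Label the vertices $1,\dots,n$; a landmark at position $p$ assigns vertex $x$ the value $\min(|x-p|,k+1)$, so it fails to separate $x<y$ exactly when $y=2p-x$ (a mirror pair about $p$) or both $x,y$ lie at distance $\ge k+1$ from $p$. From this I would first show that $S=\{p_1<\dots<p_m\}$ is distance-$k$ resolving for $P_n$ if and only if (i) at most one vertex of $P_n$ is at distance $>k$ from every landmark, and (ii) for every $p_i$ and every $j\ge 1$ with $p_i-j,\,p_i+j\in\{1,\dots,n\}$, at least one of $p_i-j,\,p_i+j$ is within distance $k$ of some landmark other than $p_i$. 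Monotonicity in $j$ collapses (ii) to its $j=1$ instance (and makes the $j>k$ instances automatic given (i)); the $j=1$ instance for $p_i$ just says $p_i-p_{i-1}\le k+1$ or $p_{i+1}-p_i\le k+1$, using whichever neighbours exist. Setting $a_0=p_1-1$, $a_i=p_{i+1}-p_i-1$ for $1\le i\le m-1$, and $a_m=n-p_m$, so that $n=m+\sum_{i=0}^m a_i$, these conditions become: (C1) no two consecutive members of $a_1,\dots,a_{m-1}$ are both $\ge k+1$; (C2) $a_1\le k$ whenever $a_0\ge 1$, and $a_{m-1}\le k$ whenever $a_m\ge 1$; (C3) $\max(0,a_0-k)+\max(0,a_m-k)+\sum_{i=1}^{m-1}\max(0,a_i-2k)\le 1$.

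Parts (a) and (b) fall out immediately: with a single landmark, (ii) is unsatisfiable unless $p_1\in\{1,n\}$, and then (C3) gives $n\le k+2$; with two landmarks, (C1)--(C3) force $n\le 3k+3$, while $n\ge k+3$ rules out one landmark. For the lower bound in (c), I would maximize $\sum_{i=0}^m a_i$ subject to (C1)--(C3). Constraint (C1) means that over any two consecutive interior gaps $a_i,a_{i+1}$ (with $1\le i\le m-2$) the total is at most $2k+k=3k$: at most one is "big" (at most $2k$) and the other is "small" (at most $k$); (C3) permits exactly one gap to be one larger; (C2) bounds the two end gaps. Summing the $\lfloor(m-1)/2\rfloor$ blocks and adding the ends gives $\sum_i a_i\le \frac{3k}{2}(m-1)+O(k)$, hence $n\le m+\frac{3k}{2}(m-1)+O(k)$ and $m\ge \frac{2n}{3k+2}-O(1)$; sharpening this to $\lfloor\frac{2n+3k-1}{3k+2}\rfloor$ or $\lfloor\frac{2n+4k-1}{3k+2}\rfloor$ requires tracking the parity of $m$, where the single unit of slack from (C3) is spent, and the coupling (C2) between $a_0$ and $a_1$ (and between $a_m$ and $a_{m-1}$), which is precisely what makes the answer depend on $n$ modulo $3k+2$.

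For the matching upper bound I would place landmarks so that the gap variables alternate $k,2k,k,2k,\dots$ (equivalently, "small" gaps of length $k+1$ alternating with "big" gaps of length $2k+1$), inflate one big gap to length $2k+2$ and put the single permitted far vertex inside it, and choose $a_0,a_m\in\{0,1,\dots,k+1\}$ to absorb the residue of $n$ modulo $3k+2$; verifying (C1)--(C3) for this configuration is routine, and the three residue ranges in the statement correspond to the three natural ways of distributing the leftover length among the left end, the right end, and the one oversized gap.

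The main obstacle is not the structural characterization in the first step, nor the crude $\Theta(n/k)$ estimates, but the exact bookkeeping that produces the floor formula with its three modular cases: pinning down, for each residue of $n$ modulo $3k+2$, the optimal behaviour near the two endpoints (a landmark at the endpoint versus the endpoint serving as the lone far vertex) and where to spend the unit of slack in (C3), and then certifying that this choice is genuinely optimal. This case analysis is presumably what makes the proof in \cite{frongillo} lengthy.
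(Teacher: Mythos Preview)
This theorem is not proved in the present paper at all: it is quoted verbatim from \cite{frongillo} and used only as a black box to obtain the asymptotic $\thr(P_n)=2\sqrt{2n/3}\,(1\pm o(1))$. There is therefore no ``paper's own proof'' against which to compare your proposal.

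That said, your structural characterization is sound. The reduction of the resolving condition to the gap constraints (C1)--(C3) is correct: condition (ii) does collapse to its $j=1$ instance because once $p_i-p_{i-1}\le k+1$, the vertex $p_i-j$ lies within distance $k$ of $p_{i-1}$ for every $j\in[1,k]$ (whether $p_i-j$ sits above or below $p_{i-1}$), and the $j>k$ instances are subsumed by (C3). Parts (a) and (b) then follow exactly as you say; for (b) the extremal two-landmark configuration is $a_0=k+1$, $a_1=k$, $a_2=k$, giving $n=3k+3$. Your block-counting lower bound and alternating $k,2k$ gap construction for part (c) are the natural approach, and you are right that the genuine work lies in the endpoint and parity bookkeeping that produces the three modular cases --- something you have outlined but not carried out. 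If you want to complete this, the source \cite{frongillo} contains the full case analysis.
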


\begin{prop}\label{path} For $n \geq 3$,  $\thr(C_n) = 2 \sqrt{\frac{2}{3} n}\left(1\pm o(1)\right)$.
\end{prop}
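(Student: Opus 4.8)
The plan is to use the exact formula for $\dim_k(C_n)$ from Theorem~\ref{kdim_cycle} and then optimize $\dim_r(C_n) + r$ over all positive integers $r$. The key observation is that in the regime $n \ge 3k+4$, Theorem~\ref{kdim_cycle}(b) gives $\dim_k(C_n)$ equal to one of $\lfloor \frac{2n+3k-1}{3k+2}\rfloor$ or $\lfloor \frac{2n+4k-1}{3k+2}\rfloor$, both of which are $\frac{2n}{3k}(1+o(1))$ as $k \to \infty$ (the additive $O(1)$ terms in numerator and the $+2$ in the denominator are lower-order when $k$ is large, which it will be at the optimum since the optimal $r$ will turn out to be $\Theta(\sqrt n)$). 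So up to $(1+o(1))$ factors, minimizing $\dim_r(C_n)+r$ is the same as minimizing $f(r) = \frac{2n}{3r} + r$ over positive reals.

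First I would do the continuous optimization: $f(r) = \frac{2n}{3r}+r$ is minimized at $r = \sqrt{2n/3}$, where $f(r) = 2\sqrt{2n/3}$. This is the candidate value $2\sqrt{\frac{2}{3}n}$. For the upper bound on $\thr(C_n)$, I would pick $r = \lceil \sqrt{2n/3}\rceil$ (or a nearby integer of the right residue class mod $3k+2$ if I want to land on the cleaner branch), check that $n \ge 3r+4$ so that the formula of Theorem~\ref{kdim_cycle}(b) applies, and plug in to get $\dim_r(C_n) + r \le \frac{2n}{3r}(1+o(1)) + r = 2\sqrt{\frac23 n}(1+o(1))$. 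For small $n$ (below the threshold where $r = \sqrt{2n/3}$ forces case (a)), the bound is trivially satisfied since $\thr(C_n) \le \dim(C_n) + 0 \le 2$ — but actually one should be careful to state the claim only asymptotically, which the $(1\pm o(1))$ in the statement permits.

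For the lower bound, let $r$ be the value achieving $\thr(C_n)$, so $\thr(C_n) = \dim_r(C_n) + r$. If $n \le 3r+3$, then $r \ge (n-3)/3 = \Omega(n) \gg \sqrt{n}$, so certainly $\thr(C_n) \ge r \ge 2\sqrt{\frac23 n}$ for large $n$, done. Otherwise $n \ge 3r+4$ and Theorem~\ref{kdim_cycle}(b) applies, giving $\dim_r(C_n) \ge \lfloor \frac{2n+3r-1}{3r+2}\rfloor \ge \frac{2n}{3r+2} - 1 \ge \frac{2n}{3r}(1-o(1))$ using $r = \Omega(\sqrt n)$ here too (if $r$ were $o(\sqrt n)$ then $\dim_r(C_n) \ge \frac{2n}{3r+2}-1 \to \infty$ faster than $\sqrt n$, contradicting optimality against the upper bound — so at the optimum $r = \Theta(\sqrt n)$, which I should establish first). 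Then $\thr(C_n) = \dim_r(C_n) + r \ge \frac{2n}{3r}(1-o(1)) + r \ge 2\sqrt{\frac{2}{3}n}(1-o(1))$ by AM–GM.

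The main obstacle — really just bookkeeping rather than a genuine difficulty — is handling the three branches of Theorem~\ref{kdim_cycle}(b) and the floor functions uniformly, and confirming that the optimal $r$ is indeed $\Theta(\sqrt n)$ so that all the "$+O(1)$ is negligible" and "$3r+2 \sim 3r$" approximations are legitimate; this pins down the $o(1)$ error terms. I would isolate the claim "$r^* = \sqrt{2n/3}(1+o(1))$ at the optimum" as the first step: the upper bound construction shows $\thr(C_n) = O(\sqrt n)$, so $r^* = O(\sqrt n)$; and if $r^* = o(\sqrt n)$ then $\dim_{r^*}(C_n) \ge \frac{2n}{3r^*+2}-1 = \omega(\sqrt n)$, contradiction; so $r^* = \Theta(\sqrt n)$, and then the clean optimization of $\frac{2n}{3r}+r$ governs the leading constant.
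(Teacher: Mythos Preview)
Your proposal is correct and follows essentially the same approach as the paper: extract from Theorem~\ref{kdim_cycle} that $\dim_k(C_n)$ is $\frac{2n}{3k+2}$ up to a bounded error, then optimize $k+\dim_k(C_n)$ via AM--GM. The paper streamlines your case analysis by noting that the inequality $\bigl|\dim_k(C_n)-\frac{2n+3k-1}{3k+2}\bigr|\le 1$ holds uniformly for all $n\ge 3$ and $k\ge 1$ (case~(a) included), rewriting $k+\dim_k(C_n)=\frac{3k+2}{3}+\frac{2n}{3k+2}+O(1)$, and applying AM--GM directly to the two displayed terms; this avoids having to first pin down $r^\ast=\Theta(\sqrt n)$ and handle the $n\le 3r+3$ regime separately.
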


\begin{proof}
By Theorem~\ref{kdim_cycle}, we have \[\frac{2n+3k-1}{3k+2}-1 \le \dim_k(C_n) \le \frac{2n+3k-1}{3k+2}+1\] for all $n \ge 3$ and $k \ge 1$. Therefore, \[k+\dim_k(C_n) = \frac{3k+2}{3}+\frac{2n}{3k+2} + O(1),\] so $\thr(C_n) = 2 \sqrt{\frac{2}{3} n}\left(1\pm o(1)\right)$ by the arithmetic mean - geometric mean inequality.
\end{proof}

\begin{prop} For $n > 0$, $\thr(P_n) = 2 \sqrt{\frac{2}{3} n}(1\pm o(1))$.
\end{prop}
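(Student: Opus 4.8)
The plan is to mirror the proof of Proposition~\ref{path} almost verbatim, replacing Theorem~\ref{kdim_cycle} by Theorem~\ref{kdim_path}. The crucial observation is that part~(c) of Theorem~\ref{kdim_path} gives exactly the same three‑case closed form for $\dim_k(P_n)$ as in the cycle case, so whenever $n \ge 3k+4$ we obtain the two‑sided estimate
\[
\frac{2n+3k-1}{3k+2} - 1 \;\le\; \dim_k(P_n) \;\le\; \frac{2n+3k-1}{3k+2} + 1 .
\]
Consequently $k + \dim_k(P_n) = \frac{3k+2}{3} + \frac{2n}{3k+2} + O(1)$, and by the arithmetic mean--geometric mean inequality the right‑hand side is minimized, up to lower‑order terms, when $3k+2 \approx \sqrt{6n}$, i.e.\ $k = \Theta(\sqrt{n})$, giving the value $2\sqrt{\tfrac{2}{3} n}\,(1 \pm o(1))$.

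For the upper bound I would take $k = \lceil \sqrt{2n/3}\,\rceil$ (or any integer within $O(1)$ of $\sqrt{6n}/3$). Since $3k+4 = \Theta(\sqrt{n}) = o(n)$, the hypothesis $n \ge 3k+4$ of part~(c) holds for all sufficiently large $n$, so $\dim_k(P_n) \le \frac{2n+3k-1}{3k+2} + 1$ and hence $\thr(P_n) \le k + \dim_k(P_n) \le 2\sqrt{\tfrac{2}{3} n}\,(1+o(1))$. For the small values of $n$ falling under parts~(a) and~(b) of Theorem~\ref{kdim_path}, the throttling number is $O(1)$, which is consistent with the stated asymptotic formula.

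For the lower bound I would show that every choice of $k$ satisfies $k + \dim_k(P_n) \ge 2\sqrt{\tfrac{2}{3} n}\,(1-o(1))$. If $k > (n-4)/3$ this is immediate, since $k$ alone already exceeds $2\sqrt{2n/3}$ for large $n$; otherwise $n \ge 3k+4$, and the lower estimate on $\dim_k(P_n)$ together with the AM--GM bound on $\frac{3k+2}{3} + \frac{2n}{3k+2}$ gives the claim. Taking the minimum over $k$ finishes the proof. The argument is a routine adaptation, so I do not anticipate a genuine obstacle; the only points requiring care are verifying that the optimal and near‑optimal values of $k$ lie in the range $n \ge 3k+4$ where the closed form of Theorem~\ref{kdim_path}(c) applies, and checking that the boundary and small‑$n$ cases covered by parts~(a)--(b) do not violate the asymptotic estimate.
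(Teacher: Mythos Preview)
Your proposal is correct and follows essentially the same approach as the paper, which simply says the proof is analogous to that of Proposition~\ref{path} with the initial bound coming from Theorem~\ref{kdim_path}. Your version is in fact more detailed, carefully checking that the optimal $k$ falls in the range where part~(c) applies and that the boundary cases in parts~(a)--(b) are harmless.
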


\begin{proof}
The proof is analogous to the proof of Proposition \ref{path}, with the initial bound following from Theorem \ref{kdim_path}.
\end{proof}

Although we only have asymptotic bounds on $\thr(P_n)$ and $\thr(C_n)$, we are able to show that they are equal. First, using the algorithm described at the end of Section \ref{sec:complexity}, we determined the throttling numbers of paths and cycles up to order $50$, shown in Figure~\ref{graph1}. 

\begin{figure}[h]
    \centering
\includegraphics[width=0.75\textwidth]{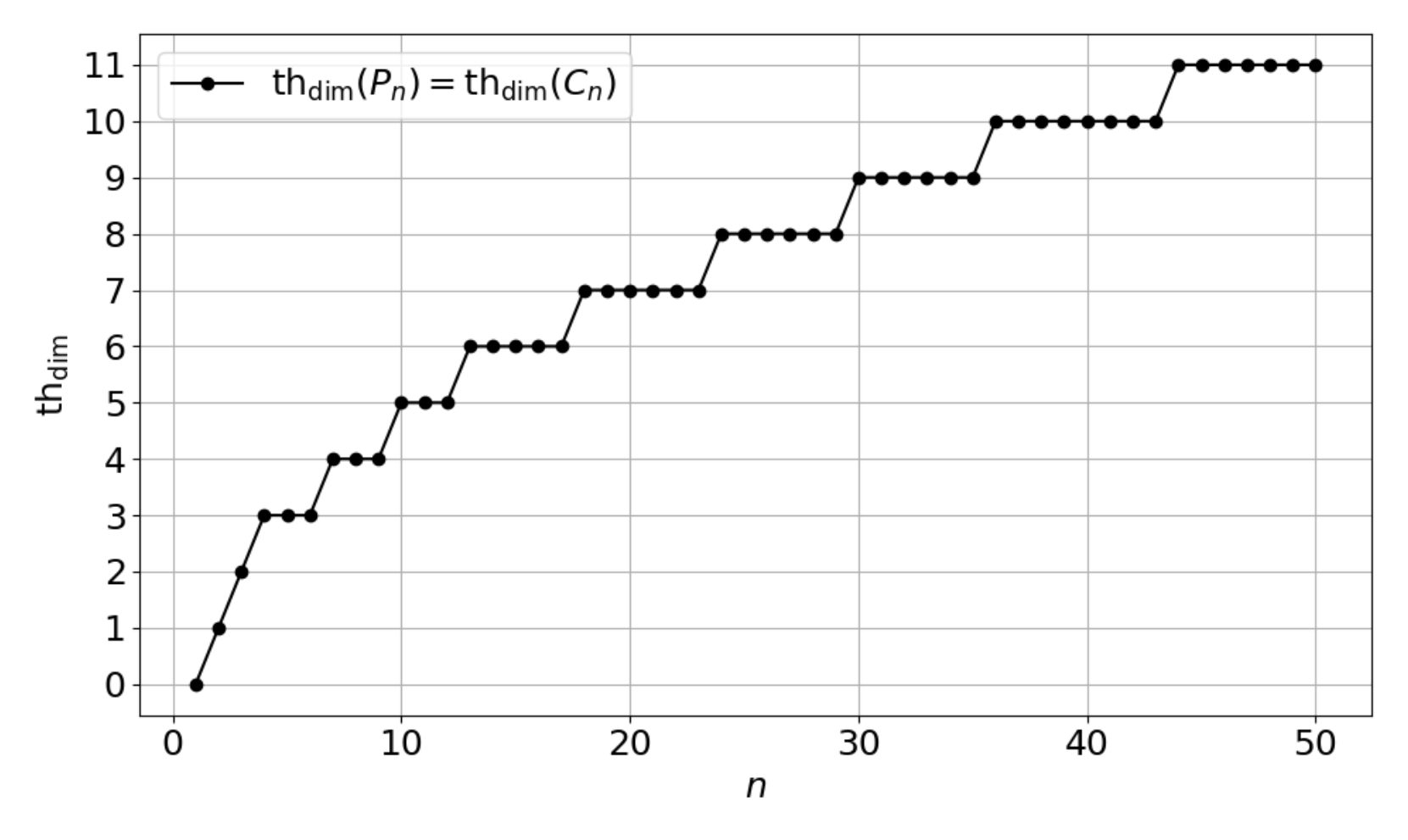}
    \caption{Metric dimension throttling numbers for $C_n$ and $P_n$, $1\leq n\leq 50$.}
    \label{graph1}
\end{figure}

    

\begin{prop}
For all $n$, $\thr(C_n) = \thr(P_n)$. 
\end{prop}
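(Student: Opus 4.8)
The plan is to prove $\thr(C_n)=\thr(P_n)$ by establishing two inequalities, each via a direct transfer of an optimal throttling configuration from one graph to the other. The key structural fact is that a path $P_n$ is obtained from $C_n$ by deleting one edge, and conversely $C_n$ is obtained from $P_n$ by adding one edge; adding an edge can only decrease distances, while the particular structure of paths and cycles lets us control how much.

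First I would show $\thr(C_n)\le\thr(P_n)$. Take an optimal configuration for $P_n$: a distance-$r$ resolving set $S$ with $|S|=\dim_r(P_n)$ and $r+|S|=\thr(P_n)$. I claim the same vertex set $S$, viewed inside $C_n$ (identifying $V(P_n)$ with $V(C_n)$ in the natural cyclic order), is a distance-$r$ resolving set for $C_n$ with the same radius. The point is that for any two vertices $x,y$ of $C_n$, if they were distinguished in $P_n$ by some landmark $v\in S$ via $\dist_r^{P}(v,x)\ne\dist_r^{P}(v,y)$, then since $\dist^{C}(\cdot,\cdot)\le\dist^{P}(\cdot,\cdot)$ and both truncate at $r+1$, I need to rule out the collapse $\dist_r^{C}(v,x)=\dist_r^{C}(v,y)$. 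Here I would use that in $C_n$ the distance from $v$ is determined by going clockwise or counterclockwise, whichever is shorter; a short case analysis (or instead the cleaner observation that one can always find a landmark that lies "between'' the relevant pair along the cycle) shows resolvability is preserved. Actually the slicker route for this direction: an optimal path configuration places landmarks in roughly equal blocks, so I would instead just directly exhibit a good configuration on $C_n$ using the asymptotic formulas — but since the proposition claims exact equality, the transfer argument is what is needed, and I would phrase it as: any distance-$r$ resolving set of $P_n$ remains one for $C_n$, hence $\dim_r(C_n)\le\dim_r(P_n)$ for every $r$, giving $\thr(C_n)\le\thr(P_n)$.

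For the reverse inequality $\thr(P_n)\le\thr(C_n)$, take an optimal configuration for $C_n$: a distance-$r$ resolving set $S$ with $r+|S|=\thr(C_n)$. Now I would delete an edge of $C_n$ to form $P_n$, but choose which edge carefully — namely an edge lying in a longest landmark-free arc, so that cutting there does the least damage. In fact, consider the arcs of $C_n$ between consecutive landmarks (cyclically); since these cover the cycle, the longest arc has length at least $n/|S|$, and by the throttling bound the radius $r$ is large enough relative to the arcs that are actually resolved. I would cut an edge in such a way that the resulting path's landmark set still resolves all pairs: in $P_n$ distances only increase, and the only way resolvability could fail is if two vertices that were distinguished in $C_n$ only "the short way around through the cut edge'' become indistinguishable — but choosing the cut edge at (or near) the midpoint of a maximal landmark-free arc, combined with the fact that every vertex in that arc is within distance $r$ of one of its two bounding landmarks, keeps everything resolved with the same radius $r$. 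This yields $\dim_r(P_n)\le|S|$ and hence $\thr(P_n)\le\thr(C_n)$.

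The main obstacle is the reverse direction: making the edge-deletion argument airtight, i.e. proving that there is always a choice of edge to cut under which the cycle's optimal resolving set survives in the path with unchanged radius. The risk is boundary pairs near the cut whose only distinguishing landmark relied on the wrap-around distance. I expect this to require either a careful pigeonhole choice of the cut location (inside a longest gap between landmarks, using that $2r+1$ dominates that gap length for resolved vertices, as in the proof of Theorem~\ref{diamd}) or, alternatively, a soft argument observing that the explicit optimal cycle configurations from Theorem~\ref{kdim_cycle} are "interval-based'' and unfold directly to optimal path configurations matching Theorem~\ref{kdim_path}. I would likely present the interval-unfolding version for concreteness, verifying that the extremal values of $k+\dim_k$ coincide for $P_n$ and $C_n$ at every $n$ by comparing the piecewise formulas, and using the small-order computational data in Figure~\ref{graph1} only as a sanity check rather than as part of the proof.
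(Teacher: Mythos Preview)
Your primary structural argument contains a genuine error in the first direction. The claim that ``any distance-$r$ resolving set of $P_n$ remains one for $C_n$'', and hence $\dim_r(C_n)\le\dim_r(P_n)$, is simply false. The cleanest counterexample is a single landmark: the set $\{v_1\}$ with $r=n-1$ is a distance-$r$ resolving set of $P_n$, but in $C_n$ it fails because vertices equidistant from $v_1$ on opposite sides collapse (e.g.\ $v_2$ and $v_n$ both sit at distance $1$). More generally, the explicit formulas in Theorems~\ref{kdim_cycle} and~\ref{kdim_path} show that the inequality goes the \emph{other} way: $\dim_k(P_n)\le\dim_k(C_n)$ for every $k\ge 1$, with equality except when $n\le k+2$, where $\dim_k(P_n)=1<2=\dim_k(C_n)$. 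So the direction you labelled ``easy'' is the one that actually needs work, and the transfer you sketched cannot supply it.

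Your fallback suggestion---compare the piecewise formulas directly---is in fact the paper's approach, and it works cleanly. Since $\dim_k(P_n)=\dim_k(C_n)$ whenever $k\le n-3$, it suffices to show that the throttling minimum is never attained at some $k\ge n-2$. The paper does this by observing that for $k\ge n-2$ one has $k+\dim_k\ge n-2$, whereas already $k=1$ gives $1+\dim_1(C_n)<n/2$ and $1+\dim_1(P_n)<n/2$ once $n$ is moderately large; hence the minimizing $k$ lies in the range where the two formulas agree. Contrary to your last sentence, the paper does use the computational data in Figure~\ref{graph1} as part of the proof, to dispose of the small cases where the crude bound $n/2$ does not beat $n-2$; if you want to avoid that, you would need a slightly finer argument for small $n$.
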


\begin{proof}
The result follows from Figure~\ref{graph1} for $n \le 50$. Now, suppose that $n > 50$. By Theorem~\ref{kdim_cycle}, we have $\dim_1(C_n) < \frac{n}{2}-1$. Similarly, by Theorem~\ref{kdim_path}, we have $\dim_1(P_n) < \frac{n}{2}-1$. Thus, $\thr(C_n) < \frac{n}{2}$ and $\thr(P_n) < \frac{n}{2}$. Note that $\dim_k(P_n) = \dim_k(C_n)$ for all $k$ except when $n \le k+2$. However, when $n \le k+2$, we have $k+\dim_k(C_n) \ge n-2$ and $k+\dim_k(P_n) \ge n-2$. 
Thus, when we throttle $C_n$ and $P_n$ for $n > 50$, we would not choose $k$ with $n \le k+2$, since we want to minimize $k+\dim_k(C_n)$ and $k+\dim_k(P_n)$, and we already have $\thr(C_n) < \frac{n}{2}$ and $\thr(P_n) < \frac{n}{2}$ by choosing $k = 1$. 
\end{proof}

Let $T_{p, \ell}$ denote the balanced spider with $p$ legs of length $\ell$. The skew zero forcing throttling number of balanced spiders was studied in \cite{skewth}. The same method from \cite{skewth} can be used to show that $\thr(T_{p, \ell}) = \Theta(\max(p, \sqrt{p \ell}))$. The case of unbalanced spiders requires a different approach. Below we characterize the standard metric dimension throttling number of unbalanced spiders with a constant number of legs.

\begin{prop}
If $S$ is any spider of order $n$ with $O(1)$ legs, then $\thr(S) = \Theta(\sqrt{n})$.
\end{prop}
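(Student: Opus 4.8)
The plan is to prove matching upper and lower bounds of order $\sqrt{n}$ for $\thr(S)$ when $S$ has a constant number of legs.

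\medskip

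\noindent\textbf{Lower bound.} The bound $\thr(S) = \Omega(\sqrt{n})$ should follow from the general diameter bound. Since $S$ has $O(1)$ legs and order $n$, some leg has length $\Omega(n)$, so $\diam(S) = \Omega(n)$; then Theorem~\ref{diamd} gives $\thr(S) = \Omega(\sqrt{n})$. (One could also argue directly: with $k$ landmarks and radius $r$, the longest leg, a path of length $\Omega(n)$, needs $\Omega(n/r)$ landmarks to resolve its interior vertices from each other, by the same pigeonhole/triangle-inequality argument as in the proof of Proposition~\ref{path}, forcing $k + r = \Omega(\sqrt{n})$.)

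\medskip

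\noindent\textbf{Upper bound.} The task is to exhibit, for a spider $S$ with $p = O(1)$ legs of lengths $\ell_1 \ge \ell_2 \ge \cdots \ge \ell_p$ summing to $n-1$, a distance-$r$ resolving set of size $O(\sqrt{n})$ with $r = O(\sqrt{n})$. The idea is to set $r = \Theta(\sqrt{n})$ and, on each leg $L_i$, place landmarks spaced $\Theta(\sqrt{n})$ apart, starting near the body; this uses $O(\ell_i / \sqrt{n} + 1)$ landmarks per leg, and since $\sum_i \ell_i < n$ and $p = O(1)$, the total is $O(\sqrt{n} + p) = O(\sqrt{n})$. Within a single leg, consecutive landmarks at distance $\le 2r+1$ apart resolve all the leg's interior vertices from each other by the standard path argument (the truncated distance to the two nearest landmarks pins down the position), and one also places a landmark at or near the far endpoint of each leg so its tip is resolved. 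The remaining issue is resolving a vertex on one leg from a vertex at the same distance from the body on another leg: we handle this by ensuring that on each leg the landmark pattern near the body is distinct — for instance by shifting the first landmark of leg $i$ to distance $i$ from the body (for $i = 1, \dots, p$), or equivalently using the body-side landmarks of the legs as a collective fingerprint; since the truncated distance vector of a leg-$i$ vertex to the body-side landmarks of all legs differs from that of a leg-$j$ vertex, distinct legs are separated. A cleaner variant: put one landmark on every leg at distance exactly $1$ from the body on all but one leg (an adjacency-style trick), which suffices to tell which leg any vertex lies on, then the within-leg landmarks finish the job.

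\medskip

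\noindent\textbf{Main obstacle.} The routine part is the path-interior resolution; the delicate part is the cross-leg resolution — making sure that two vertices equidistant from the body on two different legs receive different truncated distance vectors — while keeping the landmark count at $O(\sqrt{n})$ and not letting $r$ exceed $O(\sqrt{n})$. Because $p$ is constant this is affordable: $O(p) = O(1)$ extra "fingerprint" landmarks near the body, or perturbations of the spacing by $1, 2, \dots, p$ on the respective legs, resolve all cross-leg ambiguities at no asymptotic cost. I would organize the write-up as: (1) invoke Theorem~\ref{diamd} for the lower bound; (2) describe the landmark placement; (3) verify within-leg resolution via the truncated-distance argument from Proposition~\ref{path}; (4) verify cross-leg resolution using the near-body fingerprint; (5) tally the size and radius as $O(\sqrt{n})$, concluding $\thr(S) = \Theta(\sqrt{n})$.
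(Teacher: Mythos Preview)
Your proposal is correct and follows essentially the same route as the paper: the paper pigeonholes directly on the longest leg for the lower bound (your appeal to Theorem~\ref{diamd} is a clean shortcut to the same $\Omega(\sqrt{n})$), and for the upper bound both you and the paper place $O(\sqrt{n})$ landmarks at intervals of roughly $\sqrt{n}$ along the legs (the paper also puts one at the body and at each leaf) with $r\approx\sqrt{n}$. The cross-leg ``fingerprint'' machinery you propose is unnecessary, however: if $v$ lies on leg $i$ and sees some landmark $L$ on leg $i$ with $\dist(v,L)\le r$, then any vertex $w$ at the same distance from the body $b$ on a different leg $j$ satisfies $\dist(w,L)=\dist(w,b)+\dist(b,L)>\dist(v,L)$, so the single coordinate indexed by $L$ already separates $v$ from $w$---no shifted spacing or extra body-side landmarks are required.
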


\begin{proof}
We prove the lower bound first. Let $p = O(1)$ be the number of legs on $S$, $n$ be the order, and $k$ be the total number of landmarks. We know that at least one of the legs must have length at least $(n-1)/(p)$. 

By the pigeonhole principle, one of the legs with length at least $(n-1)/p$ must have at least $\frac{\frac{n-1}{p}-k}{k+1}$ consecutive vertices without a landmark. Thus $r \geq \frac{\frac{n-1}{p}-k}{2(k+1)}-2$ if the robot is using an $r$-sensor, or else at least two vertices would both be out of range of all landmarks. By the arithmetic-geometric mean inequality, we obtain $\thr(S) = \Omega(\sqrt{n})$.

To prove the upper bounds, place landmarks on the center and legs at intervals of $\ceil{\sqrt{n}}$, with at most one shorter interval formed by placing landmarks on the ends of legs. If $a_{i}$ denotes the length of the $i^{\text{th}}$ leg, then the number of landmarks used will be at most $1+\sum_{i=1}^{p}\ceil{\frac{a_{i}}{\ceil{\sqrt{n}}}} \leq (\sum_{i=1}^{p}\frac{a_{i}}{\ceil{\sqrt{n}}})+p+1 = \frac{n-1}{\ceil{\sqrt{n}}} + p+1 \leq \sqrt{n}+p+1$. 

The robot uses an $r$-sensor with $r = \ceil{\sqrt{n}}$, so it will detect at least two landmarks from which it can determine its initial location. This gives $\thr(S) = O(\sqrt{n})$. 
\end{proof}

We next study throttling for circulant graphs. Circulant graphs are of interest for metric dimension throttling since they are highly symmetric and thus it can be challenging to uniquely identify vertices. We focus on a special family of circulants $\Circ {n} { S }$, where $S\subset\mathbb{N}$ contains $1$ and a parameter $\ell$.

\begin{thm}
Let $n$ and $\ell$ be positive integers and $S \subset \left\{1, \dots, \ell\right\}$ be a subset that contains both $1$ and $\ell$. Then, $\thr(\Circ {n} { S }) = \Theta(\sqrt{n})$, where the constants depend on $\ell$.
\end{thm}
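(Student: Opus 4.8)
The plan is to prove the two bounds separately, both with constants depending on $\ell$. The lower bound $\thr(\Circ n S)=\Omega(\sqrt n)$ is essentially immediate from Theorem~\ref{diamd}: since $S\subseteq\{1,\dots,\ell\}$, every edge of $\Circ n S$ joins vertices at cyclic distance at most $\ell$, so any path is a walk whose steps have magnitude at most $\ell$, whence reaching a vertex at cyclic distance $\lfloor n/2\rfloor$ from a given one takes at least $\lfloor n/2\rfloor/\ell$ steps; thus $\Circ n S$ is connected (as $1\in S$) with diameter $d\ge n/(3\ell)$ for $n$ large, and since $V(\Circ n S)\subseteq T_{\dim}(\Circ n S)$, Theorem~\ref{diamd} gives $\thr(\Circ n S)=\Omega(\sqrt d)=\Omega(\sqrt{n/\ell})$.

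For the upper bound I will exhibit a distance-$r$ resolving set of size $O(\ell n/r)$ for a suitable $r$, optimize over $r$, and invoke the trivial bound $\thr(\Circ n S)\le n-1$ of Theorem~\ref{thm:min_max} for the finitely many $n$ that are bounded in terms of $\ell$; so assume $n$ is large. The construction places landmarks on $\Theta(n/(\ell r))$ \emph{blocks}, evenly spaced around the cycle at cyclic gap $D=2\ell r-\Theta(\ell^2)$, each block consisting of $2\ell^2$ consecutive vertices, all of which receive landmarks; this uses $\Theta(\ell n/r)$ landmarks, so taking $r=\Theta(\sqrt{\ell n})$ will give $\thr(\Circ n S)\le r+\Theta(\ell n/r)=\Theta(\sqrt{\ell n})=O(\sqrt n)$ once the set is shown to be distance-$r$ resolving. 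The gap $D$ is tuned so that every vertex lies within cyclic distance $D/2$ of some block \emph{and} its entire $r$-truncated distance vector to that block is in fact untruncated. Write $g(m)$ for the distance in $\Circ n S$ between two vertices at cyclic offset $m$, where $0\le m\ll n$; the verification rests on two facts about $g$: since $\ell\in S\subseteq\{1,\dots,\ell\}$ we have $m/\ell\le g(m)\le m/\ell+\ell$ with $g(k\ell)=k$ exactly, and since $1\in S$ we have $|g(m+1)-g(m)|\le 1$.

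Granting those facts, coverage within radius $r$ is immediate from the upper bound on $g$, and for injectivity one supposes $v_1\ne v_2$ have equal $r$-truncated distance vectors, picks a block $B$ within cyclic distance $D/2$ of $v_1$ (so that all of $v_1$'s distances to the vertices of $B$ are at most $r$), and observes that then $v_2$ has exactly the same distances to every vertex of $B$ as $v_1$ does, so $v_2$ too lies in an arc of length $O(\ell r)$ around $B$. Writing $v_1=a+j_1$ and $v_2=a+j_2$ with $a$ the left end of $B$: if $v_1$ or $v_2$ lies in $B$ it is a landmark and is trivially distinguished; if $v_1$ and $v_2$ lie on opposite sides of $B$, the profile of distances along $B$ would have to both increase and decrease (it rises by more than $\ell-1$ across $B$ in the direction away from either vertex), a contradiction; if they lie on the same side with $|j_1-j_2|\ge\ell$, then the block vertex $a+(j_1\bmod\ell)$ is at the exact distance $g(j_1-(j_1\bmod\ell))=\lfloor j_1/\ell\rfloor$ from $v_1$ but at strictly larger distance from $v_2$; and if they lie on the same side with $j_2-j_1=\delta\in\{1,\dots,\ell-1\}$, the profile along $B$ must be $\delta$-periodic and so takes at most $\delta$ distinct values, while being $1$-Lipschitz and rising by more than $\ell-1$ across $B$ it takes more than $\ell-1\ge\delta$ values --- again a contradiction.

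The conceptual crux, and the step I expect to require the most care, is the construction itself: one needs blocks of $\Theta(\ell^2)$ consecutive landmarks (a single landmark, or even $\ell$ landmarks, will not separate nearby vertices in a highly symmetric circulant), spaced so that the relevant distance profiles never reach the truncation cap $r+1$. Once the block length $\Theta(\ell^2)$ and the gap $2\ell r-\Theta(\ell^2)$ are fixed, the case analysis above is routine; it is precisely the factor $\ell^2$ in the block length that forces the hidden constants in the theorem to grow with $\ell$.
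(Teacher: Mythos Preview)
Your argument is correct. The lower bound via Theorem~\ref{diamd} is essentially the same pigeonhole/diameter computation the paper carries out directly, so nothing new there.

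For the upper bound you take a genuinely different route. The paper places $\lceil\sqrt n/\ell\rceil$ groups of only $\ell$ consecutive landmarks and uses a single observation to distinguish two vertices $u,v$ in the same sector: hop from $u$ by steps of exactly $\ell$ until you land on a landmark $q$ in the adjacent group; then $d(u,q)$ is an exact multiple-of-$\ell$ distance, and since $v$ lies strictly farther from $q$ along the cycle and every step has size at most $\ell$, one gets $d(v,q)>d(u,q)$. That one-line trick replaces your entire four-case analysis. Your approach instead inflates each block to $2\ell^2$ landmarks so that the profile $i\mapsto g(\text{offset}-i)$ is long enough to force the Lipschitz/periodicity contradiction; this works, but it is heavier and yields a throttling bound of order $\sqrt{\ell n}$, whereas the paper's construction gives roughly $2\sqrt n+\ell$, with the leading constant independent of $\ell$. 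So your closing remark that the $\ell^2$ block length is what ``forces the hidden constants to grow with $\ell$'' is an artifact of your construction, not of the problem: blocks of size $\ell$ already suffice.

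Two small points of hygiene in your write-up: in case~3 your sentence ``strictly larger distance from $v_2$'' only holds as stated when $j_2>j_1$; for $j_2<j_1$ you need to swap the roles and use $a+(j_2\bmod\ell)$ instead (the symmetry is there, just say so). And once case~3 is handled in both directions it actually already covers case~4, since for any $j_2>j_1$ the landmark $a+(j_1\bmod\ell)$ sits at exact distance $\lfloor j_1/\ell\rfloor$ from $v_1$ but at distance at least $\lceil(j_2-(j_1\bmod\ell))/\ell\rceil=\lfloor j_1/\ell\rfloor+1$ from $v_2$; your periodicity argument is correct but not needed.
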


\begin{proof}
Denote by $C=v_1,v_2,\dots,v_n,v_1$ a spanning cycle of $\Circ{n}{S}$ where the vertex indices of endpoints of every edge on $C$ differ by one modulo $n$.
We begin by describing a construction which shows that the upper bound is true. Place $\ell \lceil\frac{\sqrt{n}}{\ell}\rceil$ landmarks on $G$ in $\lceil\frac{\sqrt{n}}{\ell}\rceil$ groups of $\ell$ consecutive vertices on $C$, distributed as evenly as possible. See Figure~\ref{fig:421} for an example of this construction when $n = 17$ and $\ell = 2$.

Between every pair of consecutive groups of landmarks, let the consecutive vertices without landmarks be called a sector. We suppose that $n$ is sufficiently large that there are at least four sectors. If the robot uses an $r$-sensor for $r = \lceil \sqrt{n}+\ell\rceil$, then the robot can see the distance to the two full groups of adjacent landmarks bordering its sector if the robot is not already on a landmark. 

Suppose that $u$ and $v$ are distinct vertices on $G$ with no landmarks. If $u$ and $v$ are in different sectors, then clearly $u$ and $v$ can be distinguished by the closest landmark to $u$ in the group of landmarks that borders $u$'s sector and does not border $v$'s sector. If $u$ and $v$ are in the same sector, then they both can see the distance to the two full groups of adjacent landmarks bordering their sector. Without loss of generality, suppose that $u$ has $v$ to the left and a group $X$ of landmarks to the right. Starting at $u$, we hop $\ell$ vertices at a time to the right along $C$ until we land on a vertex $q$ in $X$. Note that $d(u, q) < d(v, q)$, since the set $S$ has maximum element $\ell$ and there are at least three sectors. Thus $u$ and $v$ can be distinguished, so the construction gives $\thr(\Circ {n} { S }) = O(\sqrt{n})$.

For the lower bound, let $k$ landmarks be distributed arbitrarily. By the pigeonhole principle, some pair of landmarks $L_1$ and $L_2$ have distance on $C$ at least $n/k$ and no other landmark lies on the shortest path between $L_1$ and $L_2$. Then the robot must use an $r$-sensor with $r \ge n/2k\ell\pm O(1)$, or else the centermost vertices on the shortest path between $L_1$ and $L_2$ on $C$ would be indistinguishable. By the arithmetic-geometric mean inequality, we have $\thr\left(\Circ {n} { S }\right) = \Omega\left(k+n/2\ell k\right) = \Omega\left(\sqrt{n/\ell}\right)$.
\end{proof}
\begin{figure}[h]
    \centering
    \includegraphics[width=0.4\textwidth]{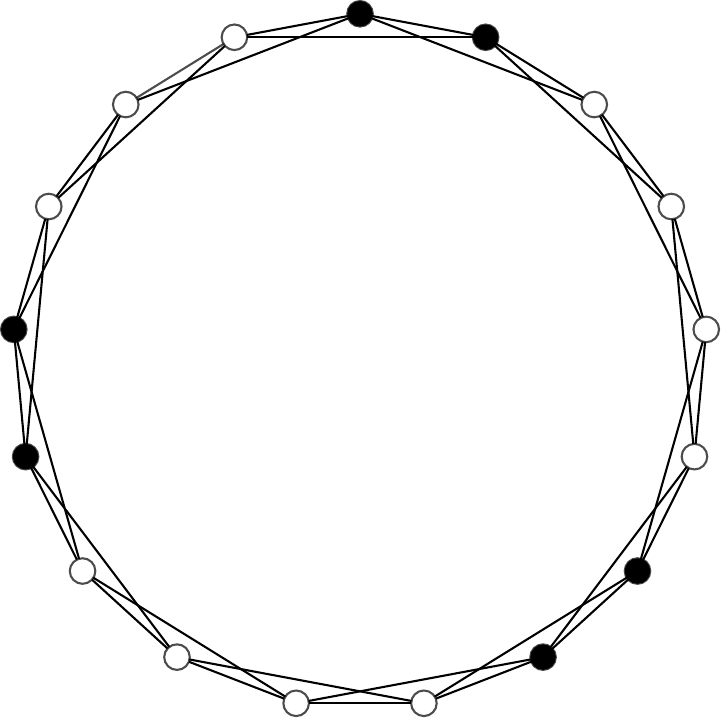}
    \caption{A metric dimension throttling configuration for $\Circ {n} { S }$ with $n=17$, $\ell=2$, $S=\{1,2\}$, and we place $6$ landmarks in $3$ groups of $2$ adjacent landmarks.}
    \label{fig:421}
\end{figure}

We conclude this section by investigating throttling for $d$-dimensional grids. For the results below, we associate every vertex $v=(v_1,v_2,\dots,v_d)$ of the graph $G=X_1\times X_2\times\dots\times X_d$ where $X_i$ is either $C_{n_i}$ or $P_{n_i}$ with a $d$-dimensional vector $(v_1,\dots,v_d)$ representing the projections $v_1,v_2,\dots,v_d$ of $v$ onto $X_1,X_2,\dots,X_d$, respectively.
\begin{lem}
\label{lem:upperbound}
    Let $G=X_1\times X_2\times\dots\times X_d$ where $X_i$ is either $C_{n_i}$ or $P_{n_i}$ for every $1\le i\le d$. Then $G$ has a distance-$r$ resolving set of size at most $\prod_{i=1}^d(2+\frac{n_i}{r})$.
\end{lem}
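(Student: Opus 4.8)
The plan is to build an explicit distance-$r$ resolving set by taking landmarks on a sublattice and then showing that a vertex's truncated distance vector determines its coordinates one factor at a time. First I would choose, for each coordinate $i$, a set $A_i \subseteq V(X_i)$ of at most $2 + n_i/r$ vertices that ``$r$-covers'' $X_i$ in the sense that every vertex of $X_i$ is within distance $r$ of some element of $A_i$, and moreover the elements of $A_i$ are close enough together that the truncated distances to them pin down a vertex's position; concretely, on a path $P_{n_i}$ take landmarks spaced roughly $r$ apart (about $n_i/r$ of them, plus endpoints), and on a cycle $C_{n_i}$ do the same, which needs at most $2 + n_i/r$ points. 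The candidate resolving set is then the ``grid'' $S = A_1 \times A_2 \times \cdots \times A_d$, but that has size $\prod(2 + n_i/r)$ which already matches the claimed bound, so I do not even need to be clever about reducing it — the work is entirely in verifying that $S$ is distance-$r$ resolving.

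The key step is the resolution argument. Fix two distinct vertices $u \neq v$; they differ in some coordinate, say $u_j \neq v_j$. I want a landmark $s \in S$ with $\dist_r(s,u) \neq \dist_r(s,v)$. The natural move is to use the fact that $\dist(s,u) = \sum_{i=1}^d \dist_{X_i}(s_i, u_i)$ for grid (Cartesian product) distance, choose $s_i = u_i$'s nearest neighbor in $A_i$ for $i \neq j$ so those coordinates contribute $\dist_{X_i}(s_i,u_i) \le r$ but are ``neutral'' in a controlled way, and then pick $s_j \in A_j$ witnessing $\dist_r$ distinguishing $u_j$ from $v_j$ within $X_j$. The subtlety is that truncation at $r+1$ interacts badly with summing over $d$ coordinates: even if coordinate $j$ separates $u$ and $v$ in $X_j$, the total distances $\dist(s,u)$ and $\dist(s,v)$ may both exceed $r+1$ and hence both truncate to $r+1$. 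So a single generic landmark of $S$ need not work, and one cannot literally reduce to the one-dimensional case.

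To get around this I would exploit that $S$ is a full product grid, so for \emph{any} target vertex $w$ and \emph{any} coordinate $j$, there is a landmark agreeing with $w$ in all coordinates except being the nearest $A_j$-point in coordinate $j$; such a landmark is within distance $r$ of $w$ in a single coordinate and exactly $0$ in the others, hence at distance $\le r$ total. The plan is: among all landmarks near $u$ (distance $\le r$), find one whose distance to $u$ and distance to $v$ differ — and argue this must exist because if \emph{every} landmark within distance $r$ of $u$ had the same distance to $v$, then $v$ would be ``indistinguishable from $u$ locally,'' which one can rule out since $u$ has a whole ball of landmarks around it (one per choice of nearest neighbor in each coordinate, giving at least $2^{(\#\text{coords where } u \text{ has two equidistant nearest } A_i\text{-points})}$, but at minimum the coordinate-$j$ family) and moving the landmark within coordinate $j$ changes its distance to $u$ but, since $u_j \neq v_j$, changes its distance to $v$ by a different increment. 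Making ``different increment'' precise is the crux: on a path this is immediate because moving a landmark one step toward $u_j$ either decreases or increases its distance, and the pattern relative to $v_j$ differs; on a cycle one has to handle wraparound, which is where I expect the main obstacle.

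The hardest part will be the cycle case of the resolution argument and the bookkeeping of truncation. I would handle it by noting $r \le n_i$ is the only interesting regime (otherwise $2 + n_i/r \ge 3$ trivially covers $C_{n_i}$ and truncation never fires inside that factor), choosing $A_i$ so consecutive landmarks are at distance at most $r+1$ around the cycle, and then observing that for any vertex $u$ of $G$ the set of landmarks at distance $\le r$ from $u$, restricted to varying only coordinate $j$, forms a contiguous arc of $A_j$ of length $\ge 2$ around $u_j$; as we slide along this arc the $X_j$-distance to $u_j$ is unimodal while the $X_j$-distance to $v_j$ has its minimum at a different point, so the two cannot be equal throughout the arc, and since the other coordinates contribute the identical amount to both $\dist(s,u)$ and $\dist(s,v)$, and at least one of these landmarks keeps both total distances below $r+1$ (because it is within $r$ of $u$ and $v_j$ is close to $u_j$ — here I may need $v$ also reasonably close, handled separately when $u,v$ are far apart by a coarser argument), we get the required $\dist_r(s,u) \neq \dist_r(s,v)$. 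Once resolution is established, the size bound $|S| = \prod_{i=1}^d |A_i| \le \prod_{i=1}^d (2 + n_i/r)$ is immediate and the lemma follows.
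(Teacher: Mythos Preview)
Your construction $S = A_1 \times \cdots \times A_d$ is the same as the paper's, and you correctly flag truncation as the central obstacle. But the fix you propose rests on a false claim: you assert that for any vertex $w$ and any coordinate $j$ there is a landmark in $S$ agreeing with $w$ in every coordinate $i \ne j$. Since a landmark in $S$ must have $i$-th coordinate in $A_i$, this would require $w_i \in A_i$ for all $i \ne j$, which fails for a generic $w$. The nearest landmark to $w$ is instead at distance $\sum_i \dist(w_i, A_i)$, which can be of order $dr/2$, not $\le r$. Concretely, take $d = 3$, each $X_i = P_n$, $r = 2$, and $A_i = \{0,2,4,\ldots\}$ (a valid distance-$2$ resolving set of $P_n$ of size at most $2 + n/2$): the vertices $u = (1,1,1)$ and $v = (1,1,3)$ both lie at distance at least $3$ from every landmark in $S$, so their $2$-truncated distance vectors are identically $(3,3,\ldots,3)$ and $S$ fails to distance-$2$ resolve them. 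Your plan of locating a separating landmark within distance $r$ of $u$ therefore cannot be carried out for this $S$, and the subsequent ``sliding along an arc of $A_j$'' argument never gets off the ground.

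The paper's proof also uses the product set but argues by induction on $d$, working with untruncated distances throughout: if $u_d = v_d$ it invokes the inductive hypothesis on the first $d-1$ factors; if $u_d \ne v_d$ it takes any landmark $\ell$ with $\dist(u,\ell) = \dist(v,\ell)$, replaces $\ell_d$ by some $\ell_d' \in L_d$ for which the $X_d$-contributions to $u$ and $v$ shift in the opposite direction, and concludes $\dist(u,\ell') \ne \dist(v,\ell')$. Note, however, that the paper's written argument only establishes that $L$ is an ordinary resolving set; the example above shows that a product of distance-$r$ resolving sets need not itself be distance-$r$ resolving, so the passage to the truncated conclusion is not automatic and is not addressed explicitly in the paper either.
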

\begin{proof}
For every $1\le i\le d$, select a distance-$r$ resolving set $L_i\subseteq V(X_i)$ of size at most $2+\frac{n_i}{r}$, and define a vertex set $L\subset V(G)$ consisting of every vertex $v=(v_1,\dots,v_d)$ where for every $1\le i\le d$ we have $v_i\in L_i$. We prove inductively that $L$ is a distance-$r$ resolving of $G$. The statement holds for $d=1$. Given a pair of vertices $u=(u_1,\dots,u_d),v=(v_1,\dots,v_d)\in V(G)$, if $u_d=v_d$ then by inductive hypothesis they can be resolved by the projection of $L$ onto the first $d-1$ coordinates. If there does not exist a landmark $\ell\in L$ such that $\dist(u,\ell)=\dist(v,\ell)$ then we are done. Otherwise for every $\ell=(\ell_1,\dots,\ell_d)\in L$ we have
\begin{equation*}
\begin{split}
\dist(u,\ell)&=\dist\left((u_d),(\ell_d)\right)+\dist\left((u_1,\dots,u_{d-1}),(\ell_1,\dots,\ell_{d-1})\right)\\
\dist(v,\ell)&=\dist\left((v_d),(\ell_d)\right)+\dist\left((v_1,\dots,v_{d-1}),(\ell_1,\dots,\ell_{d-1})\right)\\
\end{split}
\end{equation*}
We can select $\ell$ such that without loss of generality $\dist\left((u_d),(\ell_d)\right)<\dist\left((v_d),(\ell_d)\right)$, so $$
\dist\left((u_1,\dots,u_{d-1}),(\ell_1,\dots,\ell_{d-1})\right)>\dist\left((v_1,\dots,v_{d-1}),(\ell_1,\dots,\ell_{d-1})\right).
$$
On $X_d$ there exists another landmark $\ell'_d$ such that $\dist\left((u_d),(\ell'_d)\right)>\dist\left((v_d),(\ell'_d)\right)$. Hence let
$\ell'=(\ell_1,\ell_2,\dots,\ell_{d-1},\ell'_d)\in L$, and we have $\dist(u,\ell')>\dist(v,\ell')$.
\end{proof}

\begin{lem}
\label{lem:lowerbound}
    Let $d$ be a fixed positive integer and $G=X_1\times X_2\times\dots\times X_d$ be a graph of order $n$ where $X_i$ is either $C_{n_i}$ or $P_{n_i}$ for every $1\le i\le d$. Then,
    $$
    \thr(G)=\Omega\left(n^{1/(d+1)}\right).
    $$
\end{lem}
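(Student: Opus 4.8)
The plan is a direct packing (volume) argument. Suppose $\thr(G)$ is attained at radius $r$, so that there is a distance-$r$ resolving set $S$ of $G$ with $|S| = k = \dim_r(G)$ and $r + k = \thr(G)$; recall $r \ge 1$ by definition. The first ingredient is that distances in $G = X_1 \times \cdots \times X_d$ add across coordinates, i.e.\ $\dist(u,v) = \sum_{i=1}^d \dist_{X_i}(u_i,v_i)$ (this is exactly the additivity already used in the proof of Lemma~\ref{lem:upperbound}). Consequently, if $\dist(u,v) \le r$ then $\dist_{X_i}(u_i,v_i) \le r$ in each coordinate, and since in a path or a cycle at most $2r+1$ vertices lie within distance $r$ of a given vertex, the ball of radius $r$ about any vertex of $G$ contains at most $(2r+1)^d$ vertices.

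Next I would show that $S$, together with its radius-$r$ sensor, already sees almost all of $V(G)$. Any vertex $v$ with $\dist(S,v) > r$ has $r$-truncated distance vector $(r+1, \dots, r+1)$ with respect to $S$; since $S$ is distance-$r$ resolving and $T_{\dim}(G) = V(G)$, at most one vertex of $G$ can have this vector. Hence
\[
n = |V(G)| \le \Bigl|\bigcup_{s \in S} \{\, v : \dist(s,v) \le r \,\}\Bigr| + 1 \le k\,(2r+1)^d + 1 .
\]

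Finally I close the inequality. Write $t = \thr(G) = r + k$. Since $r \ge 1$ we have $k = t - r \le t$ and $r = t - k \le t$, so $n - 1 \le k(2r+1)^d \le t(2t+1)^d$. As $d$ is fixed, $t(2t+1)^d \le c_d\, t^{d+1}$ for some constant $c_d$ and all $t \ge 1$, whence $t^{d+1} \ge (n-1)/c_d$ and therefore $\thr(G) = t = \Omega\!\left(n^{1/(d+1)}\right)$.

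I do not expect a genuine obstacle here; the argument is robust. The only points requiring a little care are the trivial edge cases (very small $n$, and the possibility that the optimal $r$ exceeds the diameter, which only inflates $\thr(G)$ and so is harmless) and the bookkeeping that the constant hidden in $(2r+1)^d = O(r^d)$ is permitted to depend on the fixed dimension $d$, which the statement explicitly allows.
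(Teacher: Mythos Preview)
Your proof is correct and follows essentially the same packing/volume approach as the paper: bound the number of vertices each landmark can ``see'' within radius $r$, note that at most one vertex may be unseen, and conclude via an inequality relating $r$, $k$, and $n$. The only differences are cosmetic --- the paper uses the tighter $\ell_1$-ball count $\binom{r+d}{d}2^d$ in place of your $(2r+1)^d$, and finishes with AM--GM rather than the direct substitution $r,k \le t$ --- but neither affects the asymptotic conclusion for fixed $d$.
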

\begin{proof}
   Suppose that $r+\dim_r(G)=\thr(G)$ for some nonnegative integer $r$. Let $L$ be a minimum distance-$r$ resolving set of $G$. At most one vertex of $G$ is at distance at least $r+1$ to every vertex in $L$. Every vertex in $L$ is at distance no more than $r$ to at most $\binom{r+d}{d}2^d$ vertices, where $\binom{r+d}{d}$ is the number of ways to select a nonnegative integer at most $r$ and write it as the sum of $d$ ordered nonnegative integers. Thus we have
   $$
   \dim_r(G)\binom{r+d}{d}2^d\ge n-1.
   $$
   Hence
   $$
   \dim_r(G)(r+d)^d\ge\frac{d!}{2^d}(n-1).
   $$
   Therefore
\begin{equation*}
\begin{split}
   \dim_r(G)+r+d&=\dim_r(G)+d\cdot\frac{r+d}{d}\\
   &\ge (d+1)\left(\dim_r(G)\left(\frac{r+d}{d}\right)^d\right)^{\frac{1}{d+1}}\\
   &\ge(d+1)\left(\frac{d!}{2^dd^d}\right)^{\frac{1}{d+1}}\left(n-1\right)^{\frac{1}{d+1}}.
\end{split}
\end{equation*}
Thus the statement follows.
\end{proof}

We combine the upper and lower bounds of Lemmas \ref{lem:upperbound} and \ref{lem:lowerbound} to obtain the following characterization of the standard metric dimension throttling number of $d$-dimensional grids. 
    
\begin{thm}
\label{thmgrid}
    Let $d$ be a fixed positive integer, $G=X_1\times X_2\times\dots\times X_d$ where $X_i$ is either $C_{n_i}$ or $P_{n_i}$ for every $1\le i\le d$, and $n_1\ge n_2\ge\dots\ge n_d$. Then,
    $$
    \thr(G)=\Theta\left(\max_{1\le i\le d}\left\{\left(n_1n_2\dots n_i\right)^{\frac{1}{i+1}}\right\}\right).
    $$
\end{thm}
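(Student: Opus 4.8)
The plan is to obtain the upper bound directly from Lemma~\ref{lem:upperbound} and the lower bound from a family of applications of Lemma~\ref{lem:lowerbound}, one for each prefix of the list of factors. Throughout, write $M = \max_{1 \le i \le d} (n_1 n_2 \cdots n_i)^{1/(i+1)}$, and note $M \ge \sqrt{n_1} \ge 1$.

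For the upper bound I would apply Lemma~\ref{lem:upperbound} with $r = \lceil M \rceil$, a positive integer with $r \ge M$, which gives $\dim_r(G) \le \prod_{i=1}^d (2 + n_i/r) \le \prod_{i=1}^d (2 + n_i/M)$. Since $n_1 \ge \cdots \ge n_d$, the set of indices with $n_i > M$ is $\{1, \dots, t\}$ for some $t$; for those indices $2 + n_i/M < 3 n_i/M$, and for the remaining indices $2 + n_i/M \le 3$. Hence $\prod_{i=1}^d (2 + n_i/M) \le 3^d (n_1 \cdots n_t)/M^t$, and since $(n_1 \cdots n_t)^{1/(t+1)} \le M$ by definition of $M$ (interpreting the empty product as $1$ when $t = 0$, and using $M \ge 1$), this is at most $3^d M$. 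Therefore $\thr(G) \le r + \dim_r(G) \le \lceil M \rceil + 3^d M = O(M)$, with the constant depending only on the fixed dimension $d$.

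For the lower bound, fix $i \in \{1, \dots, d\}$, choose arbitrary $a_{i+1} \in V(X_{i+1}), \dots, a_d \in V(X_d)$, and let $H_i$ be the subgraph of $G$ induced on the vertices whose last $d - i$ coordinates equal $a_{i+1}, \dots, a_d$. Then $H_i \cong X_1 \times \cdots \times X_i$, and since distance in a Cartesian product of paths and cycles is the sum of the coordinate distances, $H_i$ is an isometric subgraph of $G$. The key claim is $\thr(H_i) \le \thr(G)$. To prove it I would take a minimum distance-$r$ resolving set $L$ of $G$ with $r + \dim_r(G) = \thr(G)$ and project it: replace each $\ell = (\ell_1, \dots, \ell_d) \in L$ by $\ell' = (\ell_1, \dots, \ell_i, a_{i+1}, \dots, a_d) \in V(H_i)$. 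For $u, v \in V(H_i)$ one has $\dist(x, \ell) = \dist(x, \ell') + c(\ell)$ for $x \in \{u,v\}$, where $c(\ell) = \sum_{j>i}\dist(a_j, \ell_j) \ge 0$ is independent of $x$; a short case analysis on $\dist_r$ then shows that if $\dist_r(u, \ell) \ne \dist_r(v, \ell)$ then also $\dist_r(u, \ell') \ne \dist_r(v, \ell')$ — the point being that the smaller of the two truncated distances to $\ell$ is at most $r$, so subtracting the nonnegative constant $c(\ell)$ keeps it strictly below the truncation threshold and preserves the strict inequality. Thus the projected set is a distance-$r$ resolving set of $H_i$ of size at most $|L|$, so $\thr(H_i) \le \thr(G)$. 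Applying Lemma~\ref{lem:lowerbound} to the $i$-dimensional grid $H_i$ of order $n_1 \cdots n_i$ gives $\thr(G) \ge \thr(H_i) = \Omega\big((n_1 \cdots n_i)^{1/(i+1)}\big)$ for every $i \le d$; taking the maximum over the finitely many values of $i$ yields $\thr(G) = \Omega(M)$, which together with the upper bound proves the theorem.

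The hard part will be the monotonicity claim $\thr(H_i) \le \thr(G)$, i.e.\ checking that pushing the landmarks onto the slab $H_i$ still resolves it even after the radius-$r$ truncation; this is where the case analysis on $\dist_r$ is needed, and it is the grid analogue of the subtree-monotonicity in Proposition~\ref{thm:subtree}. An alternative route, if one prefers to avoid the projection, is to redo the volume-counting argument of Lemma~\ref{lem:lowerbound} inside a single slab of the grid, but reusing the lemma verbatim via projection is cleaner. Everything else is substitution into the two lemmas together with an AM--GM-style estimate.
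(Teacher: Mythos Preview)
Your proposal is correct and follows essentially the same route as the paper: Lemma~\ref{lem:upperbound} with $r\approx M$ for the upper bound, and Lemma~\ref{lem:lowerbound} applied to the prefix grid $X_1\times\cdots\times X_i$ together with the monotonicity $\thr(H_i)\le\thr(G)$ for the lower bound. In fact your projection argument for that monotonicity step is more detailed than the paper's, which simply asserts $\thr(G)\ge\thr(G')$ without justification; your case analysis on $\dist_r$ is the right way to fill that gap.
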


\begin{proof}
    Define
    $
    i=\arg\max_{1\le j\le d}\left(\prod_{k=1}^j n_k\right)^{\frac{1}{j+1}}
    $
    and
    $
    r=\max_{1\le j\le d}\left(\prod_{k=1}^j n_k\right)^{\frac{1}{j+1}}$.

For the lower bound, consider $G'=X_1\times X_2\times\dots\times X_i$. Then 
$$
\thr(G)\ge\thr(G')=\Omega\left(\left(n_1n_2\dots n_i\right)^\frac{1}{i+1}\right),
$$
where the last equality follows from Lemma~\ref{lem:lowerbound}.

For the upper bound, we will show that $r\le n_j$ for every integer $j\in[1,i]$ and $n_k\le r$ for every integer $k\in[i,d]$. With these and Lemma~\ref{lem:upperbound}, there is a distance-$\ceil{r}$ resolving set of $G$ of size at most
    \begin{equation*}
    \begin{split}
       &(2+\frac{n_1}{r})(2+\frac{n_2}{r})\dots(2+\frac{n_i}{r})(2+\frac{n_{i+1}}{r})\dots(2+\frac{n_{d}}{r})\\
       &\le \left(\frac{3}{r}\right)^in_1n_2\dots n_i3^{d-i}.
    \end{split}
    \end{equation*}

Therefore, 
\begin{equation*}
   \begin{split}
\thr(G)&\le \ceil{r}+\left(\frac{3}{r}\right)^in_1n_2\dots n_i3^{d-i}\\
&=\left(n_1\dots n_i\right)^\frac{1}{i+1}+3^d\left(n_1\dots n_i\right)^\frac{1}{i+1}\\
&=\Theta\left(\left(n_1\dots n_i\right)^\frac{1}{i+1}\right).
   \end{split} 
\end{equation*}
    First, we show that $r\le n_i$. If $i=1$ then it is true. If $i>1$, then
    \begin{equation*}
    \begin{split}
    \left(n_1\dots n_{i-1}\right)^{\frac{1}{i}} &\le \left(n_1\dots n_{i}\right)^{\frac{1}{i+1}}\\
    n_1n_2\dots n_{i-1}&\le n_i^i\\
    n_1n_2\dots n_{i}&\le n_i^{1+i}\\
    r=\left(n_1n_2\dots n_{i}\right)^\frac{1}{i+1}&\le n_i.\\
    \end{split}
    \end{equation*}

    Next, we show that if $i<d$ then $n_{i+1}\le r$. This follows from
    \begin{equation*}
       \begin{split}
           \left(n_1n_2\dots n_{i+1}\right)^\frac{1}{i+2}&\le\left(n_1n_2\dots n_i\right)^\frac{1}{i+1}\\
           \left(n_1n_2\dots n_{i+1}\right)^{i+1}&\le\left(n_1n_2\dots n_i\right)^{i+2}\\
           n_{i+1}^{i+1}&\le n_1n_2\dots n_i\\
           n_{i+1}&\le\left(n_1\dots n_i\right)^\frac{1}{i+1}=r.
       \end{split} 
    \end{equation*}
\end{proof}

\begin{cor}
Let $d$ be a fixed positive integer and 
    $G=X_1\times X_2\times\dots\times X_d$ be a graph of order $n$ where $X_i$ is either $C_{n_i}$ or $P_{n_i}$ for every $1\le i\le d$ and $n_1\ge n_2\ge\dots\ge n_d$.
    Then $\thr(G)=O(n^{1/2})$ and $\thr(G)=\Omega(n^{1/(d+1)})$. Moreover, $\thr(G)=\Theta(n^{1/2})$ if and only if $n_1=\Theta(n)$ and $n_i=O(1)$ for $i=2,3,\dots,d$, and $\thr(G)=\Theta\left(n^{1/(d+1)}\right)$ if and only if $n_1n_2\dots n_i=O\left(n^{(i+1)/(d+1)}\right)$ for $i=1,2,\dots,d-1$.
\end{cor}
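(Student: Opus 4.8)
The plan is to read off the entire corollary from the closed form $\thr(G)=\Theta\!\bigl(\max_{1\le i\le d}(n_1n_2\cdots n_i)^{1/(i+1)}\bigr)$ proved in Theorem~\ref{thmgrid}, so that the work reduces to comparing the $d$ exponents appearing in that maximum against $n^{1/2}$ and $n^{1/(d+1)}$. First I would record two elementary facts valid for every grid in the family. Since $n_1\cdots n_i\le n$, we have $(n_1\cdots n_i)^{1/(i+1)}\le n^{1/(i+1)}\le n^{1/2}$ for all $i\ge 1$, which gives the blanket upper bound $\thr(G)=O(n^{1/2})$. And the $i=d$ term equals $(n_1\cdots n_d)^{1/(d+1)}=n^{1/(d+1)}$, so the maximum is always at least $n^{1/(d+1)}$, giving $\thr(G)=\Omega(n^{1/(d+1)})$ (this also follows directly from Lemma~\ref{lem:lowerbound}).

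For the $\Theta(n^{1/2})$ characterization, the ``if'' direction is immediate: if $n_1=\Theta(n)$, the $i=1$ term $n_1^{1/2}$ is $\Theta(n^{1/2})$, and together with the blanket upper bound this forces $\thr(G)=\Theta(n^{1/2})$. For the ``only if'' direction I would show that no term of index $i\ge 2$ can be of order $n^{1/2}$: if $(n_1\cdots n_i)^{1/(i+1)}=\Omega(n^{1/2})$, then $n\ge n_1\cdots n_i=\Omega(n^{(i+1)/2})$, i.e.\ $n^{(i-1)/2}=O(1)$, which is impossible for $i\ge 2$ as $n\to\infty$. Hence if $\thr(G)=\Theta(n^{1/2})$ then, for all sufficiently large $n$, the $i=1$ term alone must be $\Omega(n^{1/2})$, so $n_1=\Omega(n)$; combined with $n_1\le n$ this yields $n_1=\Theta(n)$, and then $\prod_{i\ge 2}n_i=n/n_1=O(1)$, so each $n_i=O(1)$ for $i\ge 2$.

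For the $\Theta(n^{1/(d+1)})$ characterization, Theorem~\ref{thmgrid} together with the always-valid lower bound $\thr(G)=\Omega(n^{1/(d+1)})$ shows that $\thr(G)=\Theta(n^{1/(d+1)})$ is equivalent to $\max_{1\le i\le d}(n_1\cdots n_i)^{1/(i+1)}=O(n^{1/(d+1)})$, which is in turn equivalent to $(n_1\cdots n_i)^{1/(i+1)}=O(n^{1/(d+1)})$ for each $i\in\{1,\dots,d\}$. The case $i=d$ holds identically, and the remaining conditions $i=1,\dots,d-1$ are precisely $n_1\cdots n_i=O(n^{(i+1)/(d+1)})$ after raising to the $(i+1)$-st power. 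Combined with the first paragraph, this establishes all four assertions of the corollary.

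The only delicate point, and the place where I expect to spend the most care, is the quantifier bookkeeping in the ``only if'' direction for $\Theta(n^{1/2})$: one must ensure the implied constants are uniform over the family, allow the maximizing index $i$ to vary from grid to grid, and phrase the ``$n\to\infty$'' argument ruling out large-index terms so that the finitely many small grids are harmlessly absorbed into the constants. Everything else is routine manipulation of the formula from Theorem~\ref{thmgrid}.
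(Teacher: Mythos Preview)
Your proposal is correct and follows exactly the intended route: the paper states this result as an unproved corollary of Theorem~\ref{thmgrid}, and your argument simply unpacks the formula $\thr(G)=\Theta\bigl(\max_i(n_1\cdots n_i)^{1/(i+1)}\bigr)$ in the obvious way, using $n_1\cdots n_i\le n$ for the $O(n^{1/2})$ bound and the $i=d$ term for the $\Omega(n^{1/(d+1)})$ bound. The two characterizations are handled correctly as well; your observation that every term with $i\ge2$ is bounded by $n^{1/(i+1)}\le n^{1/3}$ makes the ``only if'' direction for $\Theta(n^{1/2})$ clean, and the $\Theta(n^{1/(d+1)})$ equivalence is a direct rewriting of the termwise $O$-conditions.
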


The $d$-dimensional hypercube graph $Q_d=P_2\times\ldots\times P_2$ is a special case of the grid graphs described in Theorem \ref{thmgrid}. Using the algorithm described at the end of Section \ref{sec:complexity}, we determined the exact throttling numbers of $d$-dimensional hypercubes $Q_d$ for $1 \le d \le 5$, shown in Figure \ref{graph2}.

  

\begin{figure}[h]
    \centering
\includegraphics[width=0.75\textwidth]{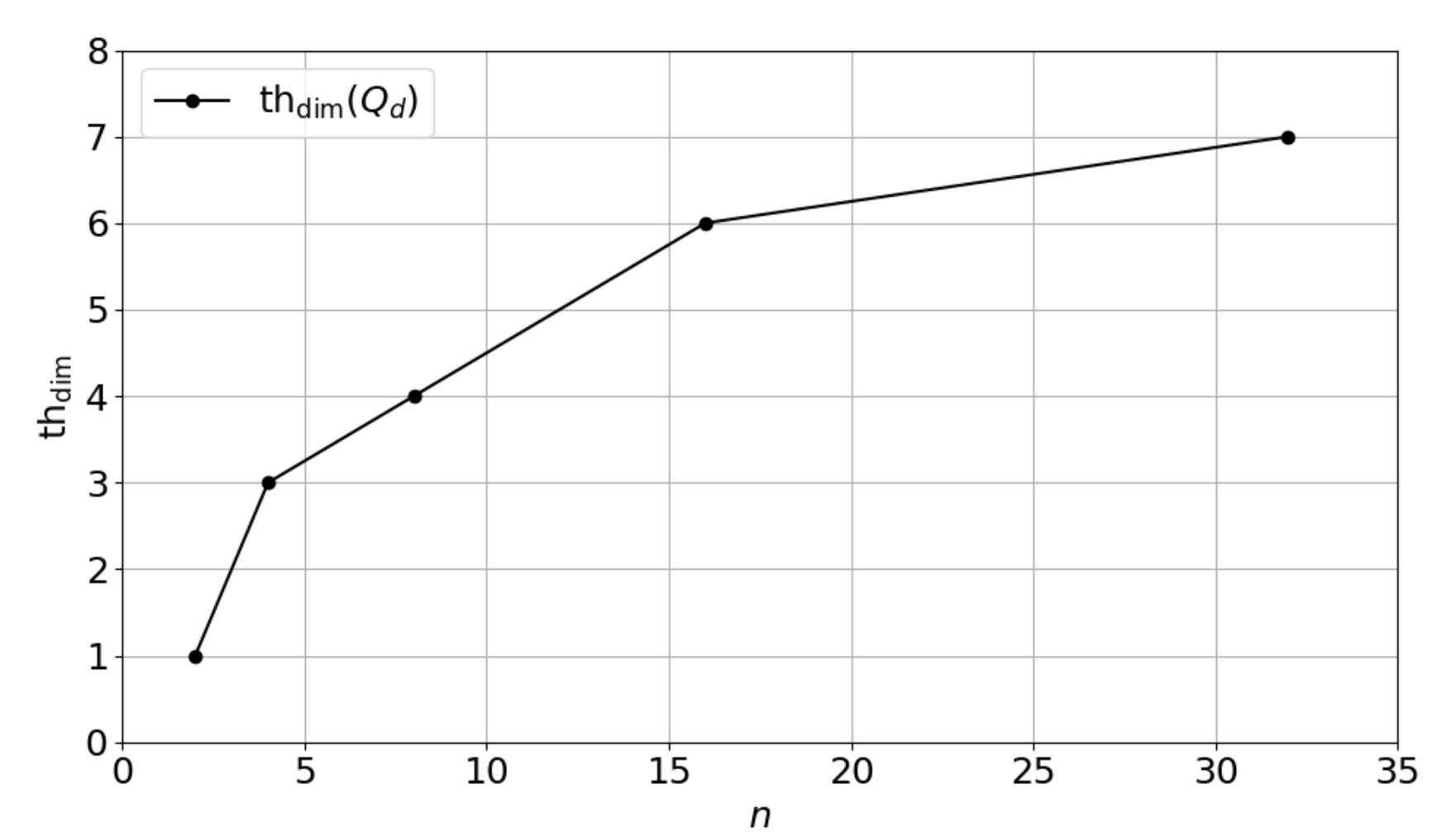}
    \caption{Metric dimension throttling numbers for $Q_d$, $1\leq d\leq 5$.}
    \label{graph2}
\end{figure}

\section{Edge metric dimension throttling}\label{sec:edge}

In this section, we give results about the edge metric dimension throttling numbers of several families of graphs.

\begin{prop}
For all positive integers $n$,
$\thre(K_n) = \begin{cases}
   0 & n=1,2\\
   n-1& n>2.
\end{cases}$
\end{prop}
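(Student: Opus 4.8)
The plan is to split on the order $n$. For $n\in\{1,2\}$ the complete graph $K_n$ has at most one edge, so $|T_{\edim}(K_n)| = |E(K_n)|\le 1$; applying Theorem~\ref{thmjoined} with $\xdim=\edim$ (where $T_{\edim}(K_n)=E(K_n)$) immediately gives $\thre(K_n)=0$. This disposes of the base cases, and the rest of the argument only needs to treat $n>2$, where the hypothesis guarantees a third vertex is available.

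For $n>2$ I would establish the upper bound $\thre(K_n)\le n-1$ directly from Theorem~\ref{thm:min_max}: every element of $T_{\edim}(K_n)$ is an edge and hence has size exactly $2$, so the "equal size" hypothesis of that theorem applies. For the matching lower bound, I would use $\thre(K_n)\ge\edim(K_n)$ (the definitional inequality, also recorded in Theorems~\ref{thm:min_max_gen} and~\ref{thm:min_max}) and show $\edim(K_n)\ge n-1$ by a short direct argument. Suppose $S\subseteq V(K_n)$ with $|S|\le n-2$; then there are two distinct vertices $x,y\notin S$, and since $n\ge 3$ we may pick a third vertex $z$. Consider the edges $e=xz$ and $f=yz$. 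For any $v\in S$ we have $v\notin\{x,y\}$, so $\dist(v,e)=0$ exactly when $v=z$, and likewise $\dist(v,f)=0$ exactly when $v=z$; in every case $\dist(v,e)=\dist(v,f)\in\{0,1\}$. Thus no $v\in S$ distinguishes $e$ from $f$, so $S$ is not an edge resolving set. Hence $\edim(K_n)\ge n-1$, and combining with the upper bound yields $\thre(K_n)=n-1$.

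There is no genuine obstacle here; the proof is routine. The only points needing a little care are (i) routing the cases $n=1,2$ through the "at most one edge" characterization (Theorem~\ref{thmjoined}) rather than the $n-1$ formula, and (ii) noting that the auxiliary vertex $z$ in the lower-bound construction exists precisely because $n\ge 3$. As an alternative to invoking Theorem~\ref{thm:min_max}, one could observe that in $K_n$ every edge-to-vertex distance lies in $\{0,1\}$, so $\dist_r=\dist$ for all $r\ge 0$, whence $\thre(K_n)=\edim_0(K_n)=\edim(K_n)$ and the problem reduces entirely to computing $\edim(K_n)$; but this extra observation is not required.
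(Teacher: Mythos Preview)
Your proposal is correct and follows essentially the same approach as the paper: the paper's one-line proof invokes Theorem~\ref{thm:min_max} for the sandwich $\edim(K_n)\le\thre(K_n)\le n-1$ and cites the observation (stated right after Theorem~\ref{thm:min_max}) that only $u$ and $v$ distinguish edges $uw$ and $vw$ in $K_n$, which is exactly your lower-bound argument. Your explicit handling of $n\in\{1,2\}$ via Theorem~\ref{thmjoined} is a welcome clarification that the paper leaves implicit.
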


\begin{proof}
This follows from Theorem~\ref{thm:min_max}, since the upper and lower bounds are matching when $\xdim = \edim$ and $G = K_n$ for all integers $n > 2$.
\end{proof}

\begin{prop}
    For all positive integers $s, t$, $\thre(K_{s,t}) = s+t-2$.
\end{prop}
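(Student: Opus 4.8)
The plan is to establish matching upper and lower bounds. For the lower bound, I would first argue that $\edim(K_{s,t}) \ge s+t-3$ is too weak, and instead aim directly for $\thre(K_{s,t}) \ge s+t-2$. Observe that in $K_{s,t}$ the diameter is $2$, so any $r$-sensor with $r \ge 2$ sees the entire graph and $\edim_r(K_{s,t}) = \edim_0(K_{s,t})$ would be needed; thus the only regimes to check are $r = 0$ and $r = 1$ (and $r \ge 2$ reduces to $r=1$ in value since adding more radius cannot help once $r$ exceeds the diameter). I would show that $\edim_0(K_{s,t}) = s+t-2$ by a neighborhood argument: two parallel edges $uw$ and $u'w$ sharing an endpoint $w$ (with $u, u'$ in the same part) have $\dist(v, uw) \ne \dist(v, u'w)$ only if $v \in \{u, u'\}$, so any distance-$0$ edge resolving set must contain all but one vertex of each part, giving $\ge (s-1)+(t-1) = s+t-2$. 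For $r = 1$, I would show $\edim_1(K_{s,t}) \ge s+t-3$, so that $1 + \edim_1(K_{s,t}) \ge s+t-2$ as well; this requires checking that even with the $1$-sensor, one cannot drop below $s+t-3$ landmarks, again by examining pairs of edges through a common vertex whose truncated distance vectors coincide outside $\{u,u'\}$ (here distances are only $0$, $1$, or $2=r+1$, so truncation is essentially no loss and the same neighborhood argument applies).

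For the upper bound, I would place landmarks on all but one vertex of each part — that is $s+t-2$ landmarks — and verify this is a distance-$0$ edge resolving set, so $\thre(K_{s,t}) \le 0 + (s+t-2)$. Given two distinct edges $e = xy$ and $f = x'y'$ with $x, x'$ in one part and $y, y'$ in the other, if $e \ne f$ then they differ in at least one endpoint, say $x \ne x'$; at least one of $x, x'$ carries a landmark (since at most one vertex of that part is landmark-free), and that landmark is at distance $0$ from one of the two edges and distance $1$ from the other, distinguishing them. The only subtlety is the degenerate small cases ($s = 1$ or $t = 1$, where $K_{1,t}$ is a star), which I would dispatch separately, noting that the formula $s+t-2$ still holds and matches Theorem~\ref{thm:min_max} where applicable.

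The main obstacle is the lower bound in the $r = 1$ regime: one must rule out the possibility that the $1$-sensor lets the robot resolve edges with fewer than $s+t-3$ landmarks, which is not immediately implied by the $r=0$ computation since truncation could in principle collapse the problem. I expect this to be handled by observing that, because $\operatorname{diam}(K_{s,t}) = 2 = r+1$ when $r = 1$, the $1$-truncated distance vectors are identical to the true distance vectors, so $\edim_1(K_{s,t}) = \edim(K_{s,t}) = s+t-2$ in fact (citing the known value of $\edim(K_{s,t})$ if available, e.g.\ from \cite{kelenc} or \cite{zubrilina}), which makes $1 + \edim_1(K_{s,t}) = s+t-1 > s+t-2$ and confirms the optimum is at $r = 0$. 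Once that reduction is clear, the proof is just the neighborhood counting argument in both directions.
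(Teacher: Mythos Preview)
Your proposal is correct and takes essentially the same approach as the paper: both obtain the lower bound from $\thre(K_{s,t}) \ge \edim(K_{s,t}) = s+t-2$ (citing \cite{kelenc}) and the upper bound via a distance-$0$ edge resolving set of size $s+t-2$. The paper's argument is more streamlined --- it bypasses your case analysis over $r$ by invoking the general bound $\thre \ge \edim$ from Theorem~\ref{thm:min_max} directly, and for the upper bound it simply observes that every vertex--edge distance in $K_{s,t}$ is $0$ or $1$, so $\edim_0(K_{s,t}) = \edim(K_{s,t})$ automatically, making your explicit verification of the landmark configuration and the separate treatment of $r=1$ unnecessary.
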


\begin{proof}
    By Theorem~\ref{thm:min_max}, $\thre(K_{s,t}) \ge \edim(K_{s,t})$, and by a result in \cite{kelenc},  $\edim(K_{s,t}) = s+t-2$ . Moreover, it is easy to see that every edge $e \in E(K_{s,t})$ has distance $0$ or $1$ to every vertex $v \in V(K_{s,t})$, so $\thre(K_{s,t}) \le \edim(K_{s,t})+0 = \edim(K_{s,t})$.
\end{proof}

We next show that for cycles and paths,  the edge metric dimension throttling number is approximately the same as the standard metric dimension throttling number.

\begin{figure}[h]
    \centering
    \includegraphics[width=0.3\textwidth]{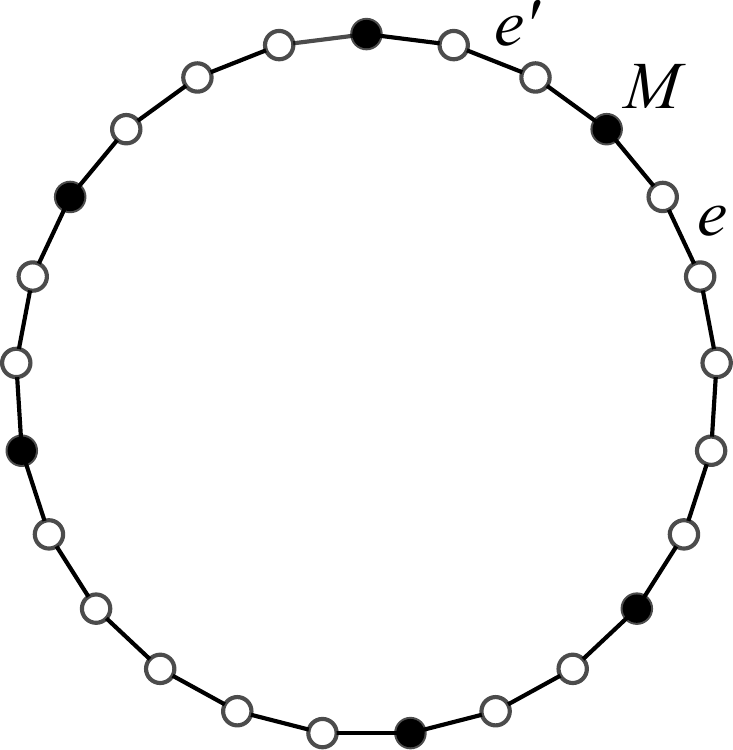}
    \caption{An optimal edge metric dimension throttling configuration for $C_n$ with $n=25$, $k=3$, $r=3$, and the landmarks are at distance $3$ and $6$ alternating, and an edge $e$ and $e'$ from the second part of the proof are marked.}
    \label{fig:58}
\end{figure}

\begin{thm}
    For all integers $n \geq 3$,  
    \begin{itemize}
\item $\thre(C_n) = 2\sqrt{\frac{2n}{3}}\paren{1\pm o(1)}$,
\item $\thre(P_n) = 2 \sqrt{\frac{2}{3} n}(1\pm o(1))$. 
    \end{itemize}
\end{thm}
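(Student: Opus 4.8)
The plan is to follow the template already established for standard metric dimension in Proposition~\ref{path}, replacing the truncated metric dimension of cycles and paths with the truncated \emph{edge} metric dimension. The key observation is that for a cycle or path, an edge at ``position'' $i$ (say, the edge between $v_i$ and $v_{i+1}$) behaves essentially like a vertex sitting at the half-integer coordinate $i+\tfrac12$: the distance from a landmark $v_k$ to this edge is $\min(|k-i|, |k-i-1|)$ on a path, and the analogous wrap-around expression on a cycle. Thus a distance-$r$ edge resolving set for $C_n$ or $P_n$ is, up to an additive $O(1)$ in both $r$ and the set size, the same combinatorial object as a distance-$r$ resolving set, and one expects $\edim_r(C_n) = \dim_r(C_n) + O(1)$ and $\edim_r(P_n) = \dim_r(P_n) + O(1)$ for all $n$ and $r$.

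First I would make this precise: fix $r$ and a candidate landmark set $S$; the edges of the path/cycle that are distinguished from one another by $S$ under truncation at $r+1$ are exactly those whose ``$r$-truncated distance profiles'' differ, and a short case analysis (analogous to the one used by Frongillo et al.\ \cite{frongillo} to derive Theorems~\ref{kdim_cycle} and~\ref{kdim_path}) shows that between two consecutive landmarks at cyclic distance $g$, one can $r$-edge-resolve all the intervening edges iff $g$ is bounded by essentially the same threshold $3r+O(1)$ that governs vertices. This yields $\edim_r(C_n) = \frac{2n+3r-1}{3r+2} + O(1)$ and similarly for $P_n$, matching the expansions used in Propositions~\ref{path} and the $P_n$ analogue up to the $O(1)$ error term, which is harmless for the claimed asymptotics.

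Then the conclusion is immediate: $r + \edim_r(C_n) = \frac{3r+2}{3} + \frac{2n}{3r+2} + O(1)$, and minimizing over $r$ via the arithmetic mean--geometric mean inequality (choosing $r = \Theta(\sqrt{n})$, specifically $3r+2 \approx \sqrt{6n}$) gives $\thre(C_n) = 2\sqrt{\tfrac{2}{3}n}(1\pm o(1))$, and identically $\thre(P_n) = 2\sqrt{\tfrac{2}{3}n}(1\pm o(1))$ from the path bounds; see Figure~\ref{fig:58} for the optimal configuration on $C_{25}$.

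The main obstacle is the low-level edge-resolving case analysis establishing $\edim_r \le \dim_r + O(1)$ and $\edim_r \ge \dim_r - O(1)$ on paths and cycles --- in particular, handling the boundary edges of $P_n$ and the slightly different parity/rounding behavior that edges introduce near a gap's midpoint. One has to be careful that an edge exactly at the midpoint of a long gap between two landmarks is or is not resolvable in the same regime as the midpoint vertex; but since any discrepancy only shifts the gap threshold by a bounded amount, it is absorbed into the $O(1)$ term and does not affect the leading constant $2\sqrt{2/3}$. Alternatively, one can sidestep most of this by noting directly from Theorems~\ref{kdim_cycle} and~\ref{kdim_path} that the vertex construction already edge-resolves (place landmarks at the same positions; a set that resolves all vertices within truncation radius $r$ also resolves all edges within radius $r+1$, since an edge's truncated distance vector refines to its two endpoints'), giving the upper bound for free, and proving the matching lower bound by the same pigeonhole/triangle-inequality argument as in Theorem~\ref{diamd} applied to edges.
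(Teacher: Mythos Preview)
Your route differs from the paper's: the paper gives a direct, self-contained argument (an explicit alternating-interval construction for the upper bound, and for the lower bound the observation that for three consecutive landmarks $A,B,C$ the two edges incident to $B$ force at least one of the gaps $AB$, $BC$ to have at most $r+1$ edges, which together with the $2r+3$ cap on any single gap yields the average gap $\le \tfrac{3r}{2}+O(1)$). You instead try to reduce to the known formulas for $\dim_r$ via the claim $\edim_r(C_n)=\dim_r(C_n)+O(1)$. That reduction is plausible and, once established, immediately recycles Proposition~\ref{path}; it is arguably cleaner conceptually. But as written you have not proved it: you defer to ``a short case analysis analogous to Frongillo et al.'' without carrying it out, and the parenthetical justification for the upper bound (``a set that resolves all vertices within truncation radius $r$ also resolves all edges within radius $r+1$, since an edge's truncated distance vector refines to its two endpoints'\,'') is not an argument --- one still has to check that whenever two edges have equal truncated profiles, so do two of their endpoints, and this is exactly the case analysis you are skipping. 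The paper's direct proof is essentially that missing work done by hand.

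There is one concrete error in your fallback. You propose obtaining the lower bound ``by the same pigeonhole/triangle-inequality argument as in Theorem~\ref{diamd} applied to edges.'' Theorem~\ref{diamd} only yields $\thre(C_n)=\Omega(\sqrt{n})$: its proof gives $k+r\ge k+\frac{d}{2k}-O(1)$, hence a lower bound of order $\sqrt{2d}=\sqrt{n}$ for $C_n$, which is strictly weaker than the claimed $2\sqrt{2n/3}\approx 1.63\sqrt{n}$. To get the sharp constant you genuinely need the alternating-gap structure (short gap $\le r+1$, long gap $\le 2r+3$), not just a single long gap; this is precisely what the paper's three-consecutive-landmarks argument supplies and what your $\edim_r=\dim_r+O(1)$ claim would encode if proven.
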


\begin{proof}
First we will show $\paren{2\sqrt{\frac{2n}{3}}}(1+ o(1)) \leq \thre(C_n)$. Suppose that $r = k$, and consider consecutive landmarks $A,B,C$. Denote the two edges incident on either side of $B$ as $e_1$ and $e_2$. We claim that there are at most $k+1$ edges in the path $p_1$ between $A$ and $B$ or there are at most $k+1$ edges in the path $p_2$ between $B$ and $C$.

Seeking a contradiction, suppose that $|E(p_1)| > k+1$ and $|E(p_2)| > k+1$. Then $e_1$ and $e_2$ only register the landmark $B$ since $r = k$. This then implies that $e_1$ and $e_2$ have identical distance vectors, so we have a contradiction. As a result, at least one path on either side of any landmark must contain at most $k+1$ edges. Furthermore, observe that the number of edges between two consecutive landmarks $A$ and $B$ does not exceed $2k+3$, or else the two middle edges between $A$ and $B$ cannot be resolved. Thus on a cycle $C_n$ with $L$ landmarks, we must have $L\ge n/\paren{\paren{2k+3+k+1}/2} =n /\paren{2+3k/2}$, which implies that \[\frac{2n}{3} \leq \left(k+\frac{4}{3}\right)L \leq \left(\frac{k+L+\frac{4}{3}}{2}\right)^2.\]
Thus, $2\sqrt{2n/3}\paren{1\pm o(1)}\le\thre(C_n)$.

Next, we will show that $\thre(C_n) \leq \paren{2\sqrt{\frac{2n}{3}}}(1+ o(1))$. Let \[k = \cl{\sqrt{\frac{n}{6}}}.\] Suppose that $2k$ landmarks are placed at alternating intervals of length $\lceil\frac{n}{3k}\rceil$ and $\lceil\frac{2n}{3k}\rceil$, where some intervals may have shorter length. Furthermore, suppose that $r = \lceil\frac{n}{3k}\rceil$. See Figure~\ref{fig:58} for an example of this construction when $n = 25$.

If we select some edge $e$ in an interval of length at most $\lceil\frac{n}{3k}\rceil$, then the distance to the two closest landmarks from $e$ is at most $r$, so $e$ has a unique distance vector. If we select some edge $e$ in an interval of length greater than $\lceil\frac{n}{3k}\rceil$, then the distance to the closest landmark $M$ is at most $r$. There is only one other edge $e' \in E(C_n)$ with the same distance to $M$. However, $e'$ must lie in an interval of length at most $\lceil\frac{n}{3k}\rceil$, so $e$ has an unique distance vector. Thus, we have $\thre(C_n) \leq \paren{2\sqrt{\frac{2n}{3}}}(1+ o(1))$. Therefore, we have $\thre(C_n) = \paren{2 \sqrt{\frac{2n}{3}}}(1\pm o(1))$. 

By similar reasoning as above, $\thre(P_n) = 2 \sqrt{\frac{2}{3} n}(1\pm o(1))$.
\end{proof}

The next result on the minimum possible edge throttling number of a tree of order $n$ has a proof almost identical to the proof of Theorem~\ref{thm:thr-tree}, except that the lengths of the paths are bounded for edges to be resolved. The proof is omitted for brevity. 
\begin{thm}\label{thm:ethr-tree}
The minimum possible edge metric dimension throttling number of a tree of order $n$ is $\Theta(n^{1/3})$.
\end{thm}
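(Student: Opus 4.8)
The plan is to mirror the proof of Theorem~\ref{thm:thr-tree} for standard metric dimension, adjusting for the fact that edges, rather than vertices, must be resolved. The upper bound construction is essentially the same: take a path $P$ of length $\Theta(n^{2/3})$ and attach a leg of length $\Theta(n^{1/3})$ to every vertex of $P$, then place $\Theta(n^{1/3})$ landmarks along $P$ spaced $\Theta(n^{1/3})$ apart and use an $r$-sensor with $r=\Theta(n^{1/3})$. One needs to check that every edge (including the leg edges) gets a unique truncated distance vector, but this is a routine verification: within a leg, the distance to the two landmarks nearest the attachment point increases as one moves outward along the leg, and the pair of nearest landmarks identifies which segment of $P$ the leg hangs from; edges of $P$ itself are resolved the same way vertices are. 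This gives $\thre(G) = O(n^{1/3})$.

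For the lower bound, I would follow the same three-layer counting argument. Suppose $G$ is a tree of order $n$ with $k = r + \edim_r(G)$, and let $S$ be a minimum distance-$r$ edge resolving set. Form the subgraph $G'$ spanned by the paths between vertices of $S$, and let $T \subseteq V(G')$ be the branch vertices (degree $\ge 3$ in $G'$); the same incremental-construction argument gives $|T| \le |S|$. Then $G'$ decomposes into $O(k)$ maximal paths of degree-$\le 2$ vertices with no internal vertex in $S$; the key change from the vertex case is that each such path has length $O(r)$ so that its \emph{middle edges} are resolved (rather than middle vertices), which is the analogous constraint mentioned in the theorem statement's preamble. This yields $|V(G')| = O(k^2)$. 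Finally, every vertex of $G$ outside $G'$ lies on a pendant path attached to some $u \in G'$; each $u$ can support at most one such pendant path (else edges from two different pendant paths at the same distance from $u$ are unresolved), and each pendant path has length $O(r) = O(k)$, or else its edges farthest from $G'$ are indistinguishable. Hence $n = O(|V(G')| \cdot k) = O(k^3)$, so $k = \Omega(n^{1/3})$.

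The main obstacle — and the reason the proof is not literally identical — is the bookkeeping at the pendant-path and internal-path length bounds: for edges one must be careful that "two edges at equal truncated distance from every landmark" really does force a collision, which requires that the relevant path segment be long enough that no landmark in $S$ sees its far edges within radius $r$, and that adjacent pendant paths or path segments genuinely produce a clash of \emph{edge} distance vectors rather than merely vertex distance vectors. These are the points where the constant hidden in the $O(\cdot)$ shifts slightly (e.g.\ a path of $2r+2$ or $2r+3$ edges rather than $2r+1$), but the asymptotics are unaffected, which is why the authors can legitimately state that the proof is "almost identical" to that of Theorem~\ref{thm:thr-tree} and omit it. I would present the argument by pointing to Theorem~\ref{thm:thr-tree}, noting the one substantive change (length bounds for edge-resolution instead of vertex-resolution), and invoking Proposition~\ref{thm:subtree} if needed to transfer the construction to subtrees, though here only the existence of a single family of trees with small throttling number is required.
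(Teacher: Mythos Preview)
Your proposal is correct and matches the paper's approach exactly: the paper states that the proof is ``almost identical to the proof of Theorem~\ref{thm:thr-tree}, except that the lengths of the paths are bounded for edges to be resolved,'' and omits the details. You have supplied precisely those details, including the one substantive change (edge-resolution constraints on the internal and pendant path lengths) that the paper flags.
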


Next, we find the maximum possible edge metric dimension throttling number of a tree of order $n$.

\begin{prop}
The maximum possible edge metric dimension throttling number of a tree of order $n \ge 2$ is $n-2$.
\end{prop}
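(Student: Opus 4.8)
The plan is to prove the two directions separately: an upper bound $\thre(T)\le n-2$ valid for every tree $T$ of order $n\ge 2$, and then exhibit a single tree (the star) attaining it.

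For the upper bound I would work at radius $r=0$, where $\dist_0(e,v)$ is $0$ if $v$ is an endpoint of $e$ and $1$ otherwise, so that a vertex $v$ distinguishes two distinct edges $e,f$ at radius $0$ exactly when $v$ lies in the symmetric difference $e\triangle f$. Pick any leaf $u$ of $T$ with neighbor $v$ and place landmarks on $S=V(T)\setminus\{u,v\}$, so $|S|=n-2$. If some pair $e\ne f$ were unresolved by $S$, then $e\triangle f\subseteq\{u,v\}$; since $e,f$ are distinct $2$-element sets we have $|e\triangle f|\in\{2,4\}$, forcing $e\triangle f=\{u,v\}$, i.e. $e=\{a,u\}$ and $f=\{a,v\}$ for a common neighbor $a$ of $u$ and $v$. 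But $u$ is a leaf, so its only incident edge is $uv$, which forces $a=v$ and hence $f=\{v,v\}$, not an edge — a contradiction. Thus $S$ is a distance-$0$ edge resolving set and $\thre(T)\le 0+\edim_0(T)\le n-2$. (For $n=2$, $T=K_2$ has a single edge, so $\thre(K_2)=0=n-2$ by Theorem~\ref{thmjoined}.)

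For the matching lower bound I would show $\thre(K_{1,n-1})=n-2$. Let $c$ be the center and $\ell_1,\dots,\ell_{n-1}$ the leaves, so the edges are $e_i=c\ell_i$. A direct check gives $\dist(e_i,v)\le 1$ for every vertex $v$, hence truncation is vacuous and $\edim_r(K_{1,n-1})=\edim(K_{1,n-1})$ for all $r\ge 0$, which gives $\thre(K_{1,n-1})=\edim(K_{1,n-1})$. It then remains to compute $\edim(K_{1,n-1})$: for $i\ne j$ the only vertices distinguishing $e_i$ from $e_j$ are $\ell_i$ and $\ell_j$ (the center is at distance $0$ from both, and every other leaf $\ell_k$ is at distance $1$ from both), so any edge resolving set omits at most one leaf and $\edim(K_{1,n-1})\ge n-2$; conversely $\{\ell_1,\dots,\ell_{n-2}\}$ resolves every pair, since for $i<j$ we have $i\le n-2$ and $\ell_i$ is at distance $0$ from $e_i$ but $1$ from $e_j$. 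Hence $\thre(K_{1,n-1})=n-2$, which together with the upper bound proves the proposition.

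I do not anticipate a genuine obstacle; the only points requiring care are the base case $n=2$ and verifying that the forbidden configuration $e\triangle f=\{u,v\}$ truly cannot arise when $u$ is a leaf, which holds because the pendant edge is the unique edge through a leaf. If one prefers not to single out a leaf, the same argument goes through for any two vertices of $T$ with no common neighbor, and such a pair exists in every tree of order at least $3$; the leaf-and-neighbor choice is simply the cleanest instance.
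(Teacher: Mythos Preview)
Your proof is correct and follows the same strategy as the paper: use the star for the lower bound, and for the upper bound work at radius $r=0$ and exhibit a distance-$0$ edge resolving set of size $n-2$. Your choice of removed vertices (a leaf and its neighbour) is actually a bit cleaner than the paper's, which instead picks two leaves $x,y$ and splits into cases according to whether they share a neighbour; your observation that a leaf and its neighbour can never have a common third vertex adjacent to both (since that would create a triangle) handles everything at once and subsumes both of the paper's cases.
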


\begin{proof}
    First, observe that the maximum is at least $n-2$ from the star graph. To see that $n-2$ suffices, let $T$ be a tree of order $n \ge 2$. $T$ has at least two leaves, so let $x, y$ be two leaves in $T$. If $x$ and $y$ have the same common neighbor $z$, then $V(T)-\{x,z\}$ is a distance-$0$ edge resolving set for $T$. If $x$ and $y$ have different neighbors, then $V(T)-\{x,y\}$ is a distance-$0$ edge resolving set for $T$.
\end{proof}



We conclude with an infinite family of graphs whose edge metric dimension throttling number is close to the lower bound. 
Recall that by Theorem~\ref{thm:minthx}, for all graphs $G$ with $m$ edges,  $\thre(G) = \Omega\left(\frac{\log{m}}{\log{\log{m}}}\right)$.



\begin{prop}
    There exists an infinite family of connected graphs $G$ with $m$ edges for which $\thre(G) = O\left(\log{m}\right)$.
\end{prop}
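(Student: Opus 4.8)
The plan is to exhibit an explicit family of connected graphs with $m$ edges and edge metric dimension throttling number $O(\log m)$, mirroring the logic used for standard metric dimension but tracking edges rather than vertices. The natural candidate is a hypercube-like construction: consider $G = Q_d$, the $d$-dimensional hypercube, which has $n = 2^d$ vertices and $m = d \cdot 2^{d-1}$ edges, so $\log m = \Theta(d)$. I would first recall (or reprove in a line) that $Q_d$ has small edge metric dimension — indeed $\edim(Q_d) = O(d)$, since the standard coordinate-type landmark set that resolves vertices also resolves edges up to a constant factor; more carefully, an edge of $Q_d$ is determined by its pair of endpoints, which differ in exactly one coordinate, so a set of $O(d)$ landmarks chosen to have all-distinct coordinate profiles distinguishes edges by the fact that $\dist_r(v,e)$ for an edge $e=\{u,w\}$ is $\min(\dist_r(v,u),\dist_r(v,w))$, and this min over the two endpoints still separates distinct edges once $r$ is at least the diameter $d$. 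Then taking $r = d$ gives $\edim_d(Q_d) + d = O(d) + d = O(d) = O(\log m)$, which is the claimed bound.

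The key steps, in order, would be: (1) fix the family $G = Q_d$ and compute $m = d\,2^{d-1}$, hence $d = \Theta(\log m)$; (2) describe an explicit distance-$d$ edge resolving set $S$ of size $O(d)$ — e.g. the vertex $\mathbf{0}$ together with all unit vectors $e_1,\dots,e_d$, or a slightly larger $O(d)$-size set if unit vectors alone do not suffice — and verify that with $r = d$ (so truncation is vacuous, as $\mathrm{diam}(Q_d) = d$) every pair of distinct edges is separated; (3) conclude $\thre(Q_d) \le d + |S| = O(d) = O(\log m)$, and observe $Q_d$ is connected and the family is infinite. Verification in step (2) reduces to: for distinct edges $e=\{u,u'\}$ and $f=\{w,w'\}$, find a landmark $v\in S$ with $\min(\dist(v,u),\dist(v,u')) \ne \min(\dist(v,w),\dist(v,w'))$; since in $Q_d$ the distance from $e_i$ (or $\mathbf 0$) to a vertex $x$ is a simple function of the Hamming weight of $x$ (shifted in coordinate $i$), and $e$, $f$ are genuinely different subsets, a weight/coordinate argument produces the separating landmark.

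The main obstacle I anticipate is step (2): confirming that a genuinely $O(d)$-size landmark set is edge-resolving for $Q_d$, rather than just vertex-resolving. It is known that $\edim(Q_d)$ can exceed $\dim(Q_d)$, but only by a constant factor (both are $\Theta(d)$), so the bound $\edim(Q_d) = O(d)$ should hold; if pinning down the exact constant or the cleanest explicit set is awkward, the fallback is to cite the existing edge-metric-dimension literature on hypercubes (e.g. the references on $\edim$ of special families already in the bibliography) for the bound $\edim(Q_d) = O(d)$, and then simply note that $\edim_r(G) \le \edim(G)$ for $r \ge \mathrm{diam}(G)$. Either way the arithmetic $d + O(d) = O(\log m)$ closes the proof, and the only real content is that hypercubes simultaneously have logarithmic edge metric dimension and logarithmic diameter.
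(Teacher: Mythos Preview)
Your proposal is correct and follows essentially the same approach as the paper: both use the hypercube $Q_d$, noting that $m = d\,2^{d-1}$, $\mathrm{diam}(Q_d) = d$, and $\edim(Q_d) = O(d)$, to conclude $\thre(Q_d) \le d + \edim(Q_d) = O(d) = O(\log m)$. The paper simply cites the literature for the sharper bound $\edim(Q_d) = \Theta(d/\log d)$ rather than verifying an explicit landmark set (your parenthetical that $\edim(Q_d) = \Theta(d)$ is a slight misstatement---it is in fact $\Theta(d/\log d)$---but the weaker $O(d)$ you actually use is of course correct, and your proposed set $\{\mathbf{0}, e_1, \dots, e_d\}$ does work as an edge resolving set).
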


\begin{proof}
    The hypercube graph $Q_d$ has $m = d 2^{d-1}$ edges, diameter $d$, and $\edim(Q_d) = \Theta\left(\frac{d}{\log{d}}\right)$ \cite{kelenc}. 
    Thus, $\thre(Q_d) = O(d) = O(\log{m})$.
\end{proof}

\section{Mixed metric dimension throttling}\label{sec:mixed}
In this section, we give results about the mixed metric dimension throttling numbers of several families of graphs, and highlight differences between throttling for mixed metric dimension and the other subset-variants.








\begin{prop}
For all positive integers $n$,
$\thrm(K_n) = \begin{cases}
   0 & n=1\\
   n& \text{otherwise}.
\end{cases}$
\end{prop}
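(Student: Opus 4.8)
The plan is to split into the cases $n = 1$ and $n \ge 2$. For $n = 1$, the graph $K_1$ is a singleton with no edges, so $T_{\mdim}(K_1) = V(K_1) \cup E(K_1)$ is a single-element set, and Theorem~\ref{thmjoined} gives $\thrm(K_1) = 0$ directly. For $n \ge 2$, the upper bound $\thrm(K_n) \le n$ is already supplied by Theorem~\ref{thm:min_max_gen}, so the substance of the proof is the matching lower bound $\thrm(K_n) \ge n$.

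For the lower bound, I would first note that $\operatorname{diam}(K_n) = 1$, so for every $r \ge 0$, every vertex $s$, and every $X \in T_{\mdim}(K_n)$ we have $\dist(X, s) \le 1 \le r+1$ and hence $\dist_r(X, s) = \dist(X, s)$; consequently $\mdim_r(K_n) = \mdim(K_n)$ for all $r \ge 0$, and it suffices to prove $\mdim(K_n) = n$. To see that every vertex lies in every mixed resolving set, fix an edge $e = uv$ and consider the pair $\{e, u\}$ (edge versus vertex). For any vertex $s$, both $\dist(s,u)$ and $\dist(s, e) = \min(\dist(s,u), \dist(s,v))$ lie in $\{0,1\}$, and a short check shows $\dist(s,e) = \dist(s,u)$ unless $s = v$: if $s = u$ both equal $0$, and if $s \notin \{u,v\}$ both equal $1$. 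Hence the only vertex that can resolve $e$ from $u$ is $v$, so $v$ must be a landmark; applying the same argument to $\{e, v\}$ forces $u$ to be a landmark as well. Since $n \ge 2$, every vertex of $K_n$ is an endpoint of some edge, so every vertex is forced into the resolving set, giving $\mdim(K_n) \ge n$ and thus $\mdim(K_n) = n$.

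Combining the two bounds yields $\thrm(K_n) = \min_{r \ge 0}\bigl(\mdim_r(K_n) + r\bigr) = \min_{r \ge 0}(n + r) = n$ for $n \ge 2$, which together with the $n=1$ case completes the proof. There is no real obstacle here; the only points that need a little care are checking that the truncation is vacuous on a diameter-$1$ graph (so that a positive search radius can never help) and verifying the edge-versus-vertex distinguishing claim, which is essentially the observation already recorded in the discussion following Theorem~\ref{thm:min_max_gen}.
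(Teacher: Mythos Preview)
Your proof is correct and follows essentially the same approach as the paper: invoke Theorem~\ref{thm:min_max_gen} for the upper bound and use the edge-versus-vertex argument (already recorded after Theorem~\ref{thm:min_max_gen}) to get $\mdim(K_n)=n$ for the lower bound. The only difference is that you take a slightly longer route to the lower bound by first arguing that truncation is vacuous on $K_n$ so that $\mdim_r(K_n)=\mdim(K_n)$ for all $r$; the paper instead just applies the inequality $\mdim(G)\le\thrm(G)$ from Theorem~\ref{thm:min_max_gen} directly, which makes that extra step unnecessary.
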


\begin{proof}
This follows from Theorem~\ref{thm:min_max_gen}, since the upper and lower bounds are matching when $\xdim = \mdim$ and $G = K_n$ for all integers $n > 1$.
\end{proof}


For complete bipartite graphs $K_{s,t}$, the mixed metric dimension throttling number coincides with the standard metric dimension throttling number (for $s, t \ge 2$) and differs from the edge metric dimension throttling number.

\begin{prop}
    For all positive integers $s,t$, $\thrm(K_{s,t})=\begin{cases}
    s+t&\min(s,t)\le2\\
    s+t-1&\text{otherwise}.
\end{cases}$
\end{prop}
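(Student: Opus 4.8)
The plan is to compute $\mdim_r(K_{s,t})$ for each relevant $r$ and then minimize $r + \mdim_r(K_{s,t})$. Since $K_{s,t}$ has diameter $2$, every distance and every vertex-to-edge distance lies in $\{0,1,2\}$, so taking $r \ge 2$ gives no new information beyond $r = 1$, and $r=1$ already truncates nothing relevant except the (nonexistent) distances exceeding $2$. Hence $\mdim_r(K_{s,t}) = \mdim_r(K_{s,t})$ is constant for $r \ge 1$, and it equals the \emph{adjacency} mixed dimension; meanwhile $\mdim_0(K_{s,t})$ requires landmarks to sit on essentially all the objects they must distinguish. So the minimization reduces to comparing $0 + \mdim_0(K_{s,t})$ against $1 + \mdim_1(K_{s,t})$ against $2 + \mdim_{\ge 2}(K_{s,t})$, and I expect the optimum to be $r=1$ in all cases. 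First I would pin down $\mdim_1(K_{s,t})$ by a direct argument: let $A$, $B$ be the two sides with $|A| = s$, $|B| = t$. Consider the $1$-truncated distance vector of a vertex, an edge, relative to a landmark set $S$. The key observations are that for a vertex $u \in A$, its distance-$1$ profile to landmarks in $A$ distinguishes $u$ only via the single landmark equal to $u$ (all other $A$-landmarks are at truncated distance $2$ from $u$, as are all $B$-landmarks at distance $1$), so to separate two landmark-free vertices of $A$ we need information that forces all but one vertex of $A$ to carry a landmark — and symmetrically for $B$; additionally an edge $uv$ ($u \in A$, $v \in B$) has truncated distance $0$ to $u$ and $v$ and $1$ to everything else, so an edge is only distinguished from the vertex $u$ by $v$ itself (or from $v$ by $u$).

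Carrying this out, I would argue the lower bound $\mdim_1(K_{s,t}) \ge s + t - 2$ by the same mechanism used for $\edim$ and $\dim$ in Theorem~\ref{thm:min_max}: at most one vertex of $A$ and at most one vertex of $B$ can be landmark-free, else two such vertices (or an edge and one of its endpoints) are unresolved. Then I would check \textbf{whether} $s + t - 2$ landmarks actually suffice at radius $1$, i.e. whether putting landmarks on all of $A$ and $B$ except one vertex $a_0 \in A$ and one vertex $b_0 \in B$ is a distance-$1$ mixed resolving set. Here the case split on $\min(s,t)$ enters: when both $s, t \ge 3$ this configuration works — every vertex except $a_0, b_0$ is its own landmark, $a_0$ and $b_0$ are distinguished from each other (one is at distance $1$ from the other's side's landmarks in a way the other is not... more carefully, $a_0$ has truncated distance $2$ to all landmarks in $A$ and $1$ to all in $B$, while $b_0$ has $1$ to all in $A$ and $2$ to all in $B$, and since $s-1 \ge 1$ and $t - 1 \ge 1$ these profiles differ), and every edge $uv$ is distinguished from every vertex and every other edge because at least one endpoint carries a landmark unless the edge is exactly $a_0 b_0$, which is still separated from $a_0$ and $b_0$ by any landmark (its profile is all $1$'s, whereas $a_0$ has a $2$). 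This gives $\mdim_1 = s + t - 2$ and hence $\thrm(K_{s,t}) \le s+t-1$ when $\min(s,t) \ge 3$; combined with $\thrm \ge \mdim = s+t-1$ (the known mixed dimension of $K_{s,t}$, or a direct argument), we get equality.

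When $\min(s,t) \le 2$, the removed-two-vertices configuration \textbf{fails} at radius $1$ — the main obstacle, and the reason the answer jumps to $s+t$. The hard part is identifying exactly why: if say $s = 1$, then $K_{1,t}$ is a star, $a_0$ is forced to be the center (the only non-leaf), and then the edges $a_0 b$ all have truncated distance $0$ to their leaf endpoint and $1$ to the center, so the edge $a_0 b_0$ and the vertex $b_0$ have distance $0$ to no common landmark and are indistinguishable once $b_0$ is landmark-free; similarly if $s = 2$, writing $A = \{a_1, a_2\}$, leaving $a_0 = a_2$ free means the edge $a_2 b$ is at truncated distance $0$ only from $b$ and the edge $a_1 b$ is at distance $0$ only from $a_1, b$, and with $b_0$ also free one finds a pair — e.g. the edge $a_2 b_0$ and the vertex $b_0$ — that nothing separates. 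So in these cases $\mdim_1(K_{s,t}) = s + t - 1$, forcing $\thrm \ge s + t$ at radius $\ge 1$; and $\mdim_0 \ge s+t$ trivially makes radius $0$ no better, while the upper bound $\thrm(K_{s,t}) \le s + t$ comes from $\mdim_1 \le s+t-1$ (landmarks everywhere but one vertex, chosen as a leaf) or directly from Theorem~\ref{thm:min_max_gen}. I would organize the write-up as: (1) reduce to $r \in \{0,1\}$ via diameter $2$; (2) lower bound $\mdim_1 \ge s+t-2$ always; (3) upper bound $\mdim_1 \le s+t-2$ when $\min(s,t)\ge 3$ by exhibiting the configuration; (4) show $\mdim_1 = s+t-1$ when $\min(s,t) \le 2$ by the obstruction above together with $\mdim_1 \le s+t-1$; (5) assemble the throttling numbers. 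The genuinely delicate step is (4) — correctly enumerating the small-side cases and the offending unresolved pair — and, in (3), the verification that the $a_0 b_0$ edge and the two free vertices are mutually resolved, which is where $\min(s,t) \ge 3$ is actually used.
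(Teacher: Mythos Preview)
Your overall route is sound but differs from the paper's: the paper simply cites the known formula $\mdim(K_{s,t})=s+t-2$ when $\min(s,t)\ge 3$ and $\mdim(K_{s,t})=s+t-1$ when $\min(s,t)\le 2$, then combines $r+\mdim_r\ge 1+\mdim$ for $r\ge 1$ with a short argument that $\mdim_0(K_{s,t})=s+t$. You instead re-derive $\mdim_1$ from scratch, which is more self-contained but more work. Both are legitimate, and since $K_{s,t}$ has diameter $2$ your $\mdim_1$ equals $\mdim$, so you are essentially reproving the cited formula.

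There are two concrete slips to fix. First, in the $\min(s,t)\ge 3$ case you invoke ``$\thrm\ge\mdim=s+t-1$'' for the lower bound, but you have just shown $\mdim_1=s+t-2$, and since $\mdim_1=\mdim$ here, $\mdim(K_{s,t})=s+t-2$, not $s+t-1$. The lower bound you need is $r+\mdim_r\ge 1+(s+t-2)=s+t-1$ for $r\ge 1$, together with the $r=0$ case (which you never address for $\min(s,t)\ge 3$): one must check $\mdim_0\ge s+t-1$, and in fact $\mdim_0=s+t$ always, since leaving any single vertex $v$ landmark-free leaves the edge $uv$ and the vertex $u$ unresolved at radius $0$.

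Second, your obstruction for $s=2$ is misidentified. With $A=\{a_1,a_2\}$, $a_2$ and $b_0$ landmark-free, the pair you name --- edge $a_2b_0$ versus vertex $b_0$ --- is in fact resolved by any landmark $b_j$ (the edge has distance $1$ to $b_j$, the vertex $b_0$ has distance $2$), so this fails whenever $t\ge 2$. The genuine obstruction is the vertex $a_1$ versus the edge $a_1b_0$: both have distance $0$ to $a_1$ and distance $1$ to every landmark $b_j$, hence identical profiles. With this correction (and the analogous one for $s=1$), your step (4) goes through.
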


\begin{proof}
Since mixed resolving sets must resolve both vertices and edges, while standard resolving sets must only resolve vertices, we have $\thrm(K_{s,t}) \ge \thr(K_{s,t}) = s+t-1$.
    The upper bound $s+t$ corresponds to placing a landmark on every vertex. We split the rest of the proof into three cases. For the first case, suppose that $s > 2$ and $t > 2$. Note that $\mdim(K_{s,t}) = s+t-2$, and $K_{s,t}$ has diameter $2$, so $\thrm(K_{s,t}) \le (s+t-2)+1 = s+t-1$.

    For the second case, suppose that $s \le 2$ or $t \le 2$, but $(s, t) \neq (1,1)$. First note that $\mdim(K_{s,t}) = s+t-1$. We show that there does not exist a mixed $0$-resolving set of size $s+t-1$, and therefore $\thrm(K_{s,t})\ge s+t$. Let $S=V(G)-\{v\}$. Consider any edge $e = \{u,v\}$. For any vertex $x\notin\{u,v\}$, we have $\dist(u, x) > 0$ and $\dist(e, x) > 0$, so $x$ does not $0$-resolve $e$ and $u$. Moreover, $\dist(u, u) = \dist(e, u) = 0$, so $u$ does not resolve $e$ and $u$. Thus, $S$ is not a mixed $0$-resolving set of $K_{s,t}$.

    For the third case, suppose that $s = t = 1$. In this case, $\thrm(K_{s,t}) = \thrm(K_2) = 2 = s+t$.
\end{proof}

As with standard and edge metric dimension throttling, we obtain $\Theta(\sqrt{n})$ bounds for mixed metric dimension throttling on paths and cycles. Note that the leading coefficient is $2$ for mixed metric dimension throttling, whereas it was $2 \sqrt{2/3}$ for standard and edge metric dimension throttling.

\begin{thm}
    For all integers $n\geq 3$, 
    \begin{itemize}
    \item $\thrm(C_n) = 2\sqrt{n}(1\pm o(1))$, \item $\thrm(P_n) = 2\sqrt{n}(1\pm o(1))$.
    \end{itemize}
\end{thm}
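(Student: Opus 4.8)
The plan is to establish matching upper and lower bounds of $2\sqrt{n}(1\pm o(1))$ for both $\thrm(C_n)$ and $\thrm(P_n)$, following the template of the edge and standard metric dimension proofs but tracking the extra constraints imposed by mixed resolving sets. The key structural fact to exploit is that a mixed resolving set must simultaneously resolve all vertices and all edges, and — crucially, as noted in the excerpt — on a path each endpoint must carry a landmark, and more generally any ``long'' run of consecutive landmark-free vertices causes trouble. I would first recall the behavior of $\mdim_r$ on a path segment: if a segment of $C_n$ or $P_n$ between two consecutive landmarks $A$ and $B$ has too many edges relative to $r$, then the interior edges and the interior vertices of that segment all collapse to the same truncated distance vector. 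Concretely, between consecutive landmarks at distance $2r+2$ or more, the two central objects are unresolved; but unlike the edge case, I expect the binding constraint to be that \emph{on both sides} of every landmark the segment length cannot be large, since a vertex adjacent to a landmark $B$ must be resolved from the edge incident to $B$ on the same side, which forces the other landmark seen from that edge to be within range. This should yield that each inter-landmark gap has length $O(r)$ and, combining the ``one side short'' phenomenon over three consecutive landmarks, an effective average gap length of roughly $2r$ (rather than $3r/2$ as in the edge case), giving $L \geq n/(2r) (1 - o(1))$ and hence $k + r \geq r + n/(2r)(1-o(1)) \geq 2\sqrt{n}(1 - o(1))$ by the AM–GM inequality.

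For the upper bound I would place $L \approx \sqrt{n}$ landmarks at equal intervals of length $\approx \sqrt{n}$ around $C_n$, take $r = \lceil \sqrt{n}\rceil$, and verify directly that every vertex and every edge gets a unique truncated distance vector. The verification: a vertex or edge in a given segment sees the two bordering landmarks at distance $\le r$, and within a single segment of length $\le r$ the distance to one fixed bordering landmark already strictly orders all vertices and all edges of that segment; distinct segments are separated by the identity of which landmarks are at distance $\le r$. This gives $\thrm(C_n) \le \sqrt{n} + \lceil\sqrt{n}\rceil + O(1) = 2\sqrt{n}(1+o(1))$. For $P_n$, I would start from the $C_n$ configuration, delete an edge, and (as in the edge metric dimension proof) add landmarks at the two new endpoints if they are not already present — at most $2$ extra landmarks — and observe that a distance-$r$ mixed resolving set of $C_n$ restricts to one of $P_n$ after this patch, so $\thrm(P_n) \le \thrm(C_n) + 2 = 2\sqrt{n}(1+o(1))$. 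The lower bound for $P_n$ is analogous to the $C_n$ argument, with the endpoints of the path playing the role of ``forced landmarks'' and a slightly more careful accounting of the two end segments, which only affects lower-order terms.

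The main obstacle I anticipate is pinning down the exact leading constant — showing the effective gap is $2r$ rather than $\tfrac{3}{2}r$. In the edge metric dimension case, one side of each landmark is allowed to be as long as $\approx 2r$ and the other only $\approx r$, averaging to $\tfrac32 r$; I need to argue that for mixed resolving this saving is unavailable, i.e. that a gap of length noticeably more than $2r$ is \emph{always} fatal and moreover that one cannot alternate short/long gaps to beat the $2r$ average. The reason should be that mixed resolution of a vertex $v$ from the incident edges on both of its sides forces, for \emph{every} internal vertex, that $v$ lies within distance $r$ of a landmark \emph{and} that the nearest landmark on at least one side is close enough to break the vertex/edge tie there — this is a stronger local condition than in the pure-edge case and removes the asymmetry. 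I would make this precise by a direct case analysis on a single gap: parametrize the gap length, identify the vertex/edge pair that collapses, and read off the threshold; then the $2\sqrt{n}$ constant follows. Everything else is routine pigeonhole plus AM–GM, exactly paralleling Proposition~\ref{path} and the edge metric dimension theorem.
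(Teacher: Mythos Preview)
Your upper bound construction is fine and matches the paper's. The lower bound, however, has a real error.

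You claim an ``effective average gap length of roughly $2r$,'' giving $L \ge n/(2r)(1-o(1))$, and then assert that AM--GM yields $r + L \ge 2\sqrt{n}(1-o(1))$. But $r + n/(2r) \ge 2\sqrt{n/2} = \sqrt{2n}$, not $2\sqrt{n}$; your arithmetic is off, and the bound you actually derived is too weak by a factor of $\sqrt{2}$. Moreover, the direction of your heuristic is backwards: to get a \emph{larger} leading constant than in the edge case ($2$ versus $2\sqrt{2/3}$), you need the gaps to be \emph{shorter} than $3r/2$, not longer.

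The paper's lower bound is much simpler and sharper than the three-consecutive-landmark averaging you propose. The key observation is that \emph{every} gap between consecutive landmarks $A$ and $B$ is at most $r+1$. Indeed, the landmark vertex $A$ and the edge $e$ incident to $A$ on the $B$-side both have distance $0$ to $A$ and identical distances to every landmark on the far side of $A$; the only landmark that can separate them is $B$, and $\dist_r(e,B) = \dist_r(A,B)$ unless the gap is at most $r+1$. This immediately gives $L \ge n/(r+1)$, hence $r + L \ge r + n/(r+1) \ge 2\sqrt{n}(1-o(1))$. No alternation analysis or ``one side short'' phenomenon is needed; the vertex--edge pair sitting right at a landmark is the binding constraint, and it bounds each gap individually.
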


\begin{proof}
    For the upper bound, we can use $\lceil \sqrt{n} \rceil$ landmarks spaced at intervals of approximately equal length. With a sensor that sees out to a distance of $\lceil \sqrt{n} \rceil$, we have $\thrm(C_n) \le 2\sqrt{n}(1\pm o(1))$. See Figure~\ref{fig:68} for an example of this construction when $n = 25$.

    For the lower bound, suppose that the sensor can see out to distance $r$. Then the distance between any two consecutive vertices $A$ and $B$ on the cycle with landmarks must be at most $r+1$, or else the vertex $A$ and the edge between $A$ and $B$  incident to $A$ would not be distinguished from each other. 

    Thus, we have at least $n/(r+1)$ landmarks, so the mixed metric dimension throttling number of $C_n$ is at least \[r+\frac{n}{r+1} \ge 2\sqrt{n}(1\pm o(1)).\]
    By similar reasoning as above, $\thrm(P_n) = 2\sqrt{n}(1\pm o(1))$.
\end{proof}

\begin{figure}[h]
    \centering
    \includegraphics[width=0.3\textwidth]{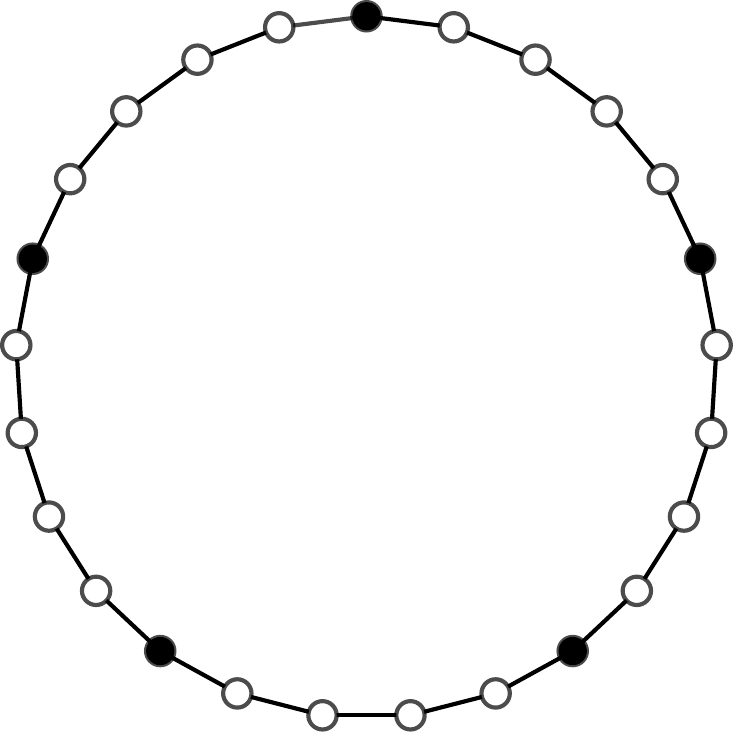}
    \caption{An optimal mixed metric dimension throttling configuration for $C_n$ with $n=25$, $r=5$, and the landmarks are at distance $5$.}
    \label{fig:68}
\end{figure}

We showed that the minimum possible standard metric dimension throttling number of a tree of order $n$ is $\Theta(n^{1/3})$, and we also obtained the same bound for edge metric dimension throttling. For mixed metric dimension on trees of order $n$, we prove that the minimum possible throttling number is $\Theta(n^{1/2})$. 

\begin{thm}\label{thm:xthr-tree}
The minimum possible mixed metric dimension throttling number of a tree of order $n$ is $\Theta(n^{1/2})$.
\end{thm}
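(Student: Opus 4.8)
The plan is to prove the two bounds separately. The upper bound needs nothing new: $P_n$ is a tree of order $n$ with $\thrm(P_n)=2\sqrt{n}(1\pm o(1))=O(n^{1/2})$, so the minimum over all trees of order $n$ is $O(n^{1/2})$. The substance is the matching lower bound $\thrm(T)=\Omega(n^{1/2})$ for \emph{every} tree $T$ of order $n$; the point is that the $\Theta(n^{1/3})$ construction of Theorem~\ref{thm:thr-tree} (a long path with short legs attached, landmarks placed only on the path) fails completely here.

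The key structural fact I would establish first is that every leaf of $T$ must belong to every distance-$r$ mixed resolving set $S$. If $v$ is a leaf with neighbor $u$ and $e=\{u,v\}$, then for every $w\ne v$ the unique path from $w$ to $v$ passes through $u$, so $\dist(w,v)=\dist(w,u)+1$ and hence $\dist(w,e)=\min(\dist(w,u),\dist(w,v))=\dist(w,u)$; truncating preserves this equality, so only $v$ can distinguish the vertex $u$ from the edge $e$, forcing $v\in S$. Consequently, if $T'$ denotes the Steiner tree of $S$ in $T$ (the minimal subtree containing $S$), then every leaf of $T$ lies in $S\subseteq V(T')$, and since any vertex of $T$ outside $T'$ would lie in a pendant subtree of $T'$ containing a leaf of $T$, this forces $T=T'$. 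Thus no vertex of $T$ can be ``hidden'' deep inside a leg away from all landmarks, which is exactly what the $n^{1/3}$ construction relied on.

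Next I would bound $|V(T')|=n$ in terms of $|S|$ and $r$, paralleling the tree lower bound in Theorem~\ref{thm:thr-tree}. Fix the optimal $r$ and a distance-$r$ mixed resolving set $S$ of size $\mdim_r(T)$. Let $S'$ be $S$ together with all branch vertices of $T'$; as in that proof $|S'|\le 2|S|$, and $T'$ decomposes into the $|S'|-1$ internally-$S'$-free paths joining consecutive vertices of $S'$. At most one vertex of $T$ can be at distance more than $r$ from all of $S$, since two such vertices would have the same truncated distance vector $(r+1,\dots,r+1)$ and be indistinguishable. Because every landmark is reached from an internal vertex of one of these paths only through one of the path's two endpoints, a path of length at least $2r+3$ would contain two internal vertices each at distance at least $r+1$ from every landmark, a contradiction; hence every path has length at most $2r+2$. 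Therefore $n=|V(T')|\le(|S'|-1)(2r+2)+1\le 4|S|(r+1)$, so $|S|\ge n/(4(r+1))$, and by the arithmetic--geometric mean inequality $\thrm(T)=r+|S|\ge(r+1)+\frac{n}{4(r+1)}-1\ge\sqrt{n}-1$. Together with the path upper bound this yields $\thrm(T)=\Theta(n^{1/2})$ for the minimum.

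I expect the main obstacle to be bookkeeping rather than any new idea: carefully justifying the reduction $T=T'$ (including the trivial cases $|S|\le 1$, which force $n\le 1$), and making the path-length bound airtight via the ``at most one far vertex'' observation and the fact that a subtree of a tree is distance-preserving. The leaf fact is what does the real work, and once it is available the remaining estimates are close to those already carried out for standard and edge metric dimension throttling on trees.
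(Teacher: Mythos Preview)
Your proposal is correct and follows essentially the same approach as the paper: upper bound via $P_n$, lower bound via the observation that every leaf of $T$ must lie in any distance-$r$ mixed resolving set (the paper cites \cite{mmd} and the fact that a distance-$r$ mixed resolving set is in particular a mixed resolving set, whereas you prove the leaf fact directly), forcing the Steiner tree $T'$ of $S$ to equal $T$, and then invoking the $O(k^2)$ bound on $|V(T')|$ from the proof of Theorem~\ref{thm:thr-tree}. Your write-up simply unpacks that last step in more detail than the paper does.
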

\begin{proof}
The proof is very similar to the proof of Theorem~\ref{thm:thr-tree}. For the upper bound, use $P_n$. For the lower bound, suppose that $G$ is a tree of order $n$, $k=r+\mdim_r(G)$ for some integer $r\in[0,k]$, and $S$ is a distance-$r$ mixed resolving set for $G$ of size $\mdim_r(G)$. Consider the subgraph $G'$ of $G$ formed by all the paths between the vertices in $S$. Every vertex of degree $1$ in $G$ must be in every mixed resolving set for $G$ \cite{mmd}, and every distance-$r$ mixed resolving set for $G$ is also a mixed resolving set for $G$, so we have $G = G'$, which implies that $n = |V(G')|=O(k^2)$ and $k=\Omega(n^{1/2})$. 
\end{proof}



We conclude by showing that as with edge metric dimension throttling, the family of hypercube graphs have mixed metric dimension throttling number close to the lower bound.
Recall that by Theorem~\ref{thm:minthx}, for all graphs $G$ of order $n$ with $m$ edges, $\thrm(G) = \Omega\left(\frac{\log{(m+n)}}{\log{\log{(m+n)}}}\right)$.

\begin{prop}
    There exists an infinite family of connected graphs $G$ for which $\thrm(G) = O\left(\log{(m+n)}\right)$, where $G$ has order $n$ and $m = \Theta(n \log n)$ edges.
\end{prop}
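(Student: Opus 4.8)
The plan is to use the hypercube family $Q_d = P_2 \times \cdots \times P_2$ (with $d$ factors) for $d \ge 1$, just as in the edge metric dimension case. Each $Q_d$ is connected, has order $n = 2^d$, has $m = d\,2^{d-1}$ edges, and has diameter $d$; hence $m + n = 2^{d-1}(d+2) = \Theta(d\,2^d) = \Theta(n \log n)$ and $\log(m+n) = \Theta(d)$. So it suffices to show $\thrm(Q_d) = O(d)$. Since $\thrm(Q_d) \le \mdim_r(Q_d) + r$ for every positive integer $r$, and taking $r = d$ makes truncation vacuous (every distance between a vertex and a vertex or edge of $Q_d$ is at most $d < d+1$, so $\mdim_d(Q_d) = \mdim(Q_d)$), we get $\thrm(Q_d) \le \mdim(Q_d) + d$. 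It therefore remains to prove $\mdim(Q_d) = O(d)$; I will in fact show $\mdim(Q_d) \le d+1$.

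Identify $V(Q_d)$ with $\{0,1\}^d$ under Hamming distance, and take $S = \{\mathbf 0\} \cup \{\mathbf e_1, \dots, \mathbf e_d\}$, where $\mathbf 0$ is the all-zeros string and $\mathbf e_i$ is the string with a single $1$ in coordinate $i$; thus $|S| = d+1$. For a vertex $v$ of Hamming weight $|v|$ one has $\dist(\mathbf 0, v) = |v|$ and $\dist(\mathbf e_i, v) = |v| + 1 - 2v_i$, so the distance vector of $v$ with respect to $S$ determines $|v|$ and then every bit $v_i$; hence distinct vertices get distinct distance vectors. For an edge $e$, write its endpoints as $a$ and $a \oplus \mathbf e_j$ with $a_j = 0$, so that $a$ is the lower-weight endpoint; then $\dist(u, e) = \dist(u, a) - u_j$ for every vertex $u$, since the minimum in the definition of $\dist(\cdot, e)$ is attained at $a$ exactly when $u_j = 0$. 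Consequently $\dist(\mathbf 0, e) = |a|$, while $\dist(\mathbf e_i, e) = |a| + 1 - 2a_i$ for $i \ne j$ and $\dist(\mathbf e_j, e) = |a|$. Thus $j$ is the unique coordinate in $\{1,\dots,d\}$ whose landmark distance equals $\dist(\mathbf 0, e)$ (for $i \ne j$ the value $|a| + 1 - 2a_i \in \{|a|-1, |a|+1\}$ is never $|a|$), and once $j$ is known the remaining coordinates recover $a$; hence distinct edges get distinct distance vectors. Finally, $S$ separates vertices from edges: in the distance vector of a vertex $v$ no coordinate $\dist(\mathbf e_i, v) = |v| + 1 - 2v_i$ equals the first coordinate $|v|$, whereas the distance vector of an edge has its $\mathbf e_j$-entry equal to its first entry. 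Therefore $S$ is a mixed resolving set, $\mdim(Q_d) \le d+1$, and $\thrm(Q_d) \le (d+1) + d = O(d) = O(\log(m+n))$.

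The routine but error-prone part is the edge bookkeeping: one must fix the convention that $a$ is the lower-weight endpoint of $e$, derive $\dist(u,e) = \dist(u,a) - u_j$ carefully, and then check that the recovery of the pair $(a,j)$ from the distance vector is unambiguous and that no edge vector coincides with a vertex vector. Everything else is immediate. (If a reference giving $\mdim(Q_d) = O(d)$, or the sharper $\mdim(Q_d) = \Theta(d/\log d)$, is available, it may be cited in place of the explicit construction above.)
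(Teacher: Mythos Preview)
Your proof is correct and follows essentially the same approach as the paper: both use the hypercube family $Q_d$ and bound $\thrm(Q_d) \le \mdim(Q_d) + d = O(d)$. The only difference is that the paper cites the known result $\mdim(Q_d) = \Theta(d/\log d)$, whereas you give a self-contained construction showing $\mdim(Q_d) \le d+1$; your explicit argument is a nice addition and, as you note, could be replaced by the citation.
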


\begin{proof}
    The hypercube graph $Q_d$ has order $n = 2^d$, $m = d 2^{d-1}$ edges, diameter $d$, and $\mdim(Q_d) = \Theta\left(\frac{d}{\log{d}}\right)$ \cite{mmd}. Thus, $\thrm(Q_d) = O(d) = O\paren{\log(m+n)}$.
\end{proof}

\section{Conclusion}\label{sec:conclusion}
In this paper, we introduced and studied throttling for metric dimension and its variants. Table 1 summarizes our main results. Below we highlight several of them in more detail and discuss directions for future work.

We proved the NP-hardness of standard, edge, and mixed metric dimension throttling. Our proofs of NP-hardness use disconnected graphs; it is a problem for future work to obtain alternate NP-hardness proofs when the problems are restricted to connected graphs.

We proved that the minimum possible metric dimension throttling number of any graph of order $n$ is $\Theta\left(\frac{\log{n}}{\log{\log{n}}}\right)$. For edge metric dimension, we showed that for all graphs $G$ with $m$ edges, $\thre(G) = \Omega\left(\frac{\log{m}}{\log{\log{m}}}\right)$. Moreover, we found an infinite family of connected graphs $G$ with $m$ edges for which $\thre(G) = O\left(\log{m}\right)$. There is a $\log{\log{m}}$ gap between these bounds. There is likewise a gap between our upper and lower bounds for the minimum possible mixed metric dimension throttling number. It remains an open problem to close these gaps and find the coefficients of the leading terms. 

For standard and edge metric dimension, we showed that the minimum possible throttling number of any tree of order $n$ is $\Theta\left(n^{1/3}\right)$, while for mixed metric dimension it is $\Theta\left(n^{1/2}\right)$. For paths and cycles, we found sharp bounds up to the leading term for standard, edge, and mixed metric dimension throttling. It is an open problem to find the exact values of the throttling numbers for these graphs.

\begin{center}
    
\begin{talltblr}[
  caption={Summary of main results}
]
{|p{28mm,m}|p{32mm,m}|p{32mm,m}|p{48mm,m}|}
\hline
     & $\thr$ & $\thre$ & $\thrm$ \\
\hline
Complexity & NP-Complete & NP-Complete & NP-Complete \\
\hline
$\min$ on graphs with $n$ vertices&
$\Theta\left(\frac{\log n}{\log \log n}\right)$& 0
& $\Omega\left(\frac{\log n}{\log \log n}\right)$ \\
\hline
$\min$ on graphs with $m$ edges & $\Theta\left(\frac{\log m}{\log \log m}\right)$
&
$\Omega\left(\frac{\log m}{\log \log m}\right)$ $O(\log m)$ & $\Omega\left(\frac{\log m}{\log \log m}\right)$\\
\hline
$\min$ on graphs with $n$ vertices and $m$ edges &
$\Omega\left(\frac{\log n}{\log \log n}\right)$ &
$\Omega\left(\frac{\log m}{\log \log m}\right)$ & $\Omega\left(\frac{\log (m+n)}{\log \log (m+n)}\right)$ \\
\hline
$\max$ on graphs of order $n$ & $n-1$ & $\begin{cases}
    0 & n=1,2\\
    n-1 & n>2
\end{cases}$ & $\begin{cases}
    0 & n=1\\
    n & n>1
\end{cases}$ \\
\hline
$\max$ on trees of order $n$ &
$n-1$ & 
$\begin{cases}
    0 & n=1\\
    n-2 & n>1
\end{cases}$ &
$\begin{cases}
    0 & n=1\\
    n & n>1
\end{cases}$
\\
\hline
$\min$ on trees of order $n$ & $\Theta(n^{1/3})$ & $\Theta(n^{1/3})$ & $\Theta(n^{1/2})$\\
\hline
$K_n$ & $n-1$ & $\begin{cases}
   0 & n=1,2\\
   n-1& n>2
\end{cases}$ & $\begin{cases}
    0 & n=1\\
    n & n > 1
\end{cases}$\\
\hline
subtree-monotone & yes & yes & yes \\
\hline
$K_{s,t}$ & $s+t-1$ & $s+t-2$ & $\begin{cases}
    s+t&\min(s,t)\le2\\
    s+t-1&\text{otherwise}
\end{cases}$\\
\hline
$P_n$,\, $C_n$ & $2\sqrt{\frac{2}{3}n}\left(1\pm o(1)\right)$ & $2\sqrt{\frac{2}{3}n}\left(1\pm o(1)\right)$&
$2\sqrt{n}\left(1\pm o(1)\right)$\\
\hline
\end{talltblr}
\end{center}

{ \footnotesize
}

\end{document}